\newtheorem{thm}{Theorem}[section]
\newtheorem{lem}[thm]{Lemma}
\newtheorem{prop}[thm]{Proposition}
\newtheorem{defn}[thm]{Definition}
\numberwithin{equation}{section}
\begin{document}

	\title[Large data global well-posedness for Dirac equations]{Conditional large-data global well-posedness of Dirac equation with Hartree-type nonlinearity}
	
	\author[Y. Cho]{Yonggeun Cho}
	\address{ Department of Mathematics, and Institute of Pure and Applied Mathematics, Jeonbuk National University, Jeonju 54896, Republic of Korea}
	\email{changocho@jbnu.ac.kr}

    \author[S. Hong]{Seokchang Hong}
    \address{Department of Mathematics, Chung-Ang University, Seoul 06974, Republic of Korea}
    \email{seokchangh11@cau.ac.kr}

 \author[K. Lee]{Kiyeon Lee}
\address{Stochastic Analysis and Application Research Center(SAARC), Korea Advanced Institute
	of Science and Technology, 291 Daehak-ro, Yuseong-gu, Daejeon, 34141, Republic of
	Korea}
\email{kiyeonlee@kaist.ac.kr}

	\thanks{2020 {\it Mathematics Subject Classification.} 35Q41, 35Q40.}
	\thanks{{\it Key words and phrases.} Cubic Hartree-type Dirac equation, global well-posedness, scattering, bounded dispersive norm, Majorana condition }
	
	\begin{abstract}
		We study the Cauchy problems for the Hartree-type nonlinear Dirac equations with Yukawa-type potential in two and three spatial dimensions. This paper improves our previous results \cite{chohlee,cholee}; we establish global well-posedness and scattering for large data with a certain condition. Firstly we investigate the long-time behavior of solutions to the Dirac equation satisfies good control provided that a particular dispersive norm of solutions is bounded. The key of our proof relies on modifying multilinear estimates obtained in our previous papers. Secondly, we obtain large data scattering by exploiting the Majorana condition.
	\end{abstract}

		\maketitle

\section{Introduction}

We consider the nonlinear Dirac equation in $\mathbb R^{1+d}$, $d=2,3$:
\begin{align}\label{main-eq}
\left\{
\begin{array}{l}
-i\gamma^\mu\partial_\mu\psi+M\psi = [V_b*(\psi^\dagger \gamma^0\psi)] \psi \\
\psi|_{t=0}=\psi_0,
\end{array}\right.
\end{align}
where $\psi^\dagger$ is the complex conjugate transpose of $\psi$, i.e., $\psi^\dagger = (\psi^T)^*$. The $V_b:=V_b(x)$ is the spatial potential, given by
\begin{align}
V_b(x) = \frac1{4\pi}\frac{e^{-b|x|}}{|x|},\quad x\in\mathbb R^3,
\end{align}
and
\begin{align}
V_b(x) = \int_0^\infty e^{-b^2r-\frac{|x|^2}{4r}}\,\frac{dr}{r} \approx \begin{cases}
 	e^{-b|x|}|bx|^{-\frac12},\quad |x|\gtrsim1, \\
 	-\log|x|,\quad |x|\ll1,
 \end{cases}	x\in\mathbb R^2,
\end{align}
for $b>0$. The unknown spinor field $\psi:\mathbb R^{1+d}\rightarrow \mathbb C^{\tilde d}$ with $(d,\tilde d)=(2,2)$ and $(d,\tilde d)=(3,4)$ is the Dirac field. The matrices $\gamma^\mu$ are the anti-Hermitian $\tilde d\times \tilde d$ Dirac gamma matrices defined as follows: if $\tilde d = 4$, then
$$
\gamma^0 = \begin{bmatrix} \mathbb I_2 & \mathsf 0 \\ \mathsf 0 & -\mathbb I_2  \end{bmatrix},\quad \gamma^j = \begin{bmatrix} \mathsf0 & \sigma^j \\ -\sigma^j & \mathsf0 \end{bmatrix}
$$
where $\mathbb I_2$ is the $2\times2$ identity matrix and $\mathsf 0$ is the zero matrix with obvious dimensions and $\mathsf\sigma^j$ is the Pauli matrices given by
$$
\sigma^1 = \begin{bmatrix} 0 & \;\;1 \\ 1 & \;\;0 \end{bmatrix}, \quad \sigma^2 = \begin{bmatrix} 0 & -i \\ i & \;\;0 \end{bmatrix},\quad \sigma^3 = \begin{bmatrix} 1 &\;\; 0 \\ 0 & -1 \end{bmatrix}
$$
and if $\tilde d = 2$, then $\gamma^0 = \sigma^3$, $\gamma^1 = i\sigma^2$, and $\gamma^2 = -i\sigma^1$.

 The positive constant $M>0$ denotes the mass. We use the summation convention with respect to the Greek indices $\mu=0,1,\dots,d$ and Roman indices $j=1,\cdots,d$. For example, $-i\gamma^\mu\partial_\mu$ means $-i\partial_t+\gamma^j\partial_j$. In practice it is convenient to adapt the $\alpha,\beta$ notation. To be precise let $\alpha^\mu = \gamma^0\gamma^\mu$ and $\beta = \gamma^0$ and multiply our equation \eqref{main-eq} by $\gamma^0$ on the left. Then \eqref{main-eq} is rewritten as
\begin{align}\label{dirac}
\left\{
\begin{array}{l}
(-i\alpha^\mu \partial_\mu + M\beta)\psi = [V_b*(\psi^\dagger\beta\psi)] \beta\psi,\\
\psi|_{t=0}=\psi_0.
\end{array}\right.
\end{align}
For simplicity we always assume that $M = 1$ throughout the paper.

In the study of dispersive property for the Dirac equations, it is accessible to decompose the Dirac spinor field $\psi$ into half waves, i.e., $\psi_+$ and $\psi_-$.
To do this we define projection operators $\Pi_{\theta}$ for $\theta \in \{+, -\}$
by
\[
\Pi_{\theta} :=\frac{1}{2}\left(\mathbb I + \theta \Lambda^{-1}\Big[\alpha^x \cdot (-i\nabla) + \beta\Big]\right),
\]
where $\mathbb I$ is the $\tilde d\times \tilde d$  identity matrix, $\Lambda = \Lambda( -i\nabla ) := \mathcal F^{-1}(\Lambda( \xi ))$, $\Lambda( \xi ) = \sqrt{1 + |\xi|^2}$, and $\alpha^x \cdot (-i\nabla) = \alpha^j(-i\partial_j)$.
 Then $$\Pi_\theta + \Pi_{-\theta} = \mathbb I,\;\; \Pi_\theta \Pi_{-\theta} = \mathsf 0, \;\;\Pi_\theta^2 = \Pi_\theta,$$
and
\[
\Lambda \left(\Pi_{+} - \Pi_- \right) = \alpha^x \cdot(-i\nabla) + \beta 
\] 
  We denote the symbol of $\Pi_\theta$ by $\Pi_\theta(\xi)$. Let $\psi_\theta = \Pi_\theta \psi$. Then we have the following half-Klein-Gordon equation from the equation \eqref{dirac}:
\begin{align}\label{h-w}
\left\{
\begin{array}{l}
(-i\partial_t + \theta\Lambda)\psi_\theta = \Pi_\theta\big([V_b*(\psi^\dagger\beta\psi)] \beta\psi\big),\\
\psi_\theta(0)= \psi_{0, \theta}.
\end{array}\right.
\end{align}
 Note that $\psi =\psi_+ + \psi_-$.

The present authors obtained small data global well-posedness and scattering for the equation \eqref{h-w} for $L_x^{2,\sigma}(\mathbb R^3) (\sigma > 0)$ and $L_x^2(\mathbb R^2)$ data in \cite{cholee} and \cite{chohlee}, respectively. The space $L_x^{2, \sigma}$ is the angularly regular space defined by $\Lambda_{\mathbb S^2}^{-\sigma}L_x^2$ and its norm  is defined by $\|f\|_{L_x^{2, \sigma}} := \|\Lambda_{\mathbb S^2}^\sigma f\|_{L_x^2}$, where $\Lambda_{\mathbb S^2} = (1 - \Delta_{\mathbb S^2})^\frac12$ and $\Delta_{\mathbb S^2}$ is the Laplace-Beltrami operator on the unit sphere $\mathbb S^2$. In this paper we aim to establish them for \textit{conditionally large} data, i.e., we do not assume the smallness of initial data in $L^{2,\sigma}(\mathbb R^3), L^2(\mathbb R^2)$, but pursue to find conditions for solutions and data, which is crucial to the proof the large data scattering.

Recently small data scattering results for the equation \eqref{h-w} have been well-known. For instance see \cite{cholee,chohlee,choozlee,tes,tes1,cyang} and references therein. We also refer to \cite{geosha}, which concerns global solutions for large $H^s$-data in $\mathbb R^{1+2}$. However the large data global-in-time existence at the scaling critical regularity is still open. The main difficulty in proving global existence for large data is that it is not easy to exploit the conservation law such as
$$
\|\psi(t)\|_{L^2_x(\mathbb R^d)} = \|\psi_0\|_{L^2_x(\mathbb R^d)}
$$
due to the dependency of Picard's iteration on the auxiliary spaces like $U^2, V^2$, which is proper subspace of $C([0, T^*); L_x^2)$. Even in the three spatial dimensional settings, we know the existence of global solutions only for $L^{2,\sigma}$-data $(\sigma > 0)$.
\subsection{Bounded dispersive norm condition}

One strategy to overcome such a problem is to enlighten the following question: whether the time evolution of the equation with large data may obey good control under a particular condition to somewhat rough space. It turns out to be the case in the spirit of the work by \cite{candyherr,candyherr1}.
To be precise, given an interval $I\subset\mathbb R$ and $s, \sigma \in \mathbb R$, we define the dispersive type norm in 3d by
\begin{align}
\|u\|_{\mathbf D^{s, \sigma}(I)} = \left( \sum_{N\in 2^\mathbb N}N^{2\sigma}\|\Lambda^{s} H_N u\|_{L^4_tL^4_x(I\times\mathbb R^3)}^2 \right)^\frac12,	
\end{align}
where $H_N$ denotes the projection on angular frequencies of size $N$ (see \eqref{hn} below). When $d = 2$, we do not consider angular regularity and we always put $\sigma = 0$. We define 2d dispersive type norm by
$$
\|u\|_{\mathbf D^{s, 0}(I)} = \|\Lambda^s u\|_{L^4_tL^4_x(I\times\mathbb R^2)}.
$$
We choose $s = -\frac12$ due to the regularity loss of Strichartz bound $\|\psi\|_{\mathbf D^{-\frac12, \sigma}}\lesssim\|\psi(0)\|_{L^{2, \sigma}}$ for free spinor $\psi$ and thus $\mathbf D^{-\frac12, \sigma}$ is a quite rough space. Moreover, it is also meaningful in 3d because the norm $\|\cdot\|_{\mathbf D^{-\frac12, \sigma}}$ becomes a scaling-critical space in view of the $L^2$-scaling critical structure of \eqref{main-eq}. (See \cite{choozxia}.) Our main result clarifies the above strategy. Once we keep the boundedness in a rough space $\mathbf D^{-\frac12,\sigma}$, we can control the time evolution property of the equation for large initial data.
 %The aim of this paper is to prove the following.
\begin{thm}\label{large-gwp}%[Large data scattering on $\mathbb R^{1+d}$]
Let $\sigma > 0$ for $d = 3$ and $\sigma = 0$ for $d = 2$. We let $\sigma>0$ be arbitrarily small positive number. Consider any maximal $L^{2, \sigma}$-solution
$$
\psi \in C_{\rm loc}\left(I^*;L^{2,\sigma}\left(\mathbb R^d,\mathbb C^{\tilde d}\right)\right),
$$	
to the equation \eqref{main-eq} on $\mathbb R^{1+d}$. Suppose that the solutions satisfy the following boundedness condition:
$$
\sup_{t\in I^*}\|\psi(t)\|_{L^{2, \sigma}(\mathbb R^d)} <+\infty.
$$
If $\|\psi\|_{\mathbf D^{-\frac12, \sigma}(I^*)}<\infty$, then we have $I^*=\mathbb R$ and the solution $\psi$ scatters to a free solution in $L_x^{2, \sigma}$ as $t\rightarrow\pm\infty$.
\end{thm}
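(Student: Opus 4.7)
The plan is to adapt the Candy--Herr bootstrap strategy \cite{candyherr,candyherr1} to our Hartree--Dirac setting, building on the $U^2/V^2$-based local theory of \cite{chohlee,cholee}. The finiteness of $\|\psi\|_{\mathbf D^{-\frac12,\sigma}(I^*)}$ allows a partition $I^* = \bigcup_{k=1}^J I_k$ into finitely many closed subintervals on which $\|\psi\|_{\mathbf D^{-\frac12,\sigma}(I_k)} < \varepsilon$, for a small absolute constant $\varepsilon$ to be fixed. On each $I_k$ the smallness of this rough dispersive norm will play the role that smallness of the initial data played in the small-data theories of \cite{chohlee,cholee}, so that a Picard iteration can be closed in the $U^2_\Lambda$-adapted resolution space $F^{0,\sigma}$ of those papers.

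The central analytic ingredient is therefore a modified trilinear estimate. Writing $F^{0,\sigma}$ and $N^{0,\sigma}$ for the resolution and duality spaces of \cite{chohlee,cholee}, I aim to establish
\[
\bigl\| \Pi_\theta \bigl( [V_b*(\psi_1^\dagger \beta\psi_2)]\, \beta\psi_3 \bigr) \bigr\|_{N^{0,\sigma}(I)} \lesssim \|\psi_{\pi(1)}\|_{\mathbf D^{-\frac12,\sigma}(I)}\, \|\psi_{\pi(2)}\|_{F^{0,\sigma}(I)}\, \|\psi_{\pi(3)}\|_{F^{0,\sigma}(I)}
\]
for every permutation $\pi$ of $\{1,2,3\}$. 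Since the Strichartz transfer principle gives $\|\psi\|_{\mathbf D^{-\frac12,\sigma}}\lesssim\|\psi\|_{F^{0,\sigma}}$, this inequality is strictly stronger than the trilinear bounds used in \cite{chohlee,cholee}. However, those bounds are built out of dyadic/angular bilinear null-form estimates chained with a single Strichartz $L^4_tL^4_x$ factor; by placing that Strichartz factor on the designated rough spinor and leaving the $U^2/V^2$ machinery to handle the other two entries, one should recover exactly the rebalanced inequality, provided symmetry across the three entries and across the high-high, high-low, and low-high frequency regimes is verified. In two dimensions the logarithmic behavior of the Yukawa kernel must be handled via the heat-type representation of $V_b$ recalled in the introduction, exactly as in \cite{chohlee}.

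With this estimate, a standard bootstrap on each $I_k$, initialized at the datum $\psi(t_k)$ which is bounded in $L^{2,\sigma}$ by hypothesis, produces a quantitative bound $\|\psi\|_{F^{0,\sigma}(I_k)}\le C\bigl(\sup_{t\in I^*}\|\psi(t)\|_{L^{2,\sigma}}\bigr)$ depending only on $\varepsilon$ and on the uniform energy. Summing over $k=1,\dots,J$ via the square-function structure of $F^{0,\sigma}$ yields $\|\psi\|_{F^{0,\sigma}(I^*)}<\infty$; the blow-up alternative of the local theory then forces $I^*=\mathbb R$, and the resulting finiteness of the nonlinearity in $N^{0,\sigma}(\mathbb R)$ makes the free evolutions $e^{-it\theta\Lambda}\psi_\theta(t)$ Cauchy in $L^{2,\sigma}$ as $t\to\pm\infty$, which identifies the scattering states. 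I expect the chief obstacle to be the rebalanced trilinear estimate itself: because $\mathbf D^{-\frac12,\sigma}$ carries no $U^2/V^2$ structure, the rough factor cannot be paired through duality and must be absorbed purely through its $L^4_tL^4_x$ content, which forces a careful reconsideration of the angular and null-form decompositions of \cite{chohlee,cholee} so that the output still lies in $N^{0,\sigma}$ symmetrically in all three arguments.
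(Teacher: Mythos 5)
Your high-level strategy matches the paper's: treat the finite dispersive norm as a smallness parameter, run a time-local bootstrap, and propagate an a priori $F^{0,\sigma}$ bound to the whole interval, which gives $I^*=\mathbb R$ and scattering from the existence of $V^2$ left limits. The interval-partition variant you propose is a discrete version of the paper's continuity argument (Theorem \ref{data-dis-thm} bootstraps over $T$ and the final proof only inspects a single tail interval $[t_1,t^*)$ obtained by dominated convergence); both are fine.

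The genuine gap is in the trilinear estimate you assert. You propose
\[
\bigl\| \Pi_\theta \bigl( [V_b*(\psi_1^\dagger \beta\psi_2)]\, \beta\psi_3 \bigr) \bigr\|_{N^{0,\sigma}(I)} \lesssim \|\psi_{\pi(1)}\|_{\mathbf D^{-\frac12,\sigma}(I)}\, \|\psi_{\pi(2)}\|_{F^{0,\sigma}(I)}\, \|\psi_{\pi(3)}\|_{F^{0,\sigma}(I)}
\]
for every permutation $\pi$. The paper does not prove this; Proposition \ref{main-tri-est} is the $\eta$-interpolated form
\[
\lesssim \bigl(\|\varphi\|_{\mathbf D^{-\frac12,\sigma}}\|\phi\|_{\mathbf D^{-\frac12,\sigma}}\|\psi\|_{\mathbf D^{-\frac12,\sigma}}\bigr)^{\eta}\bigl(\|\varphi\|_{F^{0,\sigma}}\|\phi\|_{F^{0,\sigma}}\|\psi\|_{F^{0,\sigma}}\bigr)^{1-\eta},
\]
with $\eta<\sigma/10$ small, and this is not a cosmetic difference. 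The obstruction is exactly what the authors flag: in the intermediate modulation regime $\lambda_{\min}\ll h\ll\lambda_{\max}$, the factor wearing the modulation cut $C^\theta_{\approx h}$ must supply a gain $h^{-\frac14(1-\eta)}$ via the $V^2$ high-modulation bound \eqref{bdd-high-mod} (after Bernstein), or else the dyadic sum over $h$ between $\lambda_{\min}$ and $\lambda_{\max}$ diverges. Your estimate places one full input in $\mathbf D^{-\frac12,\sigma}$, i.e. purely in $L^4_{t,x}$ with no $V^2$ content, and requires the bound to hold symmetrically. But for the permutation that assigns the rough input to the position that must carry the modulation weight (e.g.\ the low-frequency input in a Low$\times$High interaction with $\lambda_{\min}\ll h\ll\lambda_{\max}$, as in the paper's estimate of $A_{0,1}$ and $A_{1,2}$), there is no $V^2$ norm available to furnish $h^{-\frac14}$: $\|C^{\theta}_{\approx h}\varphi_{\Theta}\|_{L^4_{t,x}}$ does not decay in $h$, and the square-function over cubes on the other input only produces frequency-ratio gains, not an $h$ gain. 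This is why the paper writes each factor as $A = A^\eta A^{1-\eta}$ and keeps a $(1-\eta)$ share of $V^2$ on every input, so the modulation gain survives at the cost of a tiny loss that $\sigma$ can absorb. You acknowledge this as the "chief obstacle," but "reconsidering the angular and null-form decompositions" does not resolve it; you would have to discover a different mechanism for the $h$-summation (e.g.\ a modulation square-function in $L^4$, which goes the wrong way for the $\ell^2_h$ summation required). As stated, your trilinear estimate is not proved, and the argument for the theorem does not go through without it.
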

%\begin{thm}
%Consider any maximal $L^{2}$-solution
%$$
%\psi \in C_{\rm loc}(I^*;L^{2}(\mathbb R^2,\mathbb C^2)),
%$$	
%to the equation \eqref{main-eq} on $\mathbb R^{1+2}$. Suppose that the solutions satisfy the following boundedness:
%$$
%\sup_{t\in I^*}\|\psi(t)\|_{L^{2}(\mathbb R^2)} <+\infty.
%$$
%If $\|\psi\|_{\mathbf D^{-\frac12, 0}(I^*)}<\infty$, then we have $I^*=\mathbb R$ and the solution $\psi$ scatter to a free solution in $L^2$ as $t\rightarrow\pm\infty$.
%\end{thm}
The proof follows from a refinement of the multilinear estimates by the previous works \cite{chohlee,cholee}. That is, we make a small quantity for $L^4_{t,x}$ norms on the right-hand side of trilinear estimates. The trilinear estimates in the low modulation regime are rather complicated than in the high modulation regime. When the modulation is lower than the highest input-frequency, we study bilinear forms $\varphi^\dagger\beta\phi$ and decompose it into the modulation localised terms. We deal with all possible frequency interactions such as High$\times$High to Low and Low$\times$High cases. The most delicate case in the proof occurs when the modulation is between the highest frequency and lowest frequency. Since we are only allowed to use $L^4_{t,x}$-Strichartz estimates in our approach, it gives rise to a slightly bigger bound than in other cases.
\subsection{Majorana condition}
Now we shall consider the problem of time evolution property of large dispersive solutions of the equation \eqref{main-eq} in the different way. Let us assume that the initial data $\psi_0$ satisfies
%
% is of the form
%\begin{align}
%\psi_0 = (f,g,-g^*,-f^*)^T,	
%\end{align}
%for any complex-valued functions $f,g:\mathbb R^3\rightarrow\mathbb C$, one also get
\begin{align}\label{majorana-condi}
\psi_0+z\gamma^2\psi^*_0=0	
\end{align}
with $z=e^{i\omega}, \omega \in \mathbb R$. Here $z\gamma^2\psi^*$ is referred as a charge conjugation of Dirac field $\psi$. \eqref{majorana-condi} implies the Dirac field is its own antimatter field \cite{bj-dr}. Under this condition the cubic Dirac equations reduce to linear equation of $\psi$, since we have $\overline\psi\psi=0$, where $\overline\psi=\psi^\dagger\gamma^0$. For this see \cite{chagla, ozya}. Such a structural condition is first introduced by Ettore Majorana \cite{majorana}, and we refer this condition to the Majorana condition. We can readily obtain global existence and scattering for the equation \eqref{main-eq} within the Majorana condition for any initial data, since the corresponding solution will evolve \textit{linearly} in time.

The natural question is then whether the time evolution property may be stable by a small perturbation on the Majorana condition for a given large initial data. That is, we do not assume that $\psi_0+z\gamma^2\psi_0^*$ is not identically zero. To be precise, for any large initial data $\psi_0$ such that $\|\psi_0\|_{L^{2,\sigma}} = {\mathsf A} \gtrsim 1$, we make some perturbation on \eqref{majorana-condi} as follows:
\begin{align}
\|\psi_0+z\gamma^2\psi_0^*\|_{L^{2,\sigma}}\le\mathtt a,	
\end{align}
    for a relatively small $\mathtt a > 0$ which depends on the quantity ${\mathsf A}$. Thus we aim to prove the following theorem.
    \begin{thm}\label{majorana-gwp}
Let $z=e^{i\omega}, \omega \in \mathbb R$. Let $\sigma > 0$ for $d= 3$ and $\sigma = 0$ for $d= 2$. For any ${\mathsf A} > 0$, there exists $\mathtt a = \mathtt a({\mathsf A}) > 0$ such that for all initial data satisfying
$$
\|\psi_0\|_{L^{2,\sigma}(\mathbb R^d)}\le {\mathsf A} \textrm{ and } \|\psi_0+z\gamma^2\psi_0^*\|_{L^{2,\sigma}(\mathbb R^d)} \le \mathtt a
$$
the Cauchy problem of \eqref{main-eq} on $\mathbb R^{1+d}$ is globally well-posed and solutions scatter to free solutions in $L^{2, \sigma}$ as $t\rightarrow\pm\infty$.
\end{thm}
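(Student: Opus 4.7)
The plan is to reduce Theorem \ref{majorana-gwp} to Theorem \ref{large-gwp} by a perturbation argument exploiting the vanishing of the cubic nonlinearity on Majorana spinors. Define the charge conjugation $\mathcal C\psi := z\gamma^2\psi^*$; since $|z|=1$, $\overline{\gamma^2}=-\gamma^2$ and $(\gamma^2)^2=-\mathbb I$ in both $d=2$ and $d=3$, one has $\mathcal C^2=\mathbb I$. I decompose $\psi_0 = \psi_0^M + \psi_0^P$ with
\begin{align*}
\psi_0^M := \tfrac12(\psi_0 - \mathcal C\psi_0),\qquad \psi_0^P := \tfrac12(\psi_0 + \mathcal C\psi_0),
\end{align*}
so that $\mathcal C\psi_0^M = -\psi_0^M$, $\|\psi_0^M\|_{L^{2,\sigma}}\le \mathsf A$, and $\|\psi_0^P\|_{L^{2,\sigma}} \le \tfrac12\mathtt a$. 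Let $\phi$ be the free Dirac evolution of $\psi_0^M$. The identity $\gamma^2\overline{\gamma^\mu} = -\gamma^\mu\gamma^2$, checked directly from the matrices of the introduction, shows that $\mathcal C$ commutes with the free Dirac flow, so $\phi(t)$ remains Majorana for every $t$. Combined with the antisymmetry of $\gamma^2\gamma^0$ this yields the pointwise identity $\phi^\dagger\beta\phi\equiv 0$; hence $\phi$ solves \eqref{dirac} with vanishing nonlinearity, and Strichartz gives $\|\phi\|_{\mathbf D^{-\frac12,\sigma}(\mathbb R)} + \sup_t\|\phi(t)\|_{L^{2,\sigma}}\lesssim \mathsf A$.

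Setting $\eta := \psi - \phi$, the cancellation $\phi^\dagger\beta\phi=0$ implies
\begin{align*}
(-i\alpha^\mu\partial_\mu + \beta)\eta = \mathcal N(\phi,\eta),\qquad \eta(0) = \psi_0^P,
\end{align*}
where $\mathcal N(\phi,\eta)$ collects the six trilinear terms obtained by expanding $[V_b*(\psi^\dagger\beta\psi)]\beta\psi$ and discarding the vanishing piece $[V_b*(\phi^\dagger\beta\phi)]\beta\phi$. Every remaining term contains at least one factor of $\eta$; this is the structural gain of the Majorana hypothesis.

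Since $\|\phi\|_{\mathbf D^{-\frac12,\sigma}(\mathbb R)}\lesssim\mathsf A$ is finite, for each $\delta>0$ I may partition $\mathbb R$ into $J = J(\mathsf A,\delta)$ consecutive subintervals $I_j$ on which $\|\phi\|_{\mathbf D^{-\frac12,\sigma}(I_j)}\le\delta$. Applying to each term of $\mathcal N(\phi,\eta)$ the refined multilinear estimates underlying Theorem \ref{large-gwp} in the iteration space $S$ used in \cite{cholee,chohlee}, one obtains on every $I_j$ a bound of the schematic shape
\begin{align*}
\|\eta\|_{S(I_j)} \lesssim \|\eta(t_{j-1})\|_{L^{2,\sigma}} + \delta^2\|\eta\|_{S(I_j)} + \delta\|\eta\|_{S(I_j)}^2 + \|\eta\|_{S(I_j)}^3,
\end{align*}
where $S$ controls both $\mathbf D^{-\frac12,\sigma}$ and $C_t L^{2,\sigma}$. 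For $\delta$ absolute-small the linear-in-$\eta$ term is absorbed and a standard contraction on each subinterval yields $\|\eta(t_j)\|_{L^{2,\sigma}}\le 2\|\eta(t_{j-1})\|_{L^{2,\sigma}}$ so long as this quantity stays below an absolute threshold. Bootstrapping through the $J$ subintervals and choosing $\mathtt a=\mathtt a(\mathsf A)\lesssim 2^{-J(\mathsf A)}$ produces a global $\eta$ with $\|\eta\|_{\mathbf D^{-\frac12,\sigma}(\mathbb R)}+\sup_t\|\eta(t)\|_{L^{2,\sigma}}\lesssim_{\mathsf A} 1$. Then $\psi = \phi + \eta$ satisfies both hypotheses of Theorem \ref{large-gwp}, from which global well-posedness and scattering follow.

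The main difficulty is the pair of linear-in-$\eta$ terms $[V_b*(\phi^\dagger\beta\eta + \eta^\dagger\beta\phi)]\beta\phi$: their natural trilinear estimate carries two factors of the \emph{large} background $\phi$, so no smallness of $\eta(0)$ alone can tame them. The time subdivision is essential, and it is precisely this obstruction that forces $\mathtt a$ to depend exponentially on $J(\mathsf A)$.
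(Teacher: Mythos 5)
Your proposal is correct in spirit but takes a genuinely different route from the paper, and has one gap that needs to be addressed.

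The paper does not perturb off a free Majorana background at all, and it uses no time subdivision. Instead it studies the coupled system \eqref{psi-decomp}, in which the cross-term nonlinearity $V_b*(\overline{P^z_\theta\varphi}\,P^z_{-\theta}\phi + \overline{P^z_{-\theta}\phi}\,P^z_\theta\varphi)$ is \emph{built in} via the algebraic identity $\overline{P^z_\theta\psi}\,P^z_\theta\psi\equiv 0$; since $P^z_\theta$ commutes with the Dirac operator and with multiplication by the (real) potential, applying $P^z_\theta$, $P^z_{-\theta}$ to the two equations and adding yields a genuine solution of \eqref{main-eq}. Because every cubic term in the system carries exactly one factor of the small component $\varphi$, a single \emph{global} fixed point closes in the weighted norm $\|(\varphi,\phi)\|_X=\mathtt a^{-1}\|\varphi\|_{F^{0,\sigma}}+\mathsf A^{-1}\|\phi\|_{F^{0,\sigma}}$. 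Only the $\eta=0$ version of Proposition~\ref{main-tri-est} is needed, and the trade-off is simply $\mathtt a\lesssim\mathsf A^{-1}$. By contrast, you linearise about the free Majorana evolution $\phi$, so the perturbation $\eta$ no longer retains any Majorana structure and the algebraic cancellation cannot be reused recursively; this forces you into a Candy--Herr-type interval subdivision (borrowing the mechanism of Theorem~\ref{data-dis-thm} and Proposition~\ref{main-tri-est} with $\eta>0$) and the exponential loss $\mathtt a\lesssim 2^{-J}$. What your route buys is a precise linear-equation background and a transparent identification of which cubic interactions are dangerous; what it costs is the much worse dependence $\mathtt a(\mathsf A)$ and significantly more machinery.

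The gap is in the assertion that the subdivision count $J$ can be taken as $J=J(\mathsf A,\delta)$. The number of subintervals needed to make $\|\phi\|_{\mathbf D^{-\frac12,\sigma}(I_j)}\le\delta$ depends a priori on the concentration profile of the free solution $\phi$, not merely on $\|\phi(0)\|_{L^{2,\sigma}}\le\mathsf A$. In $d=2$, where $\mathbf D^{-\frac12,0}$ is just $\|\Lambda^{-1/2}\cdot\|_{L^4_{t,x}}$, a greedy partition gives $J\le 1+(\mathsf A/\delta)^4$ and the claim is fine. But in $d=3$ the norm has the outer $\ell^2_N$ sum over angular frequencies, and $\ell^2_N L^4_{t,x}$ does not partition additively in time: there is no inequality of the form $\sum_j\|\phi\|^p_{\mathbf D(I_j)}\lesssim\|\phi\|^p_{\mathbf D(\mathbb R)}$ for any $p$. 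One can repair this by dominating the $\ell^2_N L^4_tL^4_x$ norm by a genuine $L^4_tL^4_x$ norm of $\Lambda_{\mathbb S^2}^\sigma\phi$ via Minkowski's inequality ($\ell^2_N L^4_t\le L^4_t\ell^2_N$) and an angular square-function estimate, establishing finiteness of the latter for free solutions with $L^{2,\sigma}$ data; this is exactly what you need but is nontrivial and not mentioned. Without it, $J$ (hence $\mathtt a$) depends on $\psi_0$ rather than only on $\mathsf A$, which is weaker than Theorem~\ref{majorana-gwp}. The computation that $\mathcal C$ commutes with the free flow and that $\phi^\dagger\beta\phi\equiv 0$ along it is correct, as is the structural observation that every surviving cubic term carries at least one factor of $\eta$.
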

  \noindent  However, the smallness condition on the form $\psi_0+z\gamma^2\psi_0^*$ yields large norm on the other side as follow:
    \begin{align}
    \|\psi_0 - z\gamma^2\psi_0^*\|_{L^{2,\sigma}}\le \mathsf {\mathsf A}.	
    \end{align}
Now we decompose the spinor field $\psi$ into
\begin{align}
\psi = \frac12(\psi+z\gamma^2\psi^*)+\frac12(\psi-z\gamma^2\psi^*)=: P_+^z\psi +P^z_-\psi. 	
\end{align}
Simple observation leads us that for $\theta\in\{+,-\}$, $P^z_\theta P^z_\theta=P^z_\theta$ and $P^z_\theta P^z_{-\theta}=0$ and hence the operator $P^z_\theta$ is truly the projection operator. Thanks to the perturbed Majorana condition, we do not need to control any rough space during iteration process as what we have done in Theorem \ref{large-gwp}. Instead, we will show that the time evolution property of $\psi(t)$ for the data $\psi_0$ is determined by the two evolutions; $P^z_\theta\psi$ and $P_{-\theta}^z\psi$. More precisely, instead of studying the Cauchy problems for \eqref{main-eq}, we are concerned with the Cauchy problems for a system of cubic Dirac equations as follows: for sufficiently smooth $\varphi, \phi$, 
\begin{align}
\begin{aligned}
-i\gamma^\mu\partial_\mu\varphi+\varphi = V_b*(\overline{P^z_\theta\varphi}P^z_{-\theta}\phi+\overline{P^z_{-\theta}\phi}P^z_\theta\varphi)\varphi, \\
-i\gamma^\mu\partial_\mu\phi+\phi = V_b*(\overline{P^z_\theta\varphi}P^z_{-\theta}\phi+\overline{P^z_{-\theta}\phi}P^z_\theta\varphi)\phi.
\end{aligned}
\end{align}
with initial data
\begin{align}
\varphi|_{t=0} = P^z_\theta\psi_0,\quad \phi|_{t=0} = P^z_{-\theta}\psi_0.	
\end{align}
%For a moment, we suppose that we are given solutions to the equation \eqref{psi-decomp} with data \eqref{decomp-data}.
Recall that $\overline\psi=\psi^\dagger\gamma^0$. By taking $P^z_\theta$ to both sides for the first equation and $P^z_{-\theta}$ for the second equation respectively, we would obtain solutions $P^z_\theta\varphi(t)$ and $P^z_{-\theta}\phi(t)$ to the system. Now we denote
$$
\psi(t) := P^z_\theta\varphi(t) + P^z_{-\theta}\phi(t).
$$
Then $\psi|_{t=0}=\psi_0$ and $\psi$ is truly the solution to the original equation. %Therefore, we can conclude that $\psi=\uppsi$.

The main scheme of exploiting the Majorana condition is as follows. Given a large data $\psi_0$, say, $\|\psi_0\|_{L^{2,\sigma}}={\mathsf A} \gtrsim 1$, we decompose the initial data with respect to the charge conjugation using the projections $P^z_\theta$. If the most of the norm is occupied in the one of the charge conjugation, say $P^z_{-\theta}\psi_0$, then the other one $P^z_\theta\psi_0$ must have small norm. Then via the standard iteration methods, we see that the time evolution of $P^z_\theta\psi_0$ and $P^z_{-\theta}\psi_0$ would be controlled by the norm $\|P^z_\theta\psi_0\|_{L^{2,\sigma}}$ and $\|P^z_{-\theta}\psi_0\|_{L^{2,\sigma}}$, respectively. In summary, the global-in-time evolution of  $P^z_\theta\psi_0$ and $P^z_{-\theta}\psi_0$ is established in an open neighborhood of large data with critical regularity assumption.

%We also have the similar result for two spatial dimensional setting.
%\begin{thm}
%Let $z\in\mathbb C$, $|z|=1$. For any $A\ge1$, there exists $\epsilon=\epsilon(A)$ such that for all initial data satisfying
%$$
%\|\psi_0\|_{L^{2}(\mathbb R^2)}\le A \textrm{ and } \|\psi_0+z\gamma^2\psi_0^*\|_{L^{2}(\mathbb R^2)}\le\epsilon
%$$
%the Cauchy problems on \eqref{main-eq} on $\mathbb R^{1+2}$ is globally well-posed and solutions scatter to free solutions in $L^2$ as $t\rightarrow\pm\infty$.
%\end{thm}
%%%%%%%%%%%%%%%%%%%%%%%%%%%%%%%%%%%%%%%%%%%%%%%%%%%%%%%%%%%%%%%%%%%%%%%%%%%%%%%%%%%%%%%%%%%%%%%%%%%%%%%%%%%%%%%%%%%%%%%%%%%%%%%%%%%%%%%%%%%%%%%%%%%%%%%%%%%%%%%%%%%%%%%%%%%%%%%%%%%%%%%
%%%%%%%%%%%%%%%%%%%%%%%%%%%%%%%%%%%%%%%%%%%%%%%%%%%%%%%%%%%%%%%%%%%%%%%%%%%%%%%%%%%%%%%%%%%%%%%%%%%%%%%%%%%%%%%%%%%%%%%%%%%%%%%%%%%%%%%%%%%%%%%%%%%%%%%%%%%%%%%%%%%%%%%%%%%%%%%%%%%%%%%
\subsection*{Organisation} We organise the rest of this paper as follow. We give the proof of Theorem \ref{large-gwp} in Section \ref{sketch}. The proof relies heavily on the multilinear estimates Proposition \ref{main-tri-est}. We present preliminaries which will be used for the proof of Proposition \ref{main-tri-est} in Section \ref{pre}. Then Section \ref{sec-multi-3d} and Section \ref{multi-2d} are devoted to the proof of Proposition \ref{main-tri-est} in $d=3$ and $d=2$, respectively. Finally we prove Theorem \ref{majorana-gwp} in Section \ref{sec-maj}.

\subsection*{Notations}
\begin{enumerate}
\item
As usual different positive constants, which are independent of dyadic numbers $\mu,\lambda$, and $h$ are denoted by the same letter $C$, if not specified. The inequalities $A \lesssim B$ and $A \gtrsim B$ means that $A \le CB$ and
$A \ge C^{-1}B$, respectively for some $C>0$. By the notation $A \approx B$ we mean that $A \lesssim B$ and $A \gtrsim B$, i.e., $\frac1CB \le A\le CB $ for some absolute constant $C$. We also use the notation $A\ll B$ if $A\le \frac1CB$ for some large constant $C$. Thus for quantities $A$ and $B$, we can consider three cases: $A\approx B$, $A\ll B$ and $A\gg B$. In fact, $A\lesssim B$ means that $A\approx B$ or $A\ll B$.

The spatial and space-time Fourier transform are defined by
$$
\widehat{f}(\xi) = \int_{\mathbb R^d} e^{-ix\cdot\xi}f(x)\,dx, \quad \widetilde{u}(\tau,\xi) = \int_{\mathbb R^{1+d}}e^{-i(t\tau+x\cdot\xi)}u(t,x)\,dtdx.
$$
We also write $\mathcal F_x(f)=\widehat{f}$ and $\mathcal F_{t, x}(u)=\widetilde{u}$. We denote the backward and forward wave propagation of a function $f$ on $\mathbb R^d$ by
$$
e^{-\theta it \Lambda}f = \frac1{(2\pi)^d}\int_{\mathbb R^d}e^{ix\cdot\xi}e^{-\theta it\Lambda(\xi)}\widehat{f}(\xi)\,d\xi.
$$

\item
We fix a smooth function $\rho\in C^\infty_0(\mathbb R)$ such that $\rho$ is supported in the set $\{ \frac12<r<2\}$ and we let
$$
\sum_{\lambda\in2^{\mathbb Z}}\rho\left(\frac r\lambda\right) =1,
$$
and write $\rho_1=\sum_{\lambda\le1}\rho(\frac r\lambda)$ with $\rho_1(0)=1$.  Now we define the standard Littlewood-Paley multipliers for $\lambda\in 2^{\mathbb N}$ and $\lambda>1$:
$$
P_\lambda = \rho\left(\frac{|-i\nabla|}{\lambda}\right),\quad P_1=\rho_1(|-i\nabla|).
$$

\item
We let $Y_{\ell}$ be the set of homogeneous harmonic polynomial of degree $\ell$ on $\mathbb R^3$. Then define $\{ y_{\ell,n} \}_{n=0}^{2\ell}$ a set of orthonormal basis for $Y_{\ell}$, with respect to the inner product:
\begin{align}
\langle y_{\ell,n},y_{\ell',n'}\rangle_{L^2_\omega(\mathbb S^2)} = \int_{\mathbb S^2}{y_{\ell,n}(\omega)} \overline{y_{\ell',n'}(\omega)}\,d\omega.
\end{align}
Given $f\in L^2_x(\mathbb R^3)$, we have the orthogonal decomposition as follow:
\begin{align}
f(x) = \sum_{\ell}\sum_{n=0}^{2\ell}\langle f(|x|\omega),y_{\ell,n}(\omega)\rangle_{L^2_\omega(\mathbb S^2)}y_{\ell,n}\big(\frac{x}{|x|}\big).
\end{align}
For a dyadic number $N>1$, we define the spherical Littlewood-Paley decompositions by
\begin{align}\begin{aligned}\label{hn}
H_N(f)(x) & = 	\sum_{\ell}\sum_{n=0}^{2\ell}\rho\left(\frac\ell N\right)\langle f(|x|\omega),y_{\ell,n}(\omega)\rangle_{L^2_\omega(\mathbb S^2)}y_{\ell,n}\big(\frac{x}{|x|}\big), \\
H_1(f)(x) & = \sum_{\ell}\sum_{n=0}^{2\ell}\rho_{\le1}(\ell)\langle f(|x|\omega),y_{\ell,n}(\omega)\rangle_{L^2_\omega(\mathbb S^2)}y_{\ell,n}\big(\frac{x}{|x|}\big).
\end{aligned}\end{align}
\end{enumerate}
%Finally, we shall use usual inner products for the normed space $\mathbb C^4$ and $L^2_x(\mathbb R^3)$. Namely, we write
%$$
%\langle\psi,\phi\rangle_{\mathbb C^4} = \psi^\dagger\phi,
%$$
%for spinor fields $\phi,\psi:\mathbb R^{1+3}\rightarrow\mathbb C^4$. We also write
%$$
%\langle f,g\rangle_{L^2_x} = \int_{\mathbb R^3} f(x)\overline{g(x)}\,dx,
%$$
%for any $L^2_x$-functions $f$ and $g$.

%%%%%%%%%%%%%%%%%%%%%%%%%%%%%%%%%%%%%%%%%%%%%%%%%%%%%%%%%%%%%%%%%%%%%%%%%%%%%%%%%%%%%%%%%%%%%%%%%%%%%%%%%%%%%%%%%%%%%%%%%%%%%%%%%%%%%%%%%%%%%%%%%%%%%%%%%%%%%%%%%%%%%%%%%%%%%%%%%%%%%%%
%%%%%%%%%%%%%%%%%%%%%%%%%%%%%%%%%%%%%%%%%%%%%%%%%%%%%%%%%%%%%%%%%%%%%%%%%%%%%%%%%%%%%%%%%%%%%%%%%%%%%%%%%%%%%%%%%%%%%%%%%%%%%%%%%%%%%%%%%%%%%%%%%%%%%%%%%%%%%%%%%%%%%%%%%%%%%%%%%%%%%%%
\section{Proof of Theorem \ref{large-gwp}}\label{sketch}
We define our main function norm as
\begin{align*}
\|\phi\|_{F^{s,\sigma}_\theta(I)} &= \left( \sum_{\lambda,N\in 2^\mathbb N}\lambda^{2s}N^{2\sigma}\|\Pi_\theta P_\lambda H_N \phi\|_{V^2_\theta(I)}^2 \right)^\frac12, \\
\|\phi\|_{F^{s,0}_\theta(I)} &= \left( \sum_{\lambda\in 2^\mathbb N}\lambda^{2s}\|\Pi_\theta P_\lambda \phi\|_{V^2_\theta(I)}^2 \right)^\frac12,
\end{align*}
where $\theta \in \{+, -\}$. If $I = \mathbb R$, then we drop $I$. We say $\psi \in F^{s, \sigma}(I)$ if $\phi_\theta \in F_\theta^{s, \sigma}(I)$ for each $\theta \in \{+, -\}$. We write $$\|\phi\|_{F^{s, \sigma}(I)} = \sum_{\theta \in \{+,-\}}\|\phi_\theta\|_{F_\theta^{s, \sigma}(I)}.$$

%%%%%%%%%%%%%%%%%%%%%%%%%%%%%%%%%%%%%%%%%%%%%%%%%%%%%%%%%%%%%%%%%%%%%%%%%%%%%%%%%%%%%%%%%%%%%%%%%%%%%%%%%%%%%%%%%%%%%%%%%%%%%%%%%%%%%%%%%%%%%%%%%%%%%%%%%%%%%%%%%%%%%%%%%%%%%%%%%%%%%%%
%%%%%%%%%%%%%%%%%%%%%%%%%%%%%%%%%%%%%%%%%%%%%%%%%%%%%%%%%%%%%%%%%%%%%%%%%%%%%%%%%%%%%%%%%%%%%%%%%%%%%%%%%%%%%%%%%%%%%%%%%%%%%%%%%%%%%%%%%%%%%%%%%%%%%%%%%%%%%%%%%%%%%%%%%%%%%%%%%%%%%%%
\subsection{Local theory}
At the first step we  present the local well-posedness result. To do so we give the explicit definition on the maximal solutions.
\begin{defn}
Let $s,\sigma\in\mathbb R$. If $d = 3$, then $\sigma > 0$, and if $d= 2$, then $\sigma = 0$. When $d = 3$, we define the angularly regular Sobolev spaces by $H^{s, \sigma} := \{f \in H^s : \|f\|_{H^{s, \sigma}} := \|\Lambda^s\Lambda_{\mathbb S^2}^\sigma f\|_{L_x^2} < \infty\}$.
\begin{enumerate}
\item We say $\psi:I\times\mathbb R^d\rightarrow\mathbb C^{\tilde d}$ is an $H^{s, \sigma}$-strong solution on an interval $I\subset\mathbb R$ if
$$
\psi\in C(I,H^{s, \sigma}(\mathbb R^d,\mathbb C^{\tilde d})
$$	
and there exists a sequence $\psi_n\in C^2(I,H^m(\mathbb R^d,\mathbb C^{\tilde d}))$, $m=\max\{10,s\}$, of classical solutions to \eqref{main-eq} such that for any compact $I'\subset I$,
$$
\sup_{t\in I'}\|\psi(t)-\psi_n(t)\|_{H^{s, \sigma}(\mathbb R^d,\mathbb C^{\tilde d})} \rightarrow 0,
$$
as $n\rightarrow\infty$.
\item We say $\psi:[t_0,t^*)\times\mathbb R^d\rightarrow\mathbb C^{\tilde d}$ is a (forward) maximal $H^{s, \sigma}$-solution if the following two properties hold:
\begin{enumerate}
\item for any $t_1\in(t_0,t^*)$, $\psi$ is a strong $H^{s, \sigma}$-solution on $[t_0,t_1)$;
\item if $\psi':I'\times\mathbb R^d\rightarrow\mathbb C^{\tilde d}$ is a strong $H^{s, \sigma}$-solution on an interval $I$ satisfying $I\cap[t_0,t^*)\neq\varnothing$ and $\psi'=\psi$ on $I\cap[t_0,t^*)$, then $I\cap[t_0,\infty)\subset[t_0,t^*)$.
\end{enumerate}
\end{enumerate}
\end{defn}
Given $t_0\in I\subset\mathbb R$ and $F\in L^\infty_tL^2_x(I\times\mathbb R^d)$, for $t\in I$ and $\theta \in \{+, -\}$ let $\mathcal I_{\theta}[F](t_0; t)$ denote the inhomogeneous solution operator for the half-Klein-Gordon equation $(-i\partial_t + \theta \Lambda)\phi = \Pi_\theta F, \phi(t_0) = 0$. Then
$$
\mathcal I_\theta [F](t_0; t) = i\int_{t_0}^t e^{-\theta i(t-t')\Lambda}\Pi_\theta [F](t')\,dt'.
$$
The solution to \eqref{h-w} is written as
$$
\psi_\theta(t) = e^{-\theta i(t-t_0)\Lambda}\psi_\theta(t_0) + \mathcal I_\theta[f](t_0; t),
$$
where $f = [V_b*(\psi^\dagger\beta\psi)] \beta\psi$. We shall prove the following trilinear estimates in the next two sections.
\begin{prop}\label{main-tri-est}
Let $\sigma > 0$ when $d = 3$ and $\sigma = 0$ when $d = 2$. There exist constants $C > 0$ and $\eta<\frac\sigma{10}$ such that if $I\subset\mathbb R$ is a left-closed interval, $t_0\in I$, $\varphi,\phi,\psi\in F^{0,\sigma}(I)$, then for any $\theta \in \{+, -\}$, we have the bounds
\begin{align}\label{0-tri}
\begin{aligned}
	\|\mathcal I_\theta [V_b*(\varphi^\dagger\beta\phi)\beta\psi](t_0; \cdot)\|_{F^{0,\sigma}(I)} & \le C (\|\varphi\|_{\mathbf D^{-\frac12, \sigma}(I)}\|\phi\|_{\mathbf D^{-\frac12, \sigma}(I)}\|\psi\|_{\mathbf D^{-\frac12, \sigma}(I)})^\eta \\
	&\qquad\qquad \times (\|\varphi\|_{F^{0,\sigma}(I)}\|\phi\|_{F^{0,\sigma}(I)}\|\psi\|_{F^{0,\sigma}(I)})^{1-\eta},
	\end{aligned}
\end{align}	
%We also have the bounds:
%\begin{align}
%\begin{aligned}
%	\|\mathcal I_{t_0}[V_b*(\varphi^\dagger\beta\phi)\beta\psi]\|_{F^{0,\sigma}(I)} & \le C\|\varphi\|_{\mathbf D^{-\frac12, \sigma}(I)}^\eta \|\varphi\|_{F^{0,\sigma}(I)}^{1-\eta}\|\phi\|_{F^{0,\sigma}(I)}\|\psi\|_{F^{0,\sigma}(I)} \\
%	& \quad + C\|\varphi\|_{F^{0,\sigma}(I)}\|\phi\|_{\mathbf D^{-\frac12, \sigma}(I)}^\eta\|\phi\|_{F^{0,\sigma}(I)}^{1-\eta}\|\psi\|_{F^{0,\sigma}(I)} \\
	%& \quad + C\|\varphi\|_{F^{0,\sigma}(I)}\|\phi\|_{F^{0,\sigma}(I)}\|\psi\|_{\mathbf D^{-\frac12}_{\sigma}(I)}^\eta \|\psi\|_{F^{0,\sigma}(I)}^{1-\eta}.
	%\end{aligned}
%\end{align}
and for any $s > 0$, we have the fractional Leibniz type bounds
\begin{align}\label{frac-tri}
\begin{aligned}
	\|\mathcal I_{\theta}[V_b*(\varphi^\dagger\beta\phi)\beta\psi](t_0; \cdot)\|_{F^{s,\sigma}(I)} & \le 2^sC\big(\|\varphi\|_{\mathbf D^{-\frac12, \sigma}(I)}^\eta \|\varphi\|_{F^{0,\sigma}(I)}^{1-\eta}\|\phi\|_{F^{s,\sigma}(I)}\|\psi\|_{F^{s,\sigma}(I)} \\
	& \quad + \|\varphi\|_{F^{s,\sigma}(I)}\|\phi\|_{\mathbf D^{-\frac12, \sigma}(I)}^\eta\|\phi\|_{F^{0,\sigma}(I)}^{1-\eta}\|\psi\|_{F^{s,\sigma}(I)} \\
	& \quad + \|\varphi\|_{F^{s,\sigma}(I)}\|\phi\|_{F^{s,\sigma}(I)}\|\psi\|_{\mathbf D^{-\frac12,\sigma}(I)}^\eta \|\psi\|_{F^{0,\sigma}(I)}^{1-\eta}\big).
	\end{aligned}
\end{align}
\end{prop}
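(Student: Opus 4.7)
The plan is to follow the dyadic decomposition and duality scheme of \cite{chohlee,cholee} and introduce an interpolation step that converts part of the baseline $F^{0,\sigma}$-bound into the weaker dispersive $\mathbf D^{-1/2,\sigma}$-bound. Concretely, I would first dualize the $V^2_\theta$-norm against $U^2_\theta$-atomic test functions, decompose each of $\varphi,\phi,\psi$ and the dual test function into Littlewood-Paley frequency pieces $P_\lambda$, spherical pieces $H_N$, and modulation pieces relative to the half-Klein-Gordon hyperboloid, and rewrite the inhomogeneous term as a quadrilinear integral
\begin{align*}
\sum\int_{I\times\mathbb R^d}V_b\ast(\varphi_1^\dagger\beta\phi_2)\,\langle\beta\psi_3,u_4\rangle\,dt\,dx,
\end{align*}
with each summation index labelling a dyadic frequency/angular/modulation block of one of the four factors. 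Splitting into the standard frequency interaction regimes (High-High to Low, Low-High to High, High-High to High) together with the relative modulation regimes reduces the problem to bounding the elementary blocks already treated in the previous papers.

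The new ingredient is the following interpolation. For each elementary block the trilinear analysis of \cite{chohlee,cholee} furnishes a baseline bound of the form $C\prod_{i=1}^{3}\|\cdot_i\|_{V^2_\theta}$ weighted by an exponential gain in the dyadic separations. Alongside this baseline, I would derive an alternative block bound in which a single $V^2_\theta$-factor, say the one carrying $\psi$, is replaced by $\lambda_\psi^{1/2}\|\Lambda^{-1/2}P_{\lambda_\psi}H_{N_\psi}\psi\|_{L^4_{t,x}}$; this is legitimate because in the original argument that factor entered via the Strichartz embedding $\|P_\lambda H_N\psi\|_{L^4_{t,x}}\lesssim \lambda^{1/2}\|P_\lambda H_N \psi\|_{V^2_\theta}$, and the alternative bound simply refrains from invoking Strichartz at that one step. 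Taking the geometric mean of the two block bounds with weights $1-\eta$ and $\eta$ and then square-summing over the dyadic parameters produces the factor $\|\psi\|_{\mathbf D^{-1/2,\sigma}}^{\eta}\|\psi\|_{F^{0,\sigma}}^{1-\eta}$, at the cost of sacrificing an $\eta$-fraction of the exponential dyadic gain. Applying the same device symmetrically to $\varphi$ and $\phi$ and summing the three resulting bounds yields \eqref{0-tri}, provided $\eta$ is chosen small enough that the residual $(1-\eta)$-fraction of the gain still closes each geometric series.

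The principal obstacle will be the intermediate modulation regime, in which the largest modulation is comparable to some frequency strictly between $\lambda_{\min}$ and $\lambda_{\max}$. Here the null-form decomposition of the bilinear form $\varphi^\dagger\beta\phi$ delivers only a marginal symbolic gain, so the baseline exponential decay in the dyadic separations is slim, and one must simultaneously exploit the spherical Littlewood-Paley orthogonality in $N$ and the modulation orthogonality to absorb the $\eta$-loss; this is what forces the constraint $\eta<\sigma/10$. The fractional Leibniz bound \eqref{frac-tri} then follows by the familiar device $\lambda_{\max}^s\lesssim \lambda_1^s+\lambda_2^s+\lambda_3^s$: placing the $s$-weight on whichever input realises the top frequency reassembles that input's dyadic sum into its $F^{s,\sigma}$-norm, while the other two factors continue to carry the interpolated dispersive/$F^{0,\sigma}$-bound exactly as in \eqref{0-tri}, yielding the three summands on the right-hand side.
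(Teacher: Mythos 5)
Your overall scheme is the same as the paper's: dualize via $V^2$--$U^2$ theory (Lemma \ref{lem-v-dual}), decompose into frequency, angular, and modulation blocks, and at each block interpolate between the baseline $V^2$-bound used in \cite{chohlee,cholee} and an alternative bound that skips the $L^4$-Strichartz step so that an $L^4_{t,x}$-norm of that block survives. You also correctly identify the intermediate modulation window $\lambda_{\min}\ll h\ll\lambda_{\max}$ as the hardest case (the paper handles it through the $\mathcal A_0,\mathcal A_1,\mathcal A_2$ decomposition and the square-summed $L^4$ estimates such as \eqref{phi-1}, \eqref{square-sum-q}, \eqref{high-mod-eta}), and the role $\eta<\sigma/10$ plays in absorbing the loss via the spherical gains.

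The gap is in how you assemble the block bounds into \eqref{0-tri}. You propose to replace only one $V^2$-factor at a time by its $L^4$ counterpart, form the geometric mean, repeat for the other two inputs, and then ``sum the three resulting bounds.'' That produces a bound of the form
\[
\sum_{j}\|u_j\|_{\mathbf D^{-1/2,\sigma}}^\eta\|u_j\|_{F^{0,\sigma}}^{1-\eta}\!\prod_{k\neq j}\|u_k\|_{F^{0,\sigma}},
\]
which is the structure of \eqref{frac-tri} at $s=0$, \emph{not} the product $(\prod_j\|u_j\|_{\mathbf D^{-1/2,\sigma}})^\eta(\prod_j\|u_j\|_{F^{0,\sigma}})^{1-\eta}$ claimed in \eqref{0-tri}. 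The two are not interchangeable: if one $\|u_j\|_{\mathbf D^{-1/2,\sigma}}$ vanishes, the product is zero while your sum is not. This distinction is load-bearing in the downstream local well-posedness argument (Proposition \ref{lwp}): when all three inputs are the linear evolution $\psi_\ell$ with $\|\psi_\ell\|_{\mathbf D^{-1/2,\sigma}}\le\mathtt a$ and $\|\psi_\ell\|_{F^{0,\sigma}}\lesssim\mathsf A$, the product form yields $\epsilon_0^3=(\mathtt a^\eta\mathsf A^{1-\eta})^3$, whereas your sum form yields $\sim\mathtt a^\eta\mathsf A^{1-\eta}\cdot\mathsf A^2$, which does not close the contraction for $\mathsf A$ large. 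The fix is simple and is what the paper actually does: at each dyadic block, pair the baseline with the \emph{single} alternative in which every $V^2$-factor (indeed, all four, including the dual test function) is simultaneously replaced by its $L^4_{t,x}$-counterpart, and take the geometric mean once; this directly yields the product form.
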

\noindent In fact the multilinear estimates \eqref{frac-tri} follow from the estimate \eqref{0-tri}, and hence we only give the proof of \eqref{0-tri} in this paper.
By an application of Proposition \ref{main-tri-est} we are able to prove local well-posedness result.
\begin{prop}[Local well-posedness]\label{lwp}
	Let $\sigma > 0$ for $d = 3$ and $\sigma =0$ for $d= 2$. Then there exist $0<\eta<\frac\sigma{10}$ and $C>1$, such that if
	$$
	{\mathsf A}>0,\quad 0 < \mathtt a < (C{\mathsf A}^{1-\eta})^{-\frac1\eta},
	$$
	and $I\subset\mathbb R$ is a left-closed interval, then for any initial time $t_0\in I$, and any data $\psi_0\in L^{2, \sigma}$ satisfying
	$$
	\|\psi_0\|_{L^{2, \sigma}} < {\mathsf A},\quad \sum_{\theta \in \{+, -\}}\|e^{-\theta i(t-t_0)\Lambda}\psi_{0, \theta}\|_{\mathbf D^{-\frac12, \sigma}(I)} < \mathtt a,
	$$
	there exists a unique $L^{2, \sigma}$-strong solution $\psi$ of \eqref{main-eq} on $I$ with $\psi(t_0)=\psi_0$. Moreover, the data-to-solution map is Lipschitz-continuous into $F^{0,\sigma}(I)$ and we have the bounds
	$$
	\bigg\|\psi - \sum_{\theta \in \{+, -\}}e^{-\theta i(t-t_0)\Lambda}\psi_{0, \theta}\bigg\|_{F^{0,\sigma}(I)} \le C({\mathtt a}^\eta {\mathsf A}^{1-\eta})^3.
	$$
\end{prop}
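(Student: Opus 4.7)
The plan is a contraction-mapping argument for the Duhamel formulation of \eqref{h-w} in the Banach space $F^{0,\sigma}(I)$. Writing
$$\psi^{\mathrm{free}}(t) := \sum_{\theta\in\{+,-\}}e^{-\theta i(t-t_0)\Lambda}\psi_{0,\theta},\qquad \mathcal N(\varphi,\phi,\psi)_\theta := \mathcal I_\theta\bigl[V_b*(\varphi^\dagger\beta\phi)\beta\psi\bigr](t_0;\,\cdot),$$
an $L^{2,\sigma}$-solution of \eqref{main-eq} on $I$ corresponds to a fixed point of $\Phi(\psi) := \psi^{\mathrm{free}} + \mathcal N(\psi,\psi,\psi)$. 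Three linear ingredients underlie the argument: the basic energy estimate for free half-Klein-Gordon waves in $V^2_\theta$, applied after the $P_\lambda H_N$-decomposition, yields $\|\psi^{\mathrm{free}}\|_{F^{0,\sigma}(I)} \le C_0\mathsf A$; the hypothesis directly bounds $\|\psi^{\mathrm{free}}\|_{\mathbf D^{-1/2,\sigma}(I)} \le \mathtt a$; and the Strichartz-type embedding $F^{0,\sigma}(I) \hookrightarrow \mathbf D^{-1/2,\sigma}(I)$, with some constant $C_1$, lets one transfer $F^{0,\sigma}$-control of the Duhamel remainder into $\mathbf D^{-1/2,\sigma}$-control.

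Introduce the interpolation quantity $X(\psi) := \|\psi\|_{\mathbf D^{-1/2,\sigma}(I)}^\eta\|\psi\|_{F^{0,\sigma}(I)}^{1-\eta}$ and the smallness parameter $\mathsf r := \mathtt a^\eta\mathsf A^{1-\eta}$, so that $X(\psi^{\mathrm{free}}) \le C_0^{1-\eta}\mathsf r$ and the hypothesis $\mathtt a < (C\mathsf A^{1-\eta})^{-1/\eta}$ reads $C\mathsf r < 1$. The trilinear bound \eqref{0-tri} takes the clean form
$$\|\mathcal N(\varphi,\phi,\psi)\|_{F^{0,\sigma}(I)} \le C\,X(\varphi)\,X(\phi)\,X(\psi).$$
I would run the Picard iteration on the closed set
$$\mathcal S := \bigl\{\psi \in F^{0,\sigma}(I) : \|\psi-\psi^{\mathrm{free}}\|_{F^{0,\sigma}(I)} \le K\mathsf r^3,\ \ X(\psi) \le 2C_0^{1-\eta}\mathsf r\bigr\},$$
equipped with the induced $F^{0,\sigma}$-metric, where $K$ and $C$ are chosen sufficiently large. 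For $\psi\in\mathcal S$, the trilinear estimate produces $\|\mathcal N(\psi,\psi,\psi)\|_{F^{0,\sigma}} \le C'\mathsf r^3$ and, through the $F^{0,\sigma}\hookrightarrow\mathbf D^{-1/2,\sigma}$ embedding applied to the Duhamel remainder, $X(\mathcal N(\psi,\psi,\psi)) \le C''\mathsf r^3$; the smallness $C\mathsf r < 1$ (with $C$ large) then secures $\Phi(\mathcal S)\subset\mathcal S$. For contraction, I would expand $\mathcal N(\psi_1,\psi_1,\psi_1) - \mathcal N(\psi_2,\psi_2,\psi_2)$ telescopically into three trilinear pieces with one factor $\psi_1-\psi_2$, and bound $X(\psi_1-\psi_2) \le C_1^\eta\|\psi_1-\psi_2\|_{F^{0,\sigma}}$ to obtain
$$\|\Phi(\psi_1)-\Phi(\psi_2)\|_{F^{0,\sigma}(I)} \le C'''\mathsf r^2\,\|\psi_1-\psi_2\|_{F^{0,\sigma}(I)},$$
whose prefactor lies below $1/2$ by smallness. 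Banach's fixed-point theorem then produces the unique $\psi\in\mathcal S$ with the claimed bound on $\|\psi - \psi^{\mathrm{free}}\|_{F^{0,\sigma}}$.

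Upgrading $\psi$ to a strong $L^{2,\sigma}$-solution proceeds by mollification: take Schwartz $\psi_0^{(n)}\to\psi_0$ in $L^{2,\sigma}$, apply the higher-regularity version of the iteration using the fractional Leibniz estimate \eqref{frac-tri} to propagate $H^m$-regularity, and invoke the Lipschitz property (proved next) to pass to the limit in $F^{0,\sigma}(I)$. Lipschitz dependence on the initial data itself follows from the same telescoping-trilinear bound applied between two solutions $\psi^{(1)},\psi^{(2)}$ with data $\psi_0^{(1)},\psi_0^{(2)}$ in the smallness regime, yielding
$$\|\psi^{(1)}-\psi^{(2)}\|_{F^{0,\sigma}(I)} \lesssim \|\psi_0^{(1)} - \psi_0^{(2)}\|_{L^{2,\sigma}},$$
once the $\mathsf r^2$-prefactor is absorbed onto the left. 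The main technical obstacle is not the contraction mechanism itself, but the bookkeeping needed to verify that both components of the constraint defining $\mathcal S$ are simultaneously preserved under $\Phi$; the quantitative form of the hypothesis $\mathtt a < (C\mathsf A^{1-\eta})^{-1/\eta}$ is precisely what makes this closure go through, by ensuring that the interaction between the rough dispersive norm $\mathbf D^{-1/2,\sigma}$ and the iteration space $F^{0,\sigma}$ remains controlled.
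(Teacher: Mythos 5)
Your contraction scheme is structurally the paper's, and several pieces are exactly right — $\mathsf r = \mathtt a^\eta\mathsf A^{1-\eta}$ as the smallness parameter, the $L^{2,\sigma}$-to-$V^2_\theta$ energy estimate, the embedding $F^{0,\sigma}\hookrightarrow\mathbf D^{-1/2,\sigma}$, the telescoping for the difference — but there is a genuine gap in the self-map step. You run the iteration on a set $\mathcal S$ constrained by $X(\psi) := \|\psi\|_{\mathbf D^{-1/2,\sigma}}^\eta\|\psi\|_{F^{0,\sigma}}^{1-\eta} \le 2C_0^{1-\eta}\mathsf r$ and assert (without verification) that this constraint is stable under $\Phi$. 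It is not. The quantity $X$ is not subadditive: $X(\psi^{\mathrm{free}} + \mathcal N)$ is controlled by four cross terms, and the problematic one is
$$\|\mathcal N\|_{\mathbf D^{-1/2,\sigma}}^\eta\,\|\psi^{\mathrm{free}}\|_{F^{0,\sigma}}^{1-\eta}\ \lesssim\ (\mathsf r^3)^\eta\,\mathsf A^{1-\eta}.$$
Under the hypothesis $\mathtt a < (C\mathsf A^{1-\eta})^{-1/\eta}$, which only says $\mathsf r<1/C$, one has $\mathsf r^3 > \mathtt a$ whenever $\mathtt a < 1 < \mathsf A$ (since $\mathtt a^{3\eta-1}\mathsf A^{3(1-\eta)} > 1$ for $\eta<1/3$). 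Hence $(\mathsf r^3)^\eta\mathsf A^{1-\eta}$ can exceed $\mathsf r = \mathtt a^\eta\mathsf A^{1-\eta}$ by an arbitrarily large factor $(\mathsf r^3/\mathtt a)^\eta$ as $\mathsf A\to\infty$. So $X(\Phi(\psi))$ is \emph{not} $\le 2C_0^{1-\eta}\mathsf r$, the set $\mathcal S$ is not invariant, and the Banach fixed-point argument as written does not go through.

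The paper sidesteps this precisely because it never tries to propagate $X$ of the whole solution. It defines $\mathcal X = \{\psi : \|\psi_\mathcal N\|_{F^{0,\sigma}}\le\epsilon_0\}$ with $\epsilon_0 = \mathsf r$ and only one constraint, then splits $\psi = \psi_\ell + \psi_\mathcal N$ and expands the cubic nonlinearity by multilinearity into the eight terms $\mathcal N(\psi_{a},\psi_b,\psi_c)$, $a,b,c\in\{\ell,\mathcal N\}$. The trilinear estimate \eqref{0-tri} is applied termwise. For a factor $\psi_\ell$ one uses the hypothesis $\|\psi_\ell\|_{\mathbf D^{-1/2,\sigma}}\le\mathtt a$ together with $\|\psi_\ell\|_{F^{0,\sigma}}\lesssim\mathsf A$, yielding a factor $\lesssim\mathtt a^\eta\mathsf A^{1-\eta}=\mathsf r$; for a factor $\psi_\mathcal N$ one uses $\|\psi_\mathcal N\|_{\mathbf D^{-1/2,\sigma}}\lesssim\|\psi_\mathcal N\|_{F^{0,\sigma}}\le\epsilon_0$ (via \eqref{energy-str-est}), yielding a factor $\lesssim\epsilon_0=\mathsf r$. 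Either way each of the three factors is $\lesssim\mathsf r$, so $\|\mathcal N(\psi,\psi,\psi)\|_{F^{0,\sigma}}\lesssim\mathsf r^3\le\epsilon_0$ once $\mathsf r$ is small, and $\mathcal X$ is invariant. To repair your argument you should drop the second constraint entirely, carry only the $F^{0,\sigma}$-bound on $\psi-\psi^{\mathrm{free}}$, and apply the trilinear estimate after the $\psi = \psi^{\mathrm{free}}+\psi_\mathcal N$ decomposition rather than to the undecomposed triple. The same care is needed in the Lipschitz step: the difference $\mathcal N(\psi_1,\psi_1,\psi_1)-\mathcal N(\psi_2,\psi_2,\psi_2)$ must also be expanded in the $\ell/\mathcal N$ decomposition before applying \eqref{0-tri}.
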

\begin{proof}[Proof of Proposition \ref{lwp}]
Let us set
$$
\epsilon_0 = \mathtt a^\eta {\mathsf A}^{1-\eta}.
$$
For simplicity, we write
$$	
\psi_{\ell} := \sum_\theta e^{-\theta i(t-t_0)\Lambda}\psi_{0, \theta},\quad \psi_{\mathcal N} := \psi - \psi_\ell.
$$
We define the set $\mathcal X\subset F^{0,\sigma}(I)$ as
$$
\mathcal X:=\{ \psi\in F^{0,\sigma}(I) : \|\psi_\mathcal N\|_{F^{0,\sigma}(I)} \le \epsilon_0\}.
$$
We aim to construct a fixed point of the map $\Phi:\mathcal X\rightarrow\mathcal X$ defined as
$$
\Phi(\psi) = \psi_\ell + \sum_\theta\mathcal I_{\theta}[V_b*(\psi^\dagger\beta\psi)\beta\psi](t_0; \cdot).
$$
To do this, let $\psi\in\mathcal X$. Then after decomposing the product
\begin{align*}
(\varphi^\dagger\phi)\psi & = \varphi^\dagger_\ell\phi_\ell\psi_\ell+\varphi_\mathcal N^\dagger \phi_\ell\psi_\ell +\varphi_\ell^\dagger\phi_\mathcal N\psi_\ell+\varphi_\mathcal N^\dagger\phi_\mathcal N\psi_\ell \\
&\qquad + \varphi^\dagger_\ell\phi_\ell\psi_\mathcal N+\varphi_\mathcal N^\dagger \phi_\ell\psi_\mathcal N +\varphi_\ell^\dagger\phi_\mathcal N\psi_\mathcal N+\varphi_\mathcal N^\dagger\phi_\mathcal N\psi_\mathcal N
\end{align*}
by an application of Proposition \ref{main-tri-est} together with bounds
\begin{align}
\|\psi_{\ell}\|_{F^{0,\sigma}(I)} & \lesssim \|\psi_0\|_{L^{2, \sigma}},\label{lin-est} \\
\|\psi\|_{L^\infty_tL^{2,\sigma}(I\times\mathbb R^3)} + \|\psi\|_{\mathbf D^{-\frac12, \sigma}} & \lesssim \|\psi\|_{F^{0,\sigma}(I)},	\label{energy-str-est}
\end{align}
we see that there exists $C^*>0$ such that
\begin{align}
\|\mathcal I_\theta [V_b*(\varphi^\dagger\beta\phi)\beta\psi](t_0; \cdot)\|_{F^{0,\sigma}(I)} \le C^* (\epsilon_0)^3.	
\end{align}

To show that $\Phi$ is a contraction on $\mathcal X$, for $\psi,\psi'\in\mathcal X$, we apply Proposition \ref{main-tri-est} and \eqref{lin-est}, \eqref{energy-str-est} to obtain
\begin{align*}
&\|\mathcal I_{\theta}[V_b*(\varphi^\dagger\beta\phi)\beta\psi](t_0; \cdot) - \mathcal I_{\theta}[V_b*(\varphi'^\dagger\beta\phi')\beta\psi'](t_0; \cdot)\|_{F^{0,\sigma}(I)}	\\
& \le C^* (\epsilon_0)^2(\|\varphi-\varphi'\|_{F^{0,\sigma}(I)}+\|\psi-\psi'\|_{F^{0,\sigma}(I)}+\|\phi-\phi'\|_{F^{0,\sigma}(I)}).
\end{align*}
Then we set
$$
3C^*(\epsilon_0)^2 \le \frac12.
$$
That is,
$$
\mathtt a^\eta {\mathsf A}^{1-\eta} \le (6C^*)^{-\frac12}.
$$
We conclude that the standard contraction principle implies that there exists a unique fixed point in $\mathcal X$, and the solution map $\Phi$ depends continuously on the initial datum. Now we set $\mathcal X_s\subset F^{s,\sigma}$ as
$$
\mathcal X_s := \{ \psi\in F^{s,\sigma}(I) : \|\psi\|_{F^{s,\sigma}(I)}\le \epsilon_s \},
$$
where
$$
\epsilon_s = \|\psi_0\|_{H^s(\mathbb R^d)}.
$$
By the multilinear estimates \eqref{frac-tri}, we have
\begin{align}
	\|\mathcal I_{\theta}[V_b*(\varphi^\dagger\beta\phi)\beta\psi](t_0; \cdot)\|_{F^{s,\sigma}(I)} \le C^*(s)\epsilon_0^2\epsilon_s.
\end{align}
We repeat the similar argument to obtain a unique fixed point in $\mathcal X_s$ and then fix $$C=\sup_{0\le s\le10}C^*(s).$$  Suppose that $0<\mathtt a<(C\mathsf A^{1-\eta})^{-\frac1\eta}$. Then we get a unique solution $\psi\in\mathcal X$, which depends continuously on the data. Now we approximate the data with functions in $H^{10,\sigma}$ and applying the above argument with $s=10$, we obtain a sequence of solutions in $\mathcal X_{10}$, which converge to $\psi$. Thus $\psi$ is an $L^{2, \sigma}$-strong solution to \eqref{main-eq}.
\end{proof}
%%%%%%%%%%%%%%%%%%%%%%%%%%%%%%%%%%%%%%%%%%%%%%%%%%%%%%%%%%%%%%%%%%%%%%%%%%%%%%%%%%%%%%%%%%%%%%%%%%%%%%%%%%%%%%%%%%%%%%%%%%%%%%%%%%%%%%%%%%%%%%%%%%%%%%%%%%%%%%%%%%%%%%%%%%%%%%%%%%%%%%%%%%%%%%%%%%%%%%%%%%%%%%%%%%%%%%%%%%5
%%%%%%%%%%%%%%%%%%%%%%%%%%%%%%%%%%%%%%%%%%%%%%%%%%%%%%%%%%%%%%%%%%%%%%%%%%%%%%%%%%%%%%%%%%%%%%%%%%%%%%%%%%%%%%%%%%%%%%%%%%%%%%%%%%%%%%%%%%%%%%%%%%%%%%%%%%%%%%%%%%%%%%%%%%%%%%%%%%%%%%%%%%%%%%%%%%%%%%%%%%%%%%%%%%%%%%%%%%5
\subsection{Conditional regularity}
We shall prove that $L^{2, \sigma}$-solutions belong to $F^{0,\sigma}(I)$ provided that the $L^4_{t,x}$-norm is sufficiently small relative to the data norm $\|\psi(t_0)\|_{L^{2, \sigma}}$.
\begin{thm}\label{data-dis-thm}
Let $\sigma>0$ for $d = 3$ and $\sigma = 0$ for $d = 2$. Then there exists $0<\eta<\frac\sigma{10}$ and $C>1$ such that if ${\mathsf A}>0$ and $I\subset\mathbb R$ is a left-closed interval, $t_0\in I$, and $\psi$ is an $L^{2, \sigma}$-strong solution on $I$ satisfying
$$
\|\psi(t_0)\|_{L^{2, \sigma}} \le {\mathsf A}
$$	
and
$$
\|\psi\|_{\mathbf D^{-\frac12, \sigma}(I)} \le (C(1+{\mathsf A})^{3-\eta})^{-\frac1\eta},
$$
then $\psi\in F^{0,\sigma}(I)$ and we have the bound
$$
\|\psi\|_{F^{0,\sigma}(I)} \le CA.
$$
\end{thm}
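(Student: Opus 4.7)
The plan is a continuity/bootstrap argument run on the smooth classical approximations guaranteed by the strong-solution hypothesis, followed by passage to the limit; the quantitative engine is the trilinear estimate \eqref{0-tri}, whose factor $(\|\cdot\|_{\mathbf D^{-\frac12,\sigma}})^{3\eta}$ is made tiny by the hypothesis on $\delta := \|\psi\|_{\mathbf D^{-\frac12,\sigma}(I)}$. By definition of an $L^{2,\sigma}$-strong solution there is a sequence of classical $H^{10}$-solutions $\psi_n$ of \eqref{main-eq} with $\psi_n \to \psi$ in $L^\infty_t L^{2,\sigma}$ on compact subsets of $I$; for $n$ large we therefore may assume $\|\psi_n(t_0)\|_{L^{2,\sigma}} \le \tfrac{3}{2}{\mathsf A}$ and $\|\psi_n\|_{\mathbf D^{-\frac12,\sigma}(I)} \le 2\delta$. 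Standard high-regularity local theory places each $\psi_n$ in $F^{0,\sigma}(J)$ on short enough $J \ni t_0$, and the usual $V^2$-based arguments make $T \mapsto \|\psi_n\|_{F^{0,\sigma}([t_0,T])}$ continuous on the maximal such $J$.

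Let $K_1$ denote the constant implicit in \eqref{lin-est} and $K_2$ the constant in \eqref{0-tri}. For each such $n$ introduce
\[
T_n^{\ast} := \sup\Big\{\,T \in I : T \ge t_0,\ \psi_n \in F^{0,\sigma}([t_0,T]),\ \|\psi_n\|_{F^{0,\sigma}([t_0,T])} \le 4 K_1 {\mathsf A}\,\Big\}.
\]
For any $t_0 \le T < T_n^{\ast}$, Duhamel's formula combined with \eqref{lin-est} and three copies of \eqref{0-tri} on $[t_0, T]$ yields
\[
\|\psi_n\|_{F^{0,\sigma}([t_0,T])} \le 2 K_1 {\mathsf A} + K_2 \,(2\delta)^{3\eta}(4K_1 {\mathsf A})^{3(1-\eta)}.
\]
The hypothesis $\delta \le (C(1+{\mathsf A})^{3-\eta})^{-1/\eta}$ is comfortably stronger than the sharp threshold $\delta \lesssim {\mathsf A}^{1-2/(3\eta)}$ required to bound the second term by $K_1 {\mathsf A}$ once $C$ is chosen large depending on $K_1, K_2, \eta$. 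Hence $\|\psi_n\|_{F^{0,\sigma}([t_0,T])} \le 3 K_1 {\mathsf A}$ strictly under the bootstrap threshold, so the standard open-closed argument rules out $T_n^{\ast} < \sup I$ and delivers the uniform bound $\|\psi_n\|_{F^{0,\sigma}(I)} \le 3 K_1 {\mathsf A}$.

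Finally, the uniform $F^{0,\sigma}$-bound on $\{\psi_n\}$ together with the $V^2$--$U^2$ duality produces a weak-$*$ accumulation point in $F^{0,\sigma}(I)$; the $L^\infty_t L^{2,\sigma}$-convergence $\psi_n \to \psi$ combined with \eqref{energy-str-est} identifies this limit with $\psi$, and weak-$*$ lower semicontinuity of the $F^{0,\sigma}$-norm then yields $\psi \in F^{0,\sigma}(I)$ with $\|\psi\|_{F^{0,\sigma}(I)} \le 3 K_1 {\mathsf A} \le C {\mathsf A}$. The most delicate point throughout is the technical handling of the non-separable $V^2$-based space: both the continuity of $T \mapsto \|\psi_n\|_{F^{0,\sigma}([t_0,T])}$ and the identification of the weak-$*$ limit require care, and one must verify that the Duhamel identity is inherited from $\psi_n$ by $\psi$ in the limit. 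These are standard but nontrivial issues for $V^2$-based function spaces, and constitute the real technical work after \eqref{0-tri} has closed the bootstrap.
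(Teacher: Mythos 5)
Your route — bootstrap on the classical approximants $\psi_n$ and pass to a weak-$*$ limit — differs from the paper's proof, which runs the bootstrap on the $L^{2,\sigma}$-strong solution $\psi$ itself. The paper's key structural move (which you do not use) is to invoke Proposition~\ref{lwp} together with uniqueness of $L^{2,\sigma}$-strong solutions: on a short interval, the contraction solution lies in $F^{0,\sigma}$ and coincides with $\psi$, so $\psi$ is already known to be in $F^{0,\sigma}$ locally, and then the extension argument proceeds for $\psi$ directly using the hypothesis $\|\psi\|_{\mathbf D^{-1/2,\sigma}(I)}\le\delta$. This makes both approximation and any weak-$*$ compactness unnecessary.

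The genuine gap in your version is the claim that for large $n$ one may assume $\|\psi_n\|_{\mathbf D^{-1/2,\sigma}(I)}\le 2\delta$. This does not follow from the hypothesis that $\psi_n\to\psi$ in $L^\infty_t L^{2,\sigma}$ on compact subintervals. The norm $\mathbf D^{-1/2,\sigma}$ is an $\ell^2_N$-weighted $L^4_t L^4_x$ norm; convergence (or even uniform boundedness) in $L^\infty_t L^{2,\sigma}$ on compacta controls nothing of the kind, since $\Lambda^{-1/2}$ does not map $L^2(\mathbb R^3)$ into $L^4_x$ (one would need three quarters of a derivative in $\mathbb R^3$, not one half), and on an unbounded interval $I$ the time integration makes matters worse still. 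Without this smallness of $\|\psi_n\|_{\mathbf D(I)}$ the trilinear estimate~\eqref{0-tri} applied to $\psi_n$ yields only $\|\psi_n\|_{\mathbf D}\lesssim\|\psi_n\|_{F^{0,\sigma}}\lesssim {\mathsf A}$ via~\eqref{energy-str-est}, which is not small, and the bootstrap does not close. One could try to recover smallness of $\|\psi_n\|_{\mathbf D}$ by appealing to the Lipschitz continuity of the data-to-solution map into $F^{0,\sigma}$ from Proposition~\ref{lwp} — but that requires $\psi_n$ and $\psi$ to coincide with the contraction solutions on short intervals, at which point the detour through $\psi_n$ has simply replicated (with extra steps) the paper's argument of putting $\psi$ itself into $F^{0,\sigma}$ locally. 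In short: replace the approximation-plus-compactness scaffolding with a direct invocation of Proposition~\ref{lwp} (uniqueness identifies $\psi$ with the iterated solution, hence $\psi\in F^{0,\sigma}$ on short intervals), and run the continuation/bootstrap argument on $\psi$, where the hypothesis $\|\psi\|_{\mathbf D^{-1/2,\sigma}(I)}\le\delta$ is available by assumption.
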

\begin{proof}[Proof of Theorem \ref{data-dis-thm}]
We first consider $I=[t_0,t_1)$ with $t_1\le\infty$. Let $\psi$ be an $L^{2, \sigma}$-solution on $I$ and we define
$$
\delta := \|\psi\|_{\mathbf D^{-\frac12, \sigma}}
$$	
and
$$
\mathbf T := \left\{ t_0<T\le t_1 : \sup_{t_0<T'\le T}\|\psi\|_{F^{0,\sigma}([t_0,T'))} \le 2C^*{\mathsf A} \right\}.
$$
The local well-posedness result in Proposition \ref{lwp} implies that $T\in\mathbf T$ provided that $T-t_0$ is sufficiently small, i.e., $\mathbf T \neq\varnothing$. We let
$$
T_{\sup} := \sup\mathbf T.
$$
Our goal is to show that $T_{\sup} = t_1$. We assume that $T_{\sup}<t_1$ and let $T_n\in\mathbf T$ be a sequence converging to $T_{\sup}$. The continuity of the solution $\psi$ at $T_{\sup}$, together with \eqref{energy-str-est} and the definition of $\mathbf T$ implies that
\begin{align*}
\|\psi(T_{\sup})\|_{L^{2, \sigma}} & \le C^* \sup_{t_0<T<T_{\sup}}\|\psi\|_{F^{0,\sigma}([t_0,T))}	\\
& \le (2C^*)^2{\mathsf A}
\end{align*}
Applying Proposition \ref{lwp} again, there exists $n$ and $\epsilon_0>0$ such that for all $0<\epsilon<\epsilon_0$ we have on the interval $[T_n,T_{\sup}+\epsilon)$ the bound
\begin{align*}
\|\psi\|_{F^{0,\sigma}([T_n,T_{\sup}+\epsilon))} & \le 2C^*\|\psi(T_{\sup})\|_{L^{2, \sigma}} \\
& \le (2C^*)^3{\mathsf A}.	
\end{align*}

We now exploit the smallness assumption on the $L^4_{t,x}$ norm. An application of \eqref{energy-str-est}, triangular inequality with respect to the time intervals, together with the trilinear estimates in Proposition \ref{main-tri-est}, and the fact that $\psi$ is an $L^{2, \sigma}$-strong solution on $[t_0,t_1)$ implies that
\begin{align*}
\|\psi\|_{F^{0,\sigma}([t_0,T_{\sup}+\epsilon))} & \le C^*\|\psi(t_0)\|_{L^{2, \sigma}} \\
& + C^*\|\psi\|_{\mathbf D^{-\frac12, \sigma}([t_0,T_{\sup}+\epsilon))}^\eta (\|\psi\|_{F^{0,\sigma}([t_0,T_n))}+\|\psi\|_{F^{0,\sigma}([T_n,T_{\sup}+\epsilon))})^{1-\eta} 	\\
& \qquad\qquad\times ( \|\psi\|_{F^{0,\sigma}([t_0,T_n))}+\|\psi\|_{F^{0,\sigma}([T_n,T_{\sup}+\epsilon))} )^2 \\
& \le C^*{\mathsf A} + \delta^\eta (2C^*)^6{\mathsf A}^2{\mathsf A}^{1-\eta}.
\end{align*}
If we take
$$
\delta \le [ (2C^*)^5 {\mathsf A}^{3-\eta} ]^{-\frac1\eta},
$$
then
$$
\|\psi\|_{F^{0,\sigma}([t_0,T_{\sup}+\epsilon))} \le 2C^*{\mathsf A}.
$$
Consequently, we have $T_{\sup}+\epsilon\in\mathbf T$, which contradicts the assumption $T_{\sup}<t_1$. Therefore, we must have $T_{\sup}=t_1$.
\end{proof}
%%%%%%%%%%%%%%%%%%%%%%%%%%%%%%%%%%%%%%%%%%%%%%%%%%%%%%%%%%%%%%%%%%%%%%%%%%%%%%%%%%%%%%%%%%%%%%%%%%%%%%%%%%%%%%%%%%%%%%%%%%%%%%%%%%%%%%%%%%%%%%%%%%%%%%%%%%%%%%%%%%%%%%%%%%%%%%%%%%%%%%%%%%%%%%%%%%%%%%%%%%%%%%%%%%%%%%%%%%5
%%%%%%%%%%%%%%%%%%%%%%%%%%%%%%%%%%%%%%%%%%%%%%%%%%%%%%%%%%%%%%%%%%%%%%%%%%%%%%%%%%%%%%%%%%%%%%%%%%%%%%%%%%%%%%%%%%%%%%%%%%%%%%%%%%%%%%%%%%%%%%%%%%%%%%%%%%%%%%%%%%%%%%%%%%%%%%%%%%%%%%%%%%%%%%%%%%%%%%%%%%%%%%%%%%%%%%%%%%5
\subsection{Proof of global well-posedness and scattering}
Now we give the proof of Theorem \ref{large-gwp}. We only consider the forward-in-time problem for the system \eqref{main-eq}, since our main system is time reversible. We let $\psi:[t_0,t^*)\times\mathbb R^d\rightarrow\mathbb C^{\tilde d}$ be a forward maximal $L^{2, \sigma}$-solution to \eqref{main-eq} such that
$$
\sup_{t_0\le t<t^*}\|\psi(t)\|_{L^{2, \sigma}} \le {\mathsf A},\quad \|\psi\|_{\mathbf D^{-\frac12, \sigma}([t_0,t^*))} <\infty.
$$
Since the dispersive norm $\|\cdot\|_{\mathbf D^{-\frac12, \sigma}}$ is finite, by the dominated convergence theorem, we see that for every $\delta>0$, there exists an interval $I=[t_1,t^*)$ with $t_1<t^*$ such that
$$
\|\psi\|_{\mathbf D^{-\frac12, \sigma}(I)} \le\delta.
$$
In particular, choosing $\delta$ sufficiently small, depending only on ${\mathsf A}$, an application of Theorem \ref{data-dis-thm} implies that $\psi\in F^{0,\sigma}(I)$. Therefore, by the existence of left limits in $V^2$, there exists $\psi^\infty \in L^{2, \sigma}$ such that
$$
\lim_{t\rightarrow t^*}\|\psi(t) - \psi_\ell^\infty \|_{L^{2, \sigma}} = 0,
$$
where $\psi_\ell^\infty = \sum_{\theta \in \{+, -\}}e^{-\theta i(t-t_0)\Lambda}\Pi_\theta\psi^\infty$.
Then the local theory, together with the definition of maximal $L^{2, \sigma}$-solution implies  $t^*=\infty$. In consequence, the solution $\psi$ exists globally in time and scatters as $t\rightarrow\infty$.
%%%%%%%%%%%%%%%%%%%%%%%%%%%%%%%%%%%%%%%%%%%%%%%%%%%%%%%%%%%%%%%%%%%%%%%%%%%%%%%%%%%%%%%%%%%%%%%%%%%%%%%%%%%%%%%%%%%%%%%%%%%%%%%%%%%%%%%%%%%%%%%%%%%%%%%%%%%%%%%%%%%%%%%%%%%%%%%%%%%%%%%%%%%%%%%%%%%%%%%%%%%%%%%%%%%%%%%%%%5
%%%%%%%%%%%%%%%%%%%%%%%%%%%%%%%%%%%%%%%%%%%%%%%%%%%%%%%%%%%%%%%%%%%%%%%%%%%%%%%%%%%%%%%%%%%%%%%%%%%%%%%%%%%%%%%%%%%%%%%%%%%%%%%%%%%%%%%%%%%%%%%%%%%%%%%%%%%%%%%%%%%%%%%%%%%%%%%%%%%%%%%%%%%%%%%%%%%%%%%%%%%%%%%%%%%%%%%%%%5

\section{Preliminaries for Proposition \ref{main-tri-est}}\label{pre}
This section is devoted to introducing preliminaries for Proposition \ref{main-tri-est}, which play a crucial role in the proof of our main result.
\subsection{Multipliers}\label{multi}
We define $\mathcal Q_\mu$ to be a finitely overlapping collection of cubes of diameter $\frac{\mu}{1000}$ covering $\mathbb R^d$, and let $\{ \rho_{\mathsf q}\}_{\mathsf q\in\mathcal Q_\mu}$ be a corresponding subordinate partition of unity.
For $\mathsf q\in\mathcal Q_\mu$, $h\in 2^{\mathbb Z}$ let
$$
P_{\mathsf q} = \rho_{\mathsf q}(-i\nabla),\quad C^{\theta}_h = \rho\left(\frac{|-i\partial_t + \theta\Lambda|}{h}\right).
$$

We define $C^\theta_{\le h}=\sum_{\delta\le h}C^\theta_\delta$ and $C^\theta_{\ge h}$ is defined in the similar way. For simplicity we also write $C^+_h = C_h$.
Given $0 < \mathtt r \lesssim1$, we define $\mathcal C_{\mathtt r}$ to be a collection of finitely overlapping caps of radius ${\mathtt r}$ on the sphere $\mathbb S^2$. If $\kappa\in\mathcal C_{\mathtt r}$, we let $\omega_\kappa$ be the centre of the cap $\kappa$. Then we define $\{\rho_\kappa\}_{\kappa\in\mathcal C_{\mathtt r}}$ to be a smooth partition of unity subordinate to the conic sectors $\{ \xi\neq0 , \frac{\xi}{|\xi|}\in\kappa \}$ and denote the angular localisation Fourier multipliers by
$
R_\kappa = \rho_\kappa(-i\nabla).
$

\subsection{Analysis on the sphere}\label{an-sph}
We introduce some basic facts from harmonic analysis on the unit sphere. The most of ingredients can be found in \cite{candyherr, ster}. We also refer the readers to \cite{steinweiss} for more systematic introduction to the spherical harmonics.
Since $-\Delta_{\mathbb S^2}y_{\ell, n} = \ell(\ell+1)y_{\ell, n}$, by orthogonality one can readily get
$$\|\Lambda_{\mathbb S^2}^\sigma f\|_{L^2_\omega({\mathbb S^2})} \approx \left\|\sum_{N\in2^{\mathbb N}\cup\{0\}}N^\sigma H_Nf\right\|_{L^2_\omega({\mathbb S^2})}.$$

\begin{lem}[Lemma 7.1 of \cite{candyherr}]\label{sph-ortho}
Let $N\ge1$. Then $H_N$ is uniformly bounded on $L^p(\mathbb R^3)$ in $N$, and $H_N$ commutes with all radial Fourier multipliers. Moreover, if $N'\ge1$, then either $N\approx N'$ or
$$
H_N\Pi_\theta H_{N'}=0,
$$	
where
$
\Pi_{\theta} :=\frac{1}{2}\left(\mathbb I + \theta \Lambda^{-1}\Big[\alpha^x \cdot (-i\nabla) + \beta\Big]\right),
$
with $\theta\in\{+,-\}$.
\end{lem}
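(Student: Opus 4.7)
The plan is to address each of the three assertions separately, since they rely on distinct ingredients from harmonic analysis on $\mathbb S^2$ and only the last is specific to the Dirac projector.

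First, for the uniform $L^p$ bound, I would write $H_N$ as a convolution operator on the sphere applied fiberwise in the radial variable. Using the addition formula for spherical harmonics, its kernel takes the form $K_N(\omega\cdot\omega') = \sum_{\ell} \rho(\ell/N)\, \tfrac{2\ell+1}{4\pi} P_\ell(\omega\cdot\omega')$, where $P_\ell$ is the Legendre polynomial of degree $\ell$. Smoothness of $\rho$ together with classical estimates on Ces\`aro-/Abel-summed Legendre expansions yields $\|K_N\|_{L^1(\mathbb S^2)} \lesssim 1$ uniformly in $N$. Young's inequality on $\mathbb S^2$ combined with Fubini in polar coordinates upgrades this to the desired $L^p(\mathbb R^3)$ estimate.

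Second, for commutation with radial Fourier multipliers, I would invoke the joint spherical-harmonic / Hankel decomposition of the Fourier transform on $\mathbb R^3$: if $f(x) = \sum_{\ell,n} f_{\ell,n}(|x|)\, y_{\ell,n}(x/|x|)$, then $\widehat f(\xi) = \sum_{\ell,n} (-i)^\ell \mathcal H_\ell[f_{\ell,n}](|\xi|)\, y_{\ell,n}(\xi/|\xi|)$, where $\mathcal H_\ell$ is the Hankel transform of order $\ell$. A radial multiplier $m(|\nabla|)$ acts as multiplication by $m(|\xi|)$ on the Fourier side and therefore preserves every $(\ell,n)$-sector, while $H_N$ acts only on the $\ell$-label through the factor $\rho(\ell/N)$; the two operations consequently commute.

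The substantive step is the orthogonality $H_N \Pi_\theta H_{N'} = 0$ when $N \not\approx N'$. Decompose
$$
\Pi_\theta = \tfrac12 \mathbb I + \tfrac{\theta}{2}\Lambda^{-1}\alpha^j(-i\partial_j) + \tfrac{\theta}{2}\Lambda^{-1}\beta.
$$
The identity and the constant matrix $\beta$ are scalar in the angular variable, and $\Lambda^{-1}$ is radial, hence commutes with $H_N$ by part two; those two pieces therefore reduce to compositions of the form $H_N H_{N'}$ (tensored with a constant matrix), which vanish by angular orthogonality whenever $N\not\approx N'$. The nontrivial term is $\Lambda^{-1}\alpha^j(-i\partial_j)$: moving $\Lambda^{-1}$ through $H_N$, it suffices to analyse $H_N\, \alpha^j(-i\partial_j)\, H_{N'}$. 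On the Fourier side this operator is multiplication by $\alpha^j\xi_j = |\xi|\,(\alpha\cdot\omega)$ with $\omega = \xi/|\xi|$. Each coordinate $\omega_j$ is a degree-one spherical harmonic, and the classical tensor product identity for the rotation group, $Y_1 \otimes Y_\ell \subset Y_{\ell-1}\oplus Y_{\ell+1}$, implies that multiplication by $\omega_j$ shifts the angular degree by exactly $\pm 1$. Consequently $\alpha^j(-i\partial_j) H_{N'}$ has angular support confined to degrees in $[N'/2 - 1,\, 2N' + 1]$, a range disjoint from the support $[N/2, 2N]$ of $H_N$ once $N\not\approx N'$ with a margin of a small dyadic factor; the outer $H_N$ therefore annihilates it.

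The only genuine obstacle is the degree-shift identity in the last step, together with carefully tracking the margin by which the angular supports separate, so that the equivalence relation $N\approx N'$ is taken loose enough to absorb the $\pm 1$ shift. Everything else is standard bookkeeping with harmonic analysis on $\mathbb S^2$.
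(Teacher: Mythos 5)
This lemma is cited from Candy--Herr without proof, so there is no in-paper argument to compare against; your task is effectively to reconstruct the argument behind the citation. With that caveat, your reconstruction is essentially correct and follows the standard route: uniform $L^p$ boundedness via an $L^1$ bound on the angular Littlewood--Paley kernel (Fejér/Ces\`aro-type estimates on $\mathbb S^2$), commutation with radial multipliers via the Bochner--Hecke/Hankel decomposition of the Fourier transform, and angular near-orthogonality from the fact that $\alpha^j(-i\partial_j)\Lambda^{-1}$ is, up to a radial factor, multiplication on the Fourier side by $\alpha^j\omega_j$, which shifts the spherical-harmonic degree by exactly $\pm 1$ because $\omega_j Y_\ell \subset Y_{\ell-1}\oplus Y_{\ell+1}$ by the parity/Clebsch--Gordan decomposition. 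This is the same mechanism Candy--Herr use.

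Two minor points worth making explicit if you were to write this out formally. First, you correctly decompose $\Pi_\theta$, but you should note that $H_N$ acts componentwise on the spinor index and is defined on physical-side angular variables while your shift argument works on the Fourier side; the transfer is precisely what part (2) provides (the Hankel transform preserves each $(\ell,n)$-sector), so one needs to invoke it before reading off the degree shift from $\alpha^j\xi_j$. Second, for the $L^p(\mathbb R^3)$ claim, the correct statement is that $\|f\|_{L^p(\mathbb R^3)}^p = \int_0^\infty \|f(r\,\cdot)\|_{L^p(\mathbb S^2)}^p r^2\,dr$; since $H_N$ acts fiberwise in $r$, the uniform $L^p(\mathbb S^2)$ bound transfers exactly as you say, but one should be careful not to claim the $r^2$ weight plays any role. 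Neither point affects the validity of your argument.
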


\subsection{Adapted function spaces}\label{ftn-sp}
Let $\mathcal Z =\left\{ \{t_k\}_{k=0}^K : t_k\in\mathbb R, t_k<t_{k+1} \right\}$ be the set of increasing sequences of real numbers.
We define the $2$-variation of $v$ to be
$$
|v|_{V^2} = \sup_{ \{t_k\}_{k=0}^K\in\mathcal I } \left( \sum_{k=0}^K\|v(t_k)-v(t_{k-1})\|_{L^2_x}^2 \right)^\frac12
$$
Then the Banach space $V^2$ can be defined to be all right continuous functions $v:\mathbb R\rightarrow L^2_x$ such that the quantity
$$
\|v\|_{V^2} = \|v\|_{L^\infty_tL^2_x} + |v|_{V^2}
$$
is finite. Set $\|u\|_{V^2_\theta}=\|e^{-\theta it\Lambda}u\|_{V^2}$. We recall basic properties of $V^2_\theta$ space from \cite{candyherr, candyherr1, haheko}. In particular,  we use the following lemma to prove the scattering result.
\begin{lem}[Lemma 7.4 of \cite{candyherr}]\label{v-scatter}
	Let $u\in V^2_\theta$. Then there exists $f\in L^2_x$ such that $\|u(t)-e^{-\theta it\Lambda}f\|_{L^2_x}\rightarrow0$ as $t\rightarrow\pm\infty$.
\end{lem}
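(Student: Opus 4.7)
The plan is to unwrap the $\theta$-twist in the definition of $V^2_\theta$ and reduce to the analogous fact for $V^2$. More precisely, if $u \in V^2_\theta$ then, in view of the identity $\|u\|_{V^2_\theta} = \|e^{-\theta it\Lambda} u\|_{V^2}$, the profile
$$v(t) := e^{\theta it \Lambda} u(t)$$
(with the appropriate sign convention) lies in $V^2$, i.e.\ it is right continuous, bounded in $L^2_x$, and has finite $2$-variation. The claim $\|u(t) - e^{-\theta it\Lambda} f\|_{L^2_x} \to 0$ is then equivalent to $\|v(t) - f\|_{L^2_x} \to 0$, so it is enough to show that any $v \in V^2$ has strong limits in $L^2_x$ at $\pm\infty$.

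The key step is therefore: \emph{every $v \in V^2$ is Cauchy in $L^2_x$ as $t \to \infty$ (and as $t \to -\infty$)}. I would prove this by contradiction. Suppose that $v$ is not Cauchy at $+\infty$; then there exist $\varepsilon > 0$ and sequences $s_n, t_n \to \infty$ with $\|v(t_n) - v(s_n)\|_{L^2_x} \ge \varepsilon$. Passing to a subsequence, I may arrange
$$s_1 < t_1 < s_2 < t_2 < \cdots < s_K < t_K,$$
and then the finite increasing sequence $\{s_k, t_k\}_{k=1}^K \in \mathcal Z$ produces
$$|v|_{V^2}^2 \ge \sum_{k=1}^{K}\|v(t_k) - v(s_k)\|_{L^2_x}^2 \ge K \varepsilon^2.$$
Letting $K \to \infty$ contradicts $|v|_{V^2} < \infty$. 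The same argument (with the reversed ordering) handles $t \to -\infty$. Consequently there exist limits $f_{\pm} \in L^2_x$ with $v(t) \to f_\pm$ as $t \to \pm\infty$.

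Setting $f := f_+$ (respectively $f_-$) and unwinding the substitution gives
$$\|u(t) - e^{-\theta it\Lambda} f\|_{L^2_x} = \|v(t) - f\|_{L^2_x} \longrightarrow 0,$$
which is the desired scattering statement. Note that the completeness needed to extract $f_\pm$ comes from the fact that $L^2_x$ is a Hilbert space combined with the Cauchy property just established, together with the boundedness $\|v\|_{L^\infty_t L^2_x} < \infty$ built into the $V^2$-norm.

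I do not anticipate a serious obstacle: the only nontrivial point is the subsequence-selection/contradiction argument turning a failure of the Cauchy property into an unbounded $2$-variation. The right continuity and boundedness clauses in the definition of $V^2$ are used only to guarantee that $v$ is an honest $L^2_x$-valued function on which such finite partitions can be evaluated, and play no further role beyond packaging.
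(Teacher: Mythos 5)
Your argument is correct and is the standard proof of this $V^2$-limit fact, which the paper cites from \cite{candyherr} without giving a proof: unwrapping the $\theta$-twist to an honest $V^2$ function and then deriving the Cauchy property at $\pm\infty$ from finite $2$-variation via the packing contradiction is exactly the expected route (cf.\ the $V^p$-limit results in \cite{haheko}). One small point worth noting: the paper's displayed identity $\|u\|_{V^2_\theta}=\|e^{-\theta it\Lambda}u\|_{V^2}$ has a sign inconsistent with the conclusion of the lemma---a literal reading would give $\|u(t)-e^{+\theta it\Lambda}f\|_{L^2_x}\to 0$---so your substitution $v(t)=e^{\theta it\Lambda}u(t)$, together with your explicit hedge about sign conventions, is the correct one, and the unitarity step $\|u(t)-e^{-\theta it\Lambda}f\|_{L^2_x}=\|v(t)-f\|_{L^2_x}$ goes through as you wrote it.
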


The following lemma is on a simple bound in the high-modulation region.
\begin{lem}[Corollary 2.18 of \cite{haheko}]
Let $2\le q\le\infty$. For $h\in2^{\mathbb Z}$ and $\theta \in \{+, -\}$, we have
\begin{align}\label{bdd-high-mod}
\begin{aligned}
\|C^{\theta}_h u\|_{L^q_tL^2_x} \lesssim h^{-\frac1q}\|u\|_{V^2_\theta},\\
\|C^{\theta}_{\ge h}u\|_{L^q_tL^2_x} \lesssim h^{-\frac1q}\|u\|_{V^2_\theta}.
\end{aligned}
\end{align}
\end{lem}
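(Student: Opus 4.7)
The plan is to reduce the modulation projection bound for half-wave adapted $V^2$ functions to a pure time-frequency bound for ordinary $V^2$ functions. Setting $v(t):=e^{\theta it\Lambda}u(t)$ gives $\|v\|_{V^2}=\|u\|_{V^2_\theta}$, and in Fourier space the operator $-i\partial_t+\theta\Lambda$ acts as multiplication by $\tau+\theta\Lambda(\xi)$, which after the substitution $\tau'=\tau+\theta\Lambda(\xi)$ is precisely the time-Fourier variable of $v$. Hence $C_h^\theta u=e^{-\theta it\Lambda}Q_h v$, where $Q_h=\rho(|-i\partial_t|/h)$ is the smooth time-frequency localiser at scale $h$. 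Since $e^{-\theta it\Lambda}$ is unitary on $L^2_x$, the claim reduces to
\begin{equation*}
\|Q_h v\|_{L^q_tL^2_x}\lesssim h^{-\frac1q}\|v\|_{V^2},\qquad 2\le q\le\infty,
\end{equation*}
together with the analogous statement for the tail projection.

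Next I would handle the two endpoints separately and interpolate. For $q=\infty$ the embedding $V^2\hookrightarrow L^\infty_tL^2_x$ combined with boundedness of $Q_h$ on $L^\infty_tL^2_x$ (the kernel of $Q_h$ has $L^1_t$ norm bounded independently of $h$) gives the claim with no factor of $h$. For $q=2$ I would invoke the atomic structure of $V^2$: via the embedding $V^2\hookrightarrow U^p$ for some $p>2$, it suffices to bound $Q_h$ on $U^p$-atoms of the form $v=\sum_k\mathbf 1_{[t_k,t_{k+1})}f_k$ with $\bigl(\sum_k\|f_k\|_{L^2_x}^p\bigr)^{1/p}\le 1$. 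The time Fourier transform of the indicator $\mathbf 1_{[a,b)}$ equals $(e^{-ib\tau}-e^{-ia\tau})/(-i\tau)$, which is bounded pointwise by $\min\{b-a,2/|\tau|\}$; integrating $|\rho(|\tau|/h)|^2$ against this square over the shell $|\tau|\approx h$ yields a per-atom bound of order $h^{-1}\|f_k\|_{L^2_x}^2$, and the $\ell^2$-almost-orthogonality of the step functions on disjoint intervals (which survives the $Q_h$ projection up to a harmless constant) produces the claimed $h^{-1/2}\|v\|_{V^2}$. Interpolating linearly between the two endpoints gives the stated $h^{-1/q}$ decay, and the corresponding bound for $C_{\ge h}^\theta$ follows by dyadic summation since $\sum_{h'\ge h}(h')^{-1/q}\lesssim h^{-1/q}$.

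The main technical obstacle is implementing the atomic argument directly in $V^2$: $V^2$ functions are not literally atomic, so one has to pass through the embedding $V^2\hookrightarrow U^p$ for some $p>2$ (absorbing any resulting logarithmic loss into the interpolation parameter), or approximate by step functions while tracking the jump $\ell^2$-norm carefully. Once this standard $V^2$/$U^p$ technicality is in place, the rest of the argument is a routine Fourier-side computation, and the lemma is ready to be applied with specific choices of $q$ throughout the multilinear analysis that follows.
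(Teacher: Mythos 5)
The paper does not prove this lemma; it cites it verbatim as Corollary 2.18 of \cite{haheko}, so there is no internal proof to compare against. Evaluating your sketch on its own merits: the outer scaffolding is correct and standard. The conjugation $v=e^{\theta it\Lambda}u$ does carry $C_h^\theta$ to a pure time-frequency cut-off $Q_h$ with $\|v\|_{V^2}=\|u\|_{V^2_\theta}$, the $q=\infty$ endpoint follows from $V^2\hookrightarrow L^\infty_tL^2_x$ and the $h$-uniform $L^1_t$ bound on the kernel of $Q_h$, the interpolation between $q=2$ and $q=\infty$ is routine, and the bound for $C^\theta_{\ge h}$ is indeed just a dyadic sum of the single-scale bound.

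The gap is in the $q=2$ endpoint, which is the only nontrivial content. Two distinct problems. First, the embedding $V^2\hookrightarrow U^p$ is only available for $p>2$, and a $U^p$-atom carries the normalisation $\bigl(\sum_k\|f_k\|^p\bigr)^{1/p}\le1$; your per-interval computation produces a bound $h^{-1/2}\|f_k\|_{L^2_x}$, and even granting perfect square-summation across intervals you would obtain $h^{-1/2}\bigl(\sum_k\|f_k\|^2\bigr)^{1/2}$, a quantity that is not controlled by the $\ell^p$-normalisation of the atom when $p>2$ (it can be as large as $K^{1/2-1/p}$ for a $K$-step atom). Sending $p\to2$ drives the embedding constant $V^2\hookrightarrow U^p$ to infinity, so this loss cannot be ``absorbed into the interpolation parameter.'' Second, the claimed $\ell^2$-almost-orthogonality of the $Q_h$-localised pieces is not ``harmless'': the cross term between intervals $I_k,I_j$ is of size $\int_{|\tau|\approx h}\widehat{\chi_{I_k}}\overline{\widehat{\chi_{I_j}}}\,d\tau\,\langle f_k,f_j\rangle$, and the oscillatory factor $e^{-i(t_k-t_j)\tau}$ gives cancellation only when $|t_k-t_j|\gg h^{-1}$; when many jumps cluster inside a window of length $h^{-1}$ (which is permitted — indeed typical — for general $V^2$ functions), the cross terms contribute $h^{-1}\bigl(\sum_{k\in\mathrm{cluster}}\|f_k\|\bigr)^2$, an $\ell^1$-type quantity that is not bounded by the $\ell^2$ sum.

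A correct route that stays at the same level of technicality replaces $v$ by its piecewise-constant average $v^\delta(t)=v(\delta\lfloor t/\delta\rfloor)$ on the regular grid of spacing $\delta\approx h^{-1}$. On one hand, on each grid interval $\|v(t)-v(j\delta)\|_{L^2_x}$ is bounded by the local $2$-variation, whence $\|v-v^\delta\|_{L^2_tL^2_x}\lesssim\delta^{1/2}|v|_{V^2}\approx h^{-1/2}\|v\|_{V^2}$, which handles $Q_{\ge h}(v-v^\delta)$ directly. On the other hand, $v^\delta$ has jumps only at the $h^{-1}$-separated grid points with $\sum_j\|\Delta_j^\delta\|_{L^2_x}^2\le|v|_{V^2}^2$, and writing $\widehat{v^\delta}(\tau)=(i\tau)^{-1}\sum_j e^{-ij\delta\tau}\Delta_j^\delta$, the integral $\int|\rho_{\ge h}(\tau)|^2|\tau|^{-2}e^{-im\delta\tau}d\tau$ is a rapidly decaying function of $m$ with $\ell^1_m$-norm $\lesssim h^{-1}$; a Schur test then yields $\|Q_{\ge h}v^\delta\|_{L^2_tL^2_x}\lesssim h^{-1/2}\|v\|_{V^2}$ with no appeal to $U^p$ atoms. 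The separation of the grid is exactly what makes the almost-orthogonality that your sketch assumes actually hold.
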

%Now we present an energy inequality. See also \cite[Proposition 2.10]{haheko}.
%\begin{lem}[Lemma 7.3. of \cite{candyherr}]\label{lem-v-dual}
%Let $F\in L^\infty_tL^2_x$, and suppose that
%$$
%\sup_{\|P_\lambda H_Nv\|_{V^2_\pm}\lesssim1}\left|\int_{\mathbb R} \langle P_\lambda H_Nv(t),F(t)\rangle_{L^2_x}\,dt \right| <\infty.
%$$	
%If $u\in C(\mathbb R,L^2_x)$ satisfies $-i\partial_tu\pm\langle\nabla\rangle u=F$, then $P_\lambda H_Nu\in V^2_\pm$ and we have the bound
%\begin{align}\label{v-dual}
%\|P_\lambda H_Nu\|_{V^2_\pm} \lesssim \|P_\lambda H_Nu(0)\|_{L^2_x} + \sup_{\|P_\lambda H_Nv\|_{V^2_\pm}\lesssim1}\left|\int_{\mathbb R} \langle P_\lambda H_Nv(t),F(t)\rangle_{L^2_x}\,dt \right|.
%\end{align}
%\end{lem}
We recall the uniform disposability of the modulation cutoff multipliers, which reads for $1\le q, p \le \infty$,
\begin{align}\label{uni-dis1}
\|C^\theta_{\le h}P_\lambda R_\kappa u\|_{L^q_tL^p_x} + \|C^\theta_h P_{\lambda}R_\kappa u\|_{L^q_tL^p_x} \lesssim \|P_\lambda R_\kappa u\|_{L^q_tL^p_x},
\end{align}
if $\kappa\in\mathcal C_{\mathtt r},\ h \gtrsim {\mathtt r}^2\lambda,$ and ${\mathtt r}\gtrsim\lambda^{-1}$.
Since convolution with $L^1_t(\mathbb R)$ functions is bounded on the $V^2$ space, we also have for every $h\in2^{\mathbb Z}$,
\begin{align}\label{uni-dis}
\|C^\theta_{\le h}u\|_{V^2_\theta} \lesssim \|u\|_{V^2_\theta}.
\end{align}
\begin{lem}[Lemma 7.3 of \cite{candyherr}]\label{lem-v-dual}
Let $F\in L^\infty_tL^2_x$, and suppose that
$$
\sup_{\|P_\lambda H_Nv\|_{V^2_\theta}\lesssim1}\left|\int_{\mathbb R} \langle P_\lambda H_Nv(t),F(t)\rangle_{L^2_x}\,dt \right| <\infty.
$$	
If $u\in C(\mathbb R,L^2_x)$ satisfies $-(i\partial_t+\theta\Lambda) u=F$, then $P_\lambda H_Nu\in V^2_\theta$ and we have the bound
\begin{align}\label{v-dual}
\|P_\lambda H_Nu\|_{V^2_\theta} \lesssim \|P_\lambda H_Nu(0)\|_{L^2_x} + \sup_{\|P_\lambda H_Nv\|_{V^2_\theta}\lesssim1}\left|\int_{\mathbb R} \langle P_\lambda H_Nv(t),F(t)\rangle_{L^2_x}\,dt \right|.
\end{align}
\end{lem}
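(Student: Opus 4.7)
The plan is to reduce the estimate to a Duhamel decomposition combined with the $U^2$--$V^2$ duality theory (e.g., from \cite{haheko}). First, I would apply $P_\lambda H_N$ to both sides of the equation: since $\Lambda$ and $P_\lambda$ are radial Fourier multipliers, by Lemma \ref{sph-ortho} they commute with $H_N$, so $u_{\lambda,N} := P_\lambda H_N u$ again satisfies $-(i\partial_t+\theta\Lambda) u_{\lambda,N} = P_\lambda H_N F$. The Duhamel representation then reads
\[
u_{\lambda,N}(t) = e^{-\theta it\Lambda} u_{\lambda,N}(0) + i\int_{0}^{t} e^{-\theta i(t-s)\Lambda} P_\lambda H_N F(s)\,ds,
\]
and by the very definition of the $V^2_\theta$ norm, the homogeneous piece contributes exactly $\|u_{\lambda,N}(0)\|_{L^2_x}$, matching the first term on the right-hand side of \eqref{v-dual}.

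For the Duhamel integral $w := \mathcal I_\theta[P_\lambda H_N F](0;\cdot)$ I would pass to the twisted profile $W(t) := e^{\theta it\Lambda} w(t)$, a continuous $L^2_x$-valued curve with $W(0)=0$ whose distributional time derivative equals $i\,e^{\theta it\Lambda} P_\lambda H_N F$. The abstract $V^2 \hookrightarrow (U^2)^*$ duality then expresses its $V^2$ norm as
\[
\|w\|_{V^2_\theta} \;=\; \|W\|_{V^2} \;\approx\; \sup_{\|\varphi\|_{U^2_\theta} \le 1} \left|\int_{\mathbb R} \langle \varphi(t),\, P_\lambda H_N F(t)\rangle_{L^2_x}\,dt\right|,
\]
where the right-hand side arises from integrating by parts in time to transport the derivative onto the test function $\varphi$, the boundary terms vanishing thanks to $W(0)=0$ and the continuity/decay built into the $V^2$ framework. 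Because of the continuous embedding $U^2_\theta \hookrightarrow V^2_\theta$, every unit-norm $U^2_\theta$ test function is admissible (up to an absolute constant) in the larger class $\{v \colon \|P_\lambda H_N v\|_{V^2_\theta} \lesssim 1\}$, and enlarging the class of test functions only increases the supremum. Combining this with the homogeneous bound via the triangle inequality yields \eqref{v-dual}, and the assumed finiteness of the right-hand side forces $P_\lambda H_N u \in V^2_\theta$.

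The main obstacle is the $U^2$--$V^2$ duality identification itself, particularly the passage from $U^2$-normalised to $V^2$-normalised test functions; this relies on the atomic structure of $U^2$ and the bounded $2$-variation structure of $V^2$, but it is a standard result in the Hadac--Herr--Koch framework and would be imported as a black box from \cite{haheko}. A minor subsidiary step is to justify the integration by parts for general $F \in L^\infty_t L^2_x$, which is handled by approximation together with the uniform boundedness built into the hypothesis.
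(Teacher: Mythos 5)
The paper does not prove this lemma; it is imported as Lemma~7.3 of Candy--Herr \cite{candyherr}, where it is established precisely via the $U^2$--$V^2$ machinery of \cite{haheko}. Your overall skeleton --- commute $P_\lambda H_N$ past $\Lambda$ using Lemma~\ref{sph-ortho}, split via Duhamel, treat the free part by the definition of $V^2_\theta$, and bound the Duhamel part by an $L^2_{t,x}$-pairing against $V^2_\theta$-normalised test functions --- is the correct one.

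However, there is a genuine flaw in how you run the duality. You invoke $V^2 \cong (U^2)^*$ and assert
\[
\|W\|_{V^2} \;\approx\; \sup_{\|\varphi\|_{U^2}\le 1}\left|\int_{\mathbb R}\langle\varphi(t),\,\partial_t W(t)\rangle\,dt\right|,
\]
claiming the integration by parts is harmless ``thanks to $W(0)=0$ and the continuity/decay built into the $V^2$ framework.'' This is where it goes wrong. In the Hadac--Herr--Koch pairing $B(\varphi,W)$ for $\varphi\in U^2$ and $W\in V^2$, the derivative naturally falls on the $U^2$ element $\varphi$, i.e.\ $B(\varphi,W) = -\int\langle\varphi',W\rangle\,dt$ for absolutely continuous $\varphi$. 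To move the derivative onto $W$ you must integrate by parts, which produces boundary terms $\langle\varphi,W\rangle\big|_{t=\pm\infty}$, and these do \emph{not} vanish: $W(0)=0$ gives no information at $\pm\infty$, $V^2$ functions have no decay at infinity (only boundedness and one-sided limits), and a generic $\varphi\in U^2$ has a nonzero limit at $+\infty$. The claim about $V^2$ ``decay'' is false. Moreover, general $U^2$ elements need not be absolutely continuous, so even stating the integrated-by-parts identity requires a density argument in a class of test functions for which the sup computed may a priori be strictly smaller.

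The standard fix is to work with the $U^2$ norm of the Duhamel term directly, rather than squeezing $\|W\|_{V^2}$ through $(U^2)^*$. Since $W$ is absolutely continuous with $W(0)=0$ and $\partial_t W = i\,e^{\theta it\Lambda}P_\lambda H_N F \in L^1_{\rm loc}$, the characterisation in \cite{haheko} gives
\[
\|W\|_{U^2} \;\approx\; \sup_{\substack{v\in V^2,\ \|v\|_{V^2}\le 1}}\left|\int_{\mathbb R}\langle v(t),\,\partial_t W(t)\rangle\,dt\right|
\;=\;\sup_{\|v\|_{V^2_\theta}\lesssim 1}\left|\int_{\mathbb R}\langle P_\lambda H_N v(t),\,F(t)\rangle\,dt\right|,
\]
with the derivative already sitting on $W$ and no integration by parts needed; the condition $W(0)=0$ is exactly what this characterisation requires. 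Then $\|W\|_{V^2_\theta}\lesssim\|W\|_{U^2_\theta}$ from the continuous embedding $U^2\hookrightarrow V^2$, and you are done. This is a localised correction to your argument rather than a change of strategy, but as written the duality step is not valid.
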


\subsection{Auxiliary estimates}
We present several estimates which will play a key role in the proof of our main result. %We first give bilinear estimates in $\mathbb R^{1+2}$.
%We refer to the readers \cite{choozlee} for the proof of bilinear estimates in $\mathbb R^{1+2}$.
\begin{prop}[Lemma 3.5 of \cite{choozlee}]\label{stri-2d}
Let $(q,r)$ satisfy that $\frac1q+\frac1p=\frac12$. Then
$$
\|e^{\theta it\Lambda}P_\lambda f\|_{L^q_tL^p_x(\mathbb R^{1+2})} \lesssim \lambda^{\frac2q}\|P_\lambda f\|_{L^2_x(\mathbb R^2)}
$$	
for all $\lambda\ge1$.
\end{prop}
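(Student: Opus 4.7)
The plan is to deploy the standard abstract Strichartz machinery for a dispersive propagator localized to a dyadic spatial frequency $\lambda\geq 1$, in three stages: a pointwise dispersive bound via stationary phase, interpolation with $L^2$ unitarity, and a $TT^*$ argument combined with fractional integration in time.

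First, I would establish the frequency-localized dispersive estimate
\begin{align*}
\|e^{\theta it\Lambda}P_\lambda f\|_{L^\infty_x(\mathbb R^2)} \lesssim \lambda^2 (1+|t|)^{-1}\|f\|_{L^1_x(\mathbb R^2)}
\end{align*}
by analyzing the oscillatory kernel $K_t(x)=\int_{\mathbb R^2}e^{i(x\cdot\xi-\theta t\Lambda(\xi))}\rho(\xi/\lambda)\,d\xi$. For $|t|\lesssim 1$, the trivial volume bound on the annulus $|\xi|\approx\lambda$ gives $\|K_t\|_{L^\infty_x}\lesssim\lambda^2$. For $|t|\gtrsim 1$, I invoke the principle of stationary phase: a direct computation gives $\nabla^2\Lambda(\xi)=\Lambda^{-1}(I-\xi\otimes\xi/\Lambda^2)$ with eigenvalues $\Lambda^{-3}$ (radial) and $\Lambda^{-1}$ (tangential), so on the support $|\xi|\approx\lambda\geq 1$ one has $|\det\nabla^2\Lambda|\approx\lambda^{-4}$. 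Non-degeneracy of the phase then delivers $|K_t(x)|\lesssim|t|^{-1}\lambda^2$. Coupled with the trivial $L^2$-unitarity $\|e^{\theta it\Lambda}f\|_{L^2}=\|f\|_{L^2}$, Riesz--Thorin interpolation yields the pointwise-in-$t$ bound
\begin{align*}
\|e^{\theta it\Lambda}P_\lambda f\|_{L^p_x}\lesssim\lambda^{2(1-2/p)}(1+|t|)^{-(1-2/p)}\|P_\lambda f\|_{L^{p'}_x}
\end{align*}
for all $2\leq p\leq\infty$.

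Second, a $TT^*$ argument reduces $\|e^{\theta it\Lambda}P_\lambda f\|_{L^q_tL^p_x}^2$ to controlling the bilinear form whose kernel is exactly the operator above. Applying Hardy--Littlewood--Sobolev fractional integration in the time variable with decay exponent $1-2/p$, we recover an estimate of the full form $L^{q'}_tL^{p'}_x\to L^q_tL^p_x$ with constant $\lambda^{4/q}$ provided that the HLS scaling identity $1-2/p=2/q$ holds, which is precisely the admissibility relation $\tfrac{1}{q}+\tfrac{1}{p}=\tfrac{1}{2}$ in the hypothesis. Taking the square root yields the claimed bound with weight $\lambda^{2/q}$. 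The endpoint $(q,p)=(2,\infty)$ (if needed) would require the Keel--Tao bilinear interpolation rather than HLS.

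The main obstacle is keeping track of the correct $\lambda$-weight in the dispersive estimate: the factor $\lambda^2$ (rather than the $\lambda^{3/2}$ one would see for a pure 2d wave propagator) arises from the weaker radial curvature of the Klein--Gordon symbol, and it is precisely this factor that percolates through the $TT^*$ argument to appear as the derivative loss $\lambda^{2/q}$ in the final Strichartz bound. Since the statement is cited as \textit{Lemma 3.5 of \cite{choozlee}}, the proof above is essentially well-known and the present application reduces to invoking it.
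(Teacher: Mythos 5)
The paper does not prove Proposition~\ref{stri-2d}: it cites it as Lemma~3.5 of \cite{choozlee} and simply applies it (only for the pair $q=p=4$). Your proof is the standard Ginibre--Velo/Keel--Tao route for a dyadically localized dispersive propagator, and it is essentially correct; your Hessian computation $\nabla^2\Lambda=\Lambda^{-1}\bigl(I-\xi\otimes\xi/\Lambda^2\bigr)$ with eigenvalues $\Lambda^{-3}$, $\Lambda^{-1}$ and $|\det\nabla^2\Lambda|\approx\lambda^{-4}$ on the annulus $|\xi|\approx\lambda\ge 1$ is right, and it correctly produces the frequency-localized dispersive bound $\lambda^2(1+|t|)^{-1}$, which feeds into $TT^*$ plus Hardy--Littlewood--Sobolev under the admissibility relation $\tfrac1q+\tfrac1p=\tfrac12$ to give the weight $\lambda^{2/q}$.

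One small but substantive correction: your aside about the endpoint $(q,p)=(2,\infty)$ has it backwards. With the $|t|^{-1}$ decay rate (i.e.\ $\sigma=1$ in Keel--Tao notation), the pair $(2,\infty)$ is precisely the \emph{forbidden} endpoint of the Keel--Tao theorem in two dimensions, not an endpoint their bilinear interpolation rescues. So the stated lemma should implicitly exclude $q=2$ (equivalently $p=\infty$); this does not affect the paper, since it only invokes the estimate at the interior pair $q=p=4$.
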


\begin{prop}[Proposition 3.1, 3.2 of \cite{cholee}]
Let $P_{\lambda_j} \Pi_{\theta_j}\psi =: \psi_{\lambda_j}\in V^2_{\theta_j}$ for $\theta_j \in \{+, -\}$ and $j = 1,2$. Then we have
\begin{align}
	\|P_{\lambda_0}(\psi_{\lambda_1}^\dagger\beta\psi_{\lambda_2})\|_{L^2_tL^2_x(\mathbb R^{1+2})} \lesssim \lambda_1^\eta \lambda_2^{1-\eta}\|\psi_{\lambda_1}\|_{V^2_{\theta_1}}\|\psi_{\lambda_2}\|_{V^2_{\theta_2}}, \label{bi-2d-lh}
\end{align}
for any $0<\eta<1$.
In particular, for High$\times$High interaction, i.e., $\lambda_0\lesssim\lambda_1\approx\lambda_2$, we have the following estimates:
\begin{align}
	\|P_{\lambda_0}(\psi_{\lambda_1}^\dagger\beta\psi_{\lambda_2})\|_{L^2_tL^2_x(\mathbb R^{1+2})} & \lesssim \lambda_0^\frac12\left(\frac{\lambda_0}{\lambda_1}\right)^\frac12 \|\psi_{\lambda_1}\|_{V^2_{\theta_1}}\|\psi_{\lambda_2}\|_{V^2_{\theta_2}},\quad \theta_1=\theta_2,\label{bi-2d-hh} \\
	\|P_{\lambda_0}(\psi_{\lambda_1}^\dagger\beta\psi_{\lambda_2})\|_{L^2_tL^2_x(\mathbb R^{1+2})} & \lesssim \lambda_0^\frac12 \|\psi_{\lambda_1}\|_{V^2_{\theta_1}}\|\psi_{\lambda_2}\|_{V^2_{\theta_2}},\quad \theta_1\neq\theta_2.
\end{align}	
\end{prop}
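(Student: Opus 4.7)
The plan is to prove the bilinear Klein--Gordon $L^2_{t,x}$ estimates in three stages: first reduce from $V^2_\theta$-solutions to free waves, then derive the coarse bound \eqref{bi-2d-lh} by H\"older plus Strichartz, and finally upgrade to the sharper high-high bounds via an angular decomposition that exploits the geometry of the Klein--Gordon characteristic surface.

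For the \emph{reduction to free waves}, I would use the embedding $V^2_\theta \hookrightarrow U^p_\theta$ for every $p > 2$ together with the $U^p$-atomic structure, which reduces matters to the corresponding bounds for free solutions $\psi_{\lambda_j} = e^{-\theta_j it\Lambda}f_{\lambda_j}$. In \eqref{bi-2d-lh} any logarithmic loss incurred by this passage can be absorbed by shrinking $\eta$ (which is arbitrary in $(0,1)$); for the high-high estimates one instead works with genuine $U^2$-atoms so that no loss appears. For the proof of \eqref{bi-2d-lh} itself, apply H\"older to obtain
\begin{align*}
\|P_{\lambda_0}(\psi_{\lambda_1}^\dagger\beta\psi_{\lambda_2})\|_{L^2_tL^2_x} \lesssim \|\psi_{\lambda_1}\|_{L^{q_1}_tL^{p_1}_x}\|\psi_{\lambda_2}\|_{L^{q_2}_tL^{p_2}_x},
\end{align*}
with $\frac{1}{q_1}+\frac{1}{q_2}=\frac{1}{p_1}+\frac{1}{p_2}=\frac{1}{2}$. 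Choosing $q_1 \in (2,\infty)$ and defining $p_1$ by $\frac{1}{q_1}+\frac{1}{p_1}=\frac{1}{2}$ makes $(q_1,p_1)$ admissible; then $(q_2,p_2)$ automatically satisfies $\frac{1}{q_2}+\frac{1}{p_2}=\frac{1}{2}$, so both factors can be Strichartz-estimated via Proposition \ref{stri-2d}. Setting $\eta = 2/q_1 \in (0,1)$ yields the claimed $\lambda_1^\eta \lambda_2^{1-\eta}$.

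For the \emph{high-high bounds} \eqref{bi-2d-hh} and its opposite-sign analogue, assume $\lambda_0 \lesssim \lambda_1 \approx \lambda_2$. In space-time Fourier, the product is supported where $\xi = \xi_1 + \xi_2$ and $\tau = \theta_1\Lambda(\xi_1) - \theta_2\Lambda(\xi_2)$ with $|\xi_j| \sim \lambda_1$ and $|\xi| \sim \lambda_0$, which forces $\xi_1 \approx -\xi_2$. I would decompose each factor into angular caps $R_\kappa\psi_{\lambda_j}$ of radius $\mathtt r \sim (\lambda_0/\lambda_1)^{1/2}$; almost-orthogonality over caps reduces the estimate to a single pair of antipodal caps. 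For such a pair, Plancherel plus Cauchy--Schwarz reduces the bound to controlling the measure of the level set $\{\xi_1 : \theta_1\Lambda(\xi_1) - \theta_2\Lambda(\xi - \xi_1) = \tau\}$ at fixed $(\tau, \xi)$. When $\theta_1 = \theta_2$, the gradient $\nabla_{\xi_1}[\theta_1\Lambda(\xi_1) + \theta_2\Lambda(\xi - \xi_1)]$ at antipodal configurations degenerates to size $\sim \lambda_0/\lambda_1$ in the transverse direction, producing the gain $(\lambda_0/\lambda_1)^{1/2}$ on top of the volume factor $\lambda_0^{1/2}$. When $\theta_1 \ne \theta_2$, the corresponding gradient does not degenerate and only the volume factor $\lambda_0^{1/2}$ survives.

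The \emph{hard part} is the transversality/Jacobian analysis in the high-high case: one has to choose the cap radius $(\lambda_0/\lambda_1)^{1/2}$ so that it simultaneously captures the full bilinear interaction, is compatible with the modulation cutoffs implicit in the $V^2$-norm, and supplies a finitely-overlapping sum after Plancherel. The low-frequency regime $\lambda_1 \lesssim 1$ must be treated separately using Strichartz directly, since there the Klein--Gordon symbol is elliptic and the sphere-like geometry of the characteristic set degenerates; the contribution of the $\beta$-matrix itself is a bounded linear algebra operation that does not affect the scaling.
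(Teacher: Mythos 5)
Your proof of \eqref{bi-2d-lh} by H\"older with Strichartz--admissible pairs $(q_1,p_1)$, $(q_2,p_2)$ satisfying $\frac1{q_1}+\frac1{q_2}=\frac1{p_1}+\frac1{p_2}=\frac12$, followed by Proposition~\ref{stri-2d} on each factor after passing to $U^q$-atoms, is correct: setting $\eta=2/q_1$ with $q_1\in(2,\infty)$ produces exactly the claimed $\lambda_1^\eta\lambda_2^{1-\eta}$.

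The high--high estimates \eqref{bi-2d-hh}, however, are not recoverable by the mechanism you describe, and your explanation of the gain is inverted. For a resonance function $m(\xi_1)=\theta_1\Lambda(\xi_1)-\theta_2\Lambda(\xi-\xi_1)$ one has $\nabla_{\xi_1}m=\theta_1\widehat{\xi_1}+\theta_2\widehat{\xi_2}$, and in the high--high-to-low regime with $\theta_1=\theta_2$ the two unit vectors nearly cancel, so $|\nabla m|\sim\lambda_0/\lambda_1$ is indeed small. But a small Jacobian makes the level sets in the coarea/Cauchy--Schwarz argument \emph{larger}, i.e.\ it produces a \emph{loss} of order $(\lambda_0/\lambda_1)^{-1/2}$, not a gain. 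If one ignores the spinor structure, the same-sign interaction is therefore the \emph{worst} configuration, yet the estimate you must prove is \emph{better} by a factor $(\lambda_0/\lambda_1)^{1/2}$ precisely in that case. The extra gain comes entirely from the Dirac null form: since $\psi_{\lambda_j}=P_{\lambda_j}\Pi_{\theta_j}\psi_j$, the bilinear symbol is $\Pi_{\theta_1}(\xi_1)^\dagger\beta\,\Pi_{\theta_2}(\xi_2)$; writing $\beta\Pi_{\theta_2}(\xi_2)=\Pi_{-\theta_2}(\xi_2)\beta+O(\lambda_1^{-1})$ and using $\Pi_{\theta}(\xi)\Pi_{-\theta}(\xi)=0$, this symbol is $O(\angle(\xi_1,\xi_2)+\lambda_1^{-1})$ when $\theta_1=\theta_2$. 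After decomposing into caps of angular radius $\mathtt r\sim(\lambda_0/\lambda_1)^{1/2}$, each interacting pair of caps contributes an extra factor of $\mathtt r$, which more than compensates for the degenerate Jacobian and yields the stated bound. Your statement that ``the contribution of the $\beta$-matrix itself is a bounded linear algebra operation that does not affect the scaling'' therefore throws away the single ingredient on which \eqref{bi-2d-hh} depends; with the null structure discarded, your argument cannot close at the claimed exponent, and in the opposite-sign case it is the large output modulation (not a favorable Jacobian) that gives the complementary $\lambda_0^{1/2}$ bound.
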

%\begin{proof}
%The first estimate is derived easily. In fact, by Strichartz estimates, we have
%$$
%\|e^{-\theta it\Lambda}P_\lambda f\|_{L^q_tL^r_x(\mathbb R^{1+2})} \lesssim \lambda^{\frac2q}\|P_\lambda f\|_{L^2_x}
%$$	
%for any $\theta \in \{+, -\}$, provided that $\frac1q+\frac1r=\frac12$, with $2<q<\infty$. Then we see that
%$$
%\|P_\lambda \psi\|_{L^q_tL^r_x} \lesssim \lambda^\frac2q\|P_\lambda\psi\|_{U^q_\theta}.
%$$
%We observe that if $\psi\in V^2_\theta$, then there exists a decomposition $\psi = \sum_{n \in\mathbb N}\psi_n$ such that for every $q > 2$, we have $\|\psi_n\|_{U^q_\theta} \lesssim 2^{n(\frac2q-1)}\|\psi\|_{V^2_\theta}$. Thus
%\begin{align*}
%\|P_\lambda\psi\|_{L^q_tL^r_x} &\lesssim \sum_{n\in\mathbb N}\|P_\lambda\psi_n\|_{L^q_tL^r_x} \\
%& \lesssim \sum_{n\in\mathbb N}\lambda^\frac2q\|P_\lambda\psi_n\|_{U^q_\theta} \\
%& \lesssim \lambda^\frac2q\|P_\lambda\psi\|_{V^2_\theta} \sum_{n\in\mathbb N}2^{n(\frac2q-1)} \lesssim \lambda^\frac2q \|P_\lambda\psi\|_{V^2_\theta}.	
%\end{align*}
%Hence we see that
%\begin{align*}
%\|P_{\lambda_0}(\psi^\dagger_{\lambda_1}\beta\psi_{\lambda_2})\|_{L^2_tL^2_x} & \lesssim \|\psi_{\lambda_1}\|_{L^q_tL^r_x}\|\psi_{\lambda_2}\|_{L^r_tL^q_x} \\
%& \lesssim \lambda_1^\eta\lambda_2^{1-\eta}\|\psi_{\lambda_1}\|_{V^2_{\theta_1}}\|\psi_{\lambda_2}\|_{V^2_{\theta_2}},	
%\end{align*}
%where we put $\eta=\frac2q\in(0,1)$.
%\end{proof}

%\subsection{Auxiliary estimates in $\mathbb R^{1+3}$}
\begin{prop}[Lemma 3.1 of \cite{behe}]
Let $2<q\le\infty$. If $0<\mu\le\lambda$, and $\frac1q+\frac1p=\frac12$, then for every $\mathsf q\in\mathcal Q_\mu$ we have
$$
\|e^{-\theta it\Lambda}P_{\mathsf q}P_\lambda f\|_{L^q_tL^p_x(\mathbb R^{1+3})} \lesssim (\mu\lambda)^\frac1q \|P_{\mathsf q}P_\lambda f\|_{L^2_x(\mathbb R^3)}.
$$	
\end{prop}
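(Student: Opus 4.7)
This is a tube-localised Strichartz estimate for the half Klein–Gordon propagator $e^{-i\theta t\Lambda}$: the gain of the factor $(\mu/\lambda)^{1/q}$ over the standard Strichartz bound for $P_\lambda$ reflects that $\mathsf q$ occupies only a fraction $\sim(\mu/\lambda)^2$ of the annulus $|\xi|\sim\lambda$. My plan is the classical Keel–Tao / Bejenaru–Herr scheme: derive a dispersive decay for the tube-localised kernel exploiting the curvature of $\Lambda$ transverse to $\xi_0$, interpolate with energy conservation, and close via $TT^*$ and Hardy–Littlewood–Sobolev in time.

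\textbf{Dispersive estimate on the tube.} Assuming without loss of generality $f = P_{\mathsf q}P_\lambda f$, the convolution kernel of the propagator is $K(t,x) = \int e^{i(x\cdot\xi - \theta t\Lambda(\xi))}\rho_{\mathsf q}(\xi)\rho_\lambda(\xi)\,d\xi$. Writing $\xi_0$ for the centre of $\mathsf q$ with $|\xi_0|\sim\lambda$ and rescaling $\xi = \xi_0+\mu\zeta$, the phase Hessian in $\zeta$ is $-\theta t\mu^2 \nabla^2\Lambda(\xi_0)$. Since $\nabla^2\Lambda(\xi) = \Lambda^{-1}\mathbb I - \Lambda^{-3}\xi\otimes\xi$, this Hessian has eigenvalues $\sim \lambda^{-1}$ in the two directions transverse to $\xi_0$ and $\sim \lambda^{-3}$ along $\xi_0$. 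Two-dimensional stationary phase in the transverse directions, combined with the trivial volume bound in the degenerate radial direction, yields $\|K(t,\cdot)\|_{L^\infty_x}\lesssim \mu^3(1+\mu^2|t|/\lambda)^{-1}$, which degenerates to $\mu^3$ at short times and to $\mu\lambda|t|^{-1}$ at long times.

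\textbf{Completion via $TT^*$.} Riesz–Thorin interpolation between the above $L^1\to L^\infty$ bound and the isometry $e^{-i\theta t\Lambda}\colon L^2_x\to L^2_x$ produces, for $2\le p\le \infty$ and $q$ satisfying $\frac1q+\frac1p=\frac12$, the pointwise-in-time estimate $\|e^{-i\theta t\Lambda}P_{\mathsf q}P_\lambda f\|_{L^p_x}\lesssim \mu^{6/q}(1+\mu^2|t|/\lambda)^{-2/q}\|P_{\mathsf q}P_\lambda f\|_{L^{p'}_x}$. Setting $Tf(t,\cdot) = e^{-i\theta t\Lambda}P_{\mathsf q}P_\lambda f$, a direct computation shows that $(1+\mu^2|t|/\lambda)^{-2/q}\in L^{q/2,\infty}_t$ with norm $\sim(\lambda/\mu^2)^{2/q}$; applying the Young/Hardy–Littlewood–Sobolev inequality to $TT^*G(t) = \int e^{-i\theta(t-s)\Lambda}P_{\mathsf q}P_\lambda G(s)\,ds$ yields $\|TT^*G\|_{L^q_tL^p_x}\lesssim (\mu\lambda)^{2/q}\|G\|_{L^{q'}_tL^{p'}_x}$, and the claim follows from $\|T\|^2 = \|TT^*\|$. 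The principal structural obstacle is the radial degeneracy of $\nabla^2\Lambda$, which caps the dispersive decay at $|t|^{-1}$ and therefore forces the HLS exponent $2/q$ to lie in $(0,1)$; this is precisely what excludes $q=2$ from the hypothesis and makes the result sharp. The other endpoint $q=\infty$ requires no argument beyond energy conservation, consistent with the stated range $2<q\le\infty$.
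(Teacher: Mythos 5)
Your proof is correct and follows essentially the same route as the cited source \cite{behe}: a tube-localised dispersive bound $\|K(t,\cdot)\|_{L^\infty}\lesssim\mu^3(1+\mu^2|t|/\lambda)^{-1}$ obtained from the eigenvalue structure $\Lambda^{-1}\mathbb I-\Lambda^{-3}\xi\otimes\xi$ of $\nabla^2\Lambda$ (two transverse eigenvalues $\sim\lambda^{-1}$ and one radial $\sim\lambda^{-3}$), followed by interpolation with $L^2$ conservation and a $TT^*$ argument via Hardy--Littlewood--Sobolev. All the exponent bookkeeping checks out ($1-2/p=2/q$, $\mu^{6/q}(\lambda/\mu^2)^{2/q}=(\mu\lambda)^{2/q}$, and the HLS constraint $0<2/q<1$ correctly accounting for the exclusion of $q=2$), so there is nothing to add.
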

%\begin{prop}
%For $\frac{10}{3}\le r\le 4$ we have
%$$
%\|e^{-\theta it\Lambda}P_\lambda f\|_{L^r_{t,x}} \lesssim \lambda^\frac12\|P_\lambda f\|_{L^2_x}.
%$$	
%\end{prop}

%\begin{lem}
%Let $1\le\mu\lesssim\lambda,\ {\mathtt r}\gtrsim\lambda^{-1},\ \epsilon>0$. For $q,r$ satisfying the condition as in Lemma \ref{wave-stri} and for any $\theta\in \{+, -\}$ we have
%\begin{align}
%\left( \sum_{q\in\mathcal Q_\mu}\sum_{\kappa\in\mathcal C_{\mathtt r}}\|R_\kappa P_q u_{\lambda,N}\|^2_{L^q_tL^r_x} \right)^\frac12 & \lesssim {\mathtt r}^{-\epsilon}\left(\frac\mu\lambda\right)^{-\epsilon}(\mu\lambda)^{\frac12-\frac1r}\|u_{\lambda,N}\|_{V^2_\theta},\label{cube-lin} \\
%\left(\sum_{\kappa\in\mathcal C_{\mathtt r}}\|R_\kappa u_{\lambda,N}\|_{L^q_tL^r_x}^2 \right)^\frac12 & \lesssim {\mathtt r}^{-\epsilon}\lambda^{3(\frac12-\frac1r)-\frac1q}N^{\frac12+\epsilon}\|u_{\lambda,N}\|_{V^2_\theta}. \label{ang-lin}
%\end{align}
%Here we write $P_\lambda H_N u=u_{\lambda,N}$.
%\end{lem}
\begin{lem}[Lemma 8.5 of \cite{candyherr}]\label{ang-con}
Let $2\le p<\infty$, and $0\le s<\frac2p$. If $\lambda,N\ge1$, ${\mathtt r}\gtrsim\lambda^{-1}$, and $\kappa\in\mathcal C_{\mathtt r}$, then we have
$$
\|R_\kappa P_\lambda H_N f\|_{L^p_x(\mathbb R^3)} \lesssim ({\mathtt r} N)^s \|P_\lambda H_N f\|_{L^p_x(\mathbb R^3)}.
$$	
\end{lem}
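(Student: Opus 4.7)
The plan is to reduce to a Bernstein-type bound on the spherical harmonic components and then combine with a H\"older argument. Since $R_\kappa$, $P_\lambda$, and $H_N$ pairwise commute, I would set $g := P_\lambda H_N f$ and prove
\[
\|R_\kappa g\|_{L^p(\mathbb R^3)} \lesssim (\mathtt r N)^s \|g\|_{L^p(\mathbb R^3)}, \qquad 0 \le s < \tfrac{2}{p},\ 2 \le p < \infty.
\]
At $p = 2$ this is immediate from the $L^2$-boundedness of the smooth Fourier multiplier $R_\kappa$, giving $s = 0$. For $p > 2$ I would argue in two steps: first, that $R_\kappa g$ is essentially concentrated, in physical angle, on the cap $\kappa$ (plus a rapidly decaying tail); and second, that H\"older combined with angular Bernstein produces the factor $(\mathtt r N)^s$.

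For the essential angular support, I would rely on the Funk--Hecke observation that the 3D Fourier transform preserves the spherical harmonic decomposition $g(r\omega) = \sum_{\ell \sim N, n} a_{\ell,n}(r) Y_{\ell,n}(\omega)$: each angular mode is mapped into itself, up to a Hankel transform on the radial part. Consequently, $R_\kappa$ acts in angle (on the range of $H_N$) by projection onto spherical harmonic expansions whose Fourier-angle mass lies in $\kappa$. Using $\mathtt r \gtrsim \lambda^{-1} \ge N^{-1}$ and the rapid decay of the matrix coefficients $\langle \rho_\kappa Y_{\ell,n}, Y_{\ell',n'}\rangle_{L^2(\mathbb S^2)}$ as $Y_{\ell',n'}$ moves away from $\kappa$, one can replace $R_\kappa g$ by $\mathbf 1_{\tilde\kappa} R_\kappa g$ in angle, with $\tilde\kappa$ a slight enlargement of $\kappa$ of area $\sim \mathtt r^2$, at the cost of admissible errors.

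Freezing a radius $r$ and writing $g_r(\omega) = g(r\omega)$, H\"older on $\mathbb S^2$ gives $\|\mathbf 1_{\tilde\kappa} R_\kappa g_r\|_{L^p_\omega} \le |\tilde\kappa|^{1/p - 1/q}\, \|R_\kappa g_r\|_{L^q_\omega} \lesssim \mathtt r^{2(1/p - 1/q)} \|g_r\|_{L^q_\omega}$ for any $q \ge p$, where the last step uses the uniform $L^q$-boundedness of the Calder\'on--Zygmund multiplier $R_\kappa$. Combining with the angular Bernstein inequality $\|h\|_{L^q(\mathbb S^2)} \lesssim N^{2(1/p - 1/q)}\|h\|_{L^p(\mathbb S^2)}$ valid for any $h$ with spherical harmonic degree $\sim N$, one obtains $\|R_\kappa g_r\|_{L^p_\omega} \lesssim (\mathtt r N)^{2(1/p - 1/q)}\|g_r\|_{L^p_\omega}$. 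Raising to the $p$-th power, integrating in $r^2\,dr$, and setting $s = 2(1/p - 1/q)$ yields the claimed bound for any $0 \le s < 2/p$ as $q$ ranges over $[p,\infty)$.

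The main obstacle I anticipate is the essential-support step: converting the Funk--Hecke duality into a clean $L^p$-statement with rapidly decaying tails requires either a careful kernel bound via Bessel-function asymptotics or a direct orthogonality computation on $\mathbb S^2$ exploiting that $\rho_\kappa$ is smooth and supported on a cap of radius $\mathtt r \gtrsim N^{-1}$. The remaining ingredients---H\"older, angular Bernstein, and uniform $L^p$-boundedness of $R_\kappa$---are standard, and the endpoint restriction $s < 2/p$ reflects exactly the failure of Bernstein at $q = \infty$.
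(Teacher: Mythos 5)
The paper itself does not prove this lemma: it is quoted verbatim as Lemma 8.5 of \cite{candyherr}, so there is no internal proof to compare against. Your proposal therefore has to stand on its own, and as written it has a substantive gap that goes beyond the one you flag.

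The central problem is that you treat $R_\kappa$ as an operator that acts radius-by-radius on each sphere $r\mathbb S^2$: you write $\|\mathbf 1_{\tilde\kappa}R_\kappa g_r\|_{L^p_\omega}\le|\tilde\kappa|^{1/p-1/q}\|R_\kappa g_r\|_{L^q_\omega}\lesssim \mathtt r^{2(1/p-1/q)}\|g_r\|_{L^q_\omega}$ at a frozen radius $r$, and then integrate in $r^2\,dr$. But $R_\kappa$ is an angular Fourier multiplier in $\xi$, not in $x$, and it does \emph{not} commute with restriction to physical spheres. The Funk--Hecke structure you invoke makes this visible: writing $g=\sum_{\ell,n}a_{\ell,n}(r)Y_{\ell,n}(\omega)$, one has $\widehat g(\rho\theta)=\sum_{\ell,n}i^{-\ell}(\mathcal H_\ell a_{\ell,n})(\rho)Y_{\ell,n}(\theta)$ with $\mathcal H_\ell$ the order-$\ell$ Hankel transform; multiplying by $\rho_\kappa(\theta)$ redistributes angular degree $\ell\mapsto\ell'$, and after inverting the Fourier transform the $(\ell',n')$ physical-angle coefficient is $\mathcal H_{\ell'}^{-1}\bigl[\sum_{\ell,n}c^\kappa_{\ell,n;\ell',n'}\,\mathcal H_\ell a_{\ell,n}\bigr]$. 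For $\ell\neq\ell'$ the radial operator $\mathcal H_{\ell'}^{-1}\circ\mathcal H_\ell$ is not the identity, so $R_\kappa g$ at radius $r$ genuinely depends on $g$ at all radii. Consequently the Hölder step and the ``uniform $L^q(\mathbb S^2)$-boundedness of $R_\kappa$ at each $r$'' step are not available in the form you use them, and the angular Bernstein inequality is then applied to an object that isn't $g_r$.

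The ``essential support'' claim that you single out as the main obstacle compounds this. You assert $R_\kappa g\approx\mathbf 1_{\tilde\kappa}R_\kappa g$ in physical angle, justified by ``rapid decay of $\langle\rho_\kappa Y_{\ell,n},Y_{\ell',n'}\rangle$ as $Y_{\ell',n'}$ moves away from $\kappa$.'' That decay is in angular \emph{frequency} $|\ell-\ell'|\gg\mathtt r^{-1}$, not in angular \emph{position}; spherical harmonics are global, so the phrase does not translate into physical angular localization. Any physical angular-concentration statement for $R_\kappa$ must first pass through the radial localization $P_\lambda$: it is only after restricting radial frequency to $\sim\lambda$ that the order-$\ell$ and order-$\ell'$ Hankel transforms become comparable for $|\ell-\ell'|\lesssim\mathtt r^{-1}\lesssim\lambda$, which is exactly where the hypothesis $\mathtt r\gtrsim\lambda^{-1}$ enters and where Bessel-function asymptotics are unavoidable. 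As written, your argument never uses $P_\lambda$ at all, which is a clear sign something is missing. The Funk--Hecke observation and the cap-area/angular-Bernstein bookkeeping are indeed the right family of ideas, but to close the argument you need a quantitative statement that on the range of $P_\lambda H_N$ the operator $R_\kappa$ is, up to negligible error, a slice-by-slice angular operator with $O(1)$ kernel on $\mathbb S^2$; that is the actual theorem hiding inside your ``main obstacle,'' and your subsequent Hölder chain silently assumes it.
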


\section{Multilinear estimates I: Three spatial dimensions}\label{sec-multi-3d}
In this section we shall prove Proposition \ref{main-tri-est} in $\mathbb R^{1+3}$.
%\subsection{Trilinear estimates in $s>0$}
%We shall prove the following estimates:
%\begin{align}
%\begin{aligned}
%	\|\mathcal I_{t_0}[V_b*(\varphi^\dagger\beta\phi)\beta\psi]\|_{F^{s,\sigma}(I)} & \le C\|\varphi\|_{\mathbf D^{-\frac12, \sigma}(I)}^\eta \|\varphi\|_{F^{0,\sigma}(I)}^{1-\eta}\|\phi\|_{F^{s,0}(I)}\|\psi\|_{F^{s,0}(I)} \\
%	& \quad + C\|\varphi\|_{F^{s,0}(I)}\|\phi\|_{\mathbf D^{-\frac12, \sigma}(I)}^\eta\|\phi\|_{F^{0,\sigma}(I)}^{1-\eta}\|\psi\|_{F^{s,0}(I)} \\
%	& \quad + C\|\varphi\|_{F^{s,0}(I)}\|\phi\|_{F^{s,0}(I)}\|\psi\|_{\mathbf D^{-\frac12}_{\sigma}(I)}^\eta \|\psi\|_{F^{0,\sigma}(I)}^{1-\eta}.
%	\end{aligned}
%\end{align}

\subsection{Trilinear estimates for $s=0,\,\sigma>0$}\label{multi-3d}
In this section we prove \eqref{0-tri}. By trivial extension we may assume that $I = \mathbb R$. Let $\Theta = (\theta, \lambda, N)$ and $$\psi_{\Theta} = P_{\lambda}H_{N} \psi_{\theta}.$$
%\begin{align}
%\begin{aligned}
%	\|\mathcal I_\theta[V_b*(\varphi^\dagger\beta\phi)\beta\psi](t_0; \cdot)\|_{F^{0,\sigma}(I)} & \le C (\|\varphi\|_{\mathbf D^{-\frac12, \sigma}(I)}\|\phi\|_{\mathbf D^{-\frac12, \sigma}(I)}\|\psi\|_{\mathbf D^{-\frac12, \sigma}(I)})^\eta \\
%	&\qquad\qquad \times (\|\varphi\|_{F^{0,\sigma}(I)}\|\phi\|_{F^{0,\sigma}(I)}\|\psi\|_{F^{0,\sigma}(I)})^{1-\eta}.
%	\end{aligned}
%\end{align}	
Let $\Theta_j = (\theta_j, \lambda_j, N_j)$ for $j \in \{1, 2, 3, 4\}$.
%Then \eqref{0-tri} follows from the $L^2$-bilinear estimates:
%\begin{align}\label{bi1}
%\begin{aligned}
%\|P_{\lambda_0}H_{N_0}(\varphi^\dagger_{\Theta_1}\beta\phi_{\Theta_2})\|_{L^2_tL^2_x} & \lesssim (\lambda_1^{-\frac12}\|\varphi_{\Theta_1}\|_{L^4_tL^4_x}\lambda_2^{-\frac12}\|\phi_{\Theta_2}\|_{L^4_tL^4_x}	)^{\eta} \\
%& \times \lambda_0\left(\frac{\lambda_{\min}}{\lambda_{\max}}\right)^\delta (N_{\min})^\sigma \left(\|\varphi_{\Theta_1}\|_{V^2_{\theta_1}}\|\phi_{\Theta_2}\|_{V^2_{\theta_2}}\right)^{1-\eta}.
%\end{aligned}
%\end{align}
%and
%\begin{align}\label{bi2}
%\begin{aligned}
%	\|P_{\lambda_0}H_{N_0}(\varphi^\dagger_{\Theta_1}\beta\phi_{\Theta_2})\|_{L^2_tL^2_x} & \lesssim (\lambda_1^{-\frac12}\|\varphi_{\Theta_1}\|_{L^4_tL^4_x})^\eta \\
%	&\times  \lambda_0\left(\frac{\lambda_{\min}}{\lambda_{\max}}\right)^\delta (N_{\min})^\sigma \|\varphi_{\Theta_1}\|_{V^2_{\theta_1}}^{1-\eta}\|\phi_{\Theta_2}\|_{V^2_{\theta_2}}.
%\end{aligned}	
%\end{align}
By definition of $F^{0, \sigma}$ and $V_\theta^2$ and Lemma \ref{lem-v-dual} we write
\begin{align*}
&\|\mathcal I_{\theta}[V_b*(\varphi^\dagger\beta\phi)\beta\psi](t_0; \cdot)\|_{F^{0, \sigma}}^2\\
&\qquad = \sum_{\lambda_4,N_4\in2^\mathbb N}N_4^{2\sigma}\sum_{\theta' \in \{+, -\}}\|\Pi_{\theta'}	P_{\lambda_4} H_{N_4}  \mathcal I_{\theta}[V_b*(\varphi^\dagger\beta\phi)\beta\psi](t_0; \cdot)\|_{V^2_{\theta'}}^2 \\
&\qquad = \sum_{\lambda_4,N_4\in2^\mathbb N}N_4^{2\sigma}\|\Pi_{\theta}	P_{\lambda_4} H_{N_4} \mathcal I_\theta[V_b*(\varphi^\dagger\beta\phi)\beta\psi](t_0; \cdot)\|_{V^2_\theta}^2 \\
&\qquad = \sum_{\lambda_4,N_4\ge1}N_4^{2\sigma}\sup_{\|\uppsi_{\Theta_4}\|_{V^2_\theta\le1}}\left|\int_{\mathbb R^{1+3}} [V_b*(\varphi^\dagger\beta\phi)] (\uppsi_{\Theta_4}^\dagger\beta\psi )\,dtdx \right|^2 \\
&\qquad \le \sum_{\lambda_4,N_4\ge1}N_4^{2\sigma}\left(\sum_{\Theta_j, j=1,2,3}\sup_{\|\uppsi_{\Theta_4}\|_{V^2_{\theta_4}}\lesssim1}\left|\int_{\mathbb R^{1+3}} \Lambda^{-2}(\varphi^\dagger_{\Theta_1}\beta\phi_{\Theta_2})(\uppsi_{\Theta_4}^\dagger \beta\psi_{\Theta_3})\,dtdx  \right| \right)^2 \\
&\qquad=: \mathfrak I_{\lambda,N}.
\end{align*}
We shall decompose $\mathfrak I_{\lambda,N}$ into the modulation $h$, which is the distance to the light cone in the space-time Fourier variables. Then we consider the high frequency and the low frequency cases, compared to the size of the modulation. That is, we are concerned with the following frequency interactions for $\{j,k\}=\{1,2\}$ or $\{3,4\}$,
$$
\min\{\lambda_0,\lambda_j,\lambda_k\} \lesssim \textrm{med}\{\lambda_0,\lambda_j,\lambda_k\} \approx\max\{\lambda_0,\lambda_j,\lambda_k\},
$$
together with
$$
h \gtrsim \lambda_{\max},\quad h\ll \lambda_{\max},
$$
where $\lambda_{\max}=\max\{\lambda_1,\lambda_2,\lambda_3,\lambda_4\}$. We shall write
\begin{align*}
\mathfrak I_{\lambda,N}& = \textrm{(Low frequency)}+\textrm{(High frequency)}  \\
&=: \mathfrak I_{h\gtrsim\lambda}+\mathfrak I_{h\ll\lambda}.
\end{align*}

To estimate the low frequency part $\mathfrak I_{h\gtrsim\lambda}$,
 we write
\begin{align*}
\mathcal I_{\lambda,N} &:= \left|\int_{\mathbb R^{1+3}}  \Lambda^{-2}\Big( \left[ C_{\ge h}^{\theta_1}\varphi_{\Theta_1}\right]^\dagger\beta\phi_{\Theta_2}\Big)(\uppsi_{\Theta_4}^\dagger\beta\psi_{\Theta_3})\,dtdx\right|.	
\end{align*}
Then for small $0<\eta\ll1$, we have
\begin{align*}
&\sum_{h \gtrsim\lambda_{\max}}\mathcal I_{\lambda,N} \\
&= \sum_{h \gtrsim\lambda_{\max}}(\mathcal I_{\lambda,N})^\eta	(\mathcal I_{\lambda,N})^{1-\eta} \\
& \lesssim \left(\frac{\lambda_0}{\lambda_{\max}}\right)^{\frac12-3\eta}\left(\lambda_1^{-\frac12}\lambda_2^{-\frac12}\lambda_3^{-\frac12}\|\varphi_{\Theta_1}\|_{L^4_{t,x}}\|\phi_{\Theta_2}\|_{L^4_{t,x}}\|\psi_{\Theta_3}\|_{L^4_{t,x}}\right)^\eta \\
&\qquad\qquad \times\|\varphi_{\Theta_1}\|_{V^2_{\theta_1}}\|\phi_{\Theta_2}\|_{V^2_{\theta_2}}\|\psi_{\Theta_3}\|_{V^2_{\theta_3}}\|\uppsi_{\Theta_4}\|_{V^2_{\theta_4}}.
\end{align*}
Here the estimate
 $$
 (\mathcal I_{\lambda,N})^\eta\lesssim \left(\frac{\lambda_0}{\lambda_{\max}} \right)^{-2\eta}\left(\lambda_1^{-\frac12}\lambda_2^{-\frac12}\lambda_3^{-\frac12}\lambda_4^{-\frac12}\|\varphi_{\Theta_1}\|_{L^4_{t,x}}\|\phi_{\Theta_2}\|_{L^4_{t,x}}\|\psi_{\Theta_3}\|_{L^4_{t,x}}\|\uppsi_{\lambda_4,N_4}\|_{L^4_{t,x}}\right)^\eta
 $$
 follows from H\"older's inequality and $L^4$-Strichartz estimates and the estimate of $(\mathcal I_{\lambda,N})^{1-\eta}$ follows from \cite{chohlee}. This completes the estimate of $\mathfrak I_{h\gtrsim\lambda}$.

 On the other hand, the estimate of $\mathfrak I_{h\ll\lambda}$ requires more work. In this case the required estimates \eqref{0-tri} follows from the following $L^2$-bilinear estimates for $\frac1{10}<\delta<\frac12$ and $\eta<\frac\sigma{10}$,
 \begin{align}\label{bi1}
\begin{aligned}
\|P_{\lambda_0}H_{N_0}(\varphi^\dagger_{\Theta_1}\beta\phi_{\Theta_2})\|_{L^2_tL^2_x} & \lesssim (\lambda_1^{-\frac12}\|\varphi_{\Theta_1}\|_{L^4_tL^4_x}\lambda_2^{-\frac12}\|\phi_{\Theta_2}\|_{L^4_tL^4_x}	)^{\eta} \\
& \quad\qquad\times \lambda_0\left(\frac{\lambda_{\min}}{\lambda_{\max}}\right)^\delta (N_{\min})^\sigma \left(\|\varphi_{\Theta_1}\|_{V^2_{\theta_1}}\|\phi_{\Theta_2}\|_{V^2_{\theta_2}}\right)^{1-\eta}.
\end{aligned}
\end{align}
%Here $\theta_4 = \theta$ and it is fixed. Then by \eqref{bi1} we estimate
Indeed, by \eqref{bi1} we have
\begin{align*}
&\|\mathcal I_{\theta}[V_b*(\varphi^\dagger\beta\phi)\beta\psi](t_0; \cdot)\|_{F^{0, \sigma}}^2\\
&\qquad \lesssim \sum_{\lambda_4,N_4}N_4^{2\sigma}\bigg( \sum_{\Theta_j, j=1,2,3}(\lambda_1^{-\frac12}\lambda_2^{-\frac12}\lambda_3^{-\frac12} \|\varphi_{\Theta_1}\|_{L^4_{t,x}}\|\phi_{\Theta_2}\|_{L^4_{t,x}}\|\psi_{\Theta_3}\|_{L^4_{t,x}})^\eta  \\
& \qquad\qquad 	\times \left(\frac{\lambda_{\min}\lambda_{\min}'}{\lambda_{\max}^{12}\lambda_{\max}^{34}}\right)^\delta (N_{\min}N_{\min}')^\sigma (\|\varphi_{\Theta_1}\|_{V^2_{\theta_1}}\|\phi_{\Theta_2}\|_{V^2_{\theta_2}}\|\psi_{\Theta_3}\|_{V^2_{\theta_3}})^{1-\eta} \bigg)^2 \\
&\qquad =: \mathfrak S(\varphi,\phi,\psi).
\end{align*}
%We simply write $\|\psi_{\lambda,N}\|_{\ell^p_N} := (\sum_{N\in2^\mathbb N}N^{p\sigma}\|\psi_{\lambda,N}\|_{V^2_{\theta}}^p)^\frac1p$ and $\|\psi_\lambda\|_{\ell^p_\lambda}:=(\sum_{\lambda\in2^\mathbb N}\|\langle\Omega\rangle^\sigma\psi_\lambda\|^p_{V^2_\theta})^\frac1p$.
%If $N_4\approx N_3$, then for $N_4\lesssim N_2$, we must have $N_1\approx N_2$, and hence
%\begin{align*}
%\mathfrak S & \lesssim 	\sum_{\lambda_4,N_4}N_4^{2\sigma}\bigg( \sum_{\lambda_j,N_j}(\lambda_1^{-\frac12}\lambda_2^{-\frac12}\lambda_3^{-\frac12} \|\varphi_{\lambda_1,N_1}\|_{L^4_{t,x}}\|\phi_{\lambda_2,N_2}\|_{L^4_{t,x}}\|\psi_{\lambda_3,N_3}\|_{L^4_{t,x}})^\eta  \\
%& \qquad\qquad 	\times \left(\frac{\lambda_{\min}\lambda_{\min}'}{\lambda_{\max}^{12}\lambda_{\max}^{34}}\right)^\delta (N_{1}N_{3})^\epsilon (\|\varphi_{\lambda_1,N_1}\|_{V^2_{\theta_1}}\|\phi_{\lambda_2,N_2}\|_{V^2_{\theta_2}}\|\psi_{\lambda_3,N_3}\|_{V^2_{\theta_3}})^{1-\eta} \bigg)^2
%\end{align*}
Even if the square summation seems quite complicated, it would be straightforward with a simple observation: By H\"older inequality in $\ell^p$-space, we get
\begin{align*}
&\sum_{\lambda,N} \left[(\lambda^{-\frac12}N^\sigma\|\varphi_{\Theta}\|_{L^4_{t,x}})^\eta  (N^\sigma \|\varphi_{\Theta}\|_{V^2_{\theta}})^{1-\eta}\right]^2\\
%&\quad =	\left\|(\lambda^{-\frac12}N^\sigma\|\varphi_{\Theta}\|_{L^4_{t,x}})^\eta  (N^\sigma \|\varphi_{\Theta}\|_{V^2_{\theta}})^{1-\eta}\right\|_{\ell_{\lambda,N}^2}^2 \\
%&\quad \lesssim \left\|(\lambda^{-\frac12}N^\sigma\|\varphi_{\Theta}\|_{L^4_{t,x}})^\eta\right\|_{\ell^{\frac2\eta}_{\lambda,N}} \left\|(N^\sigma \|\varphi_{\Theta}\|_{V^2_{\theta}})^{1-\eta}\right\|_{\ell^{\frac{2}{1-\eta}}_{\lambda, N}} \\
 &\qquad\lesssim \left\|(\lambda^{-\frac12}N^\sigma\|\varphi_{\Theta}\|_{L^4_{t,x}})\right\|_{\ell^{2}_{\lambda, N}}^\eta \left\|(N^\sigma \|\varphi_{\Theta}\|_{V^2_{\theta}})\right\|_{\ell^{2}_{\lambda, N}}^{1-\eta}.
\end{align*}
Therefore, in order to prove our first main theorem, it suffices to prove the estimates \eqref{bi1}. At the very beginning, we further decompose the frequency-localised bilinear form $\varphi^\dagger_{\lambda_1,N_1}\beta\phi_{\lambda_2,N_2}$ into the modulation localised form:
$$
P_{\lambda_0}H_{N_0}(\varphi^\dagger_{\Theta_1}\beta\phi_{\Theta_2} ) = \sum_{h\in2^\mathbb Z}(\mathcal A_0(h) + \mathcal A_1(h) + \mathcal A_2(h)),
$$
where
\begin{align*}
\mathcal A_0(h) & = C_hP_{\lambda_0}H_{N_0}(C^{\theta_1}_{\le h}\varphi_{\Theta_1})^\dagger\beta C^{\theta_2}_{\le h}\phi_{\Theta_2},\\
\mathcal A_1(h) & = C_{\le h}P_{\lambda_0}H_{N_0}(C^{\theta_1}_{ h}\varphi_{\Theta_1})^\dagger\beta C^{\theta_2}_{\le h}\phi_{\Theta_2},\\
\mathcal A_2(h) & = C_{\le h}P_{\lambda_0}H_{N_0}(C^{\theta_1}_{\le h}\varphi_{\Theta_1})^\dagger\beta C^{\theta_2}_{ h}\phi_{\Theta_2} .	
\end{align*}

We first consider the frequency relation in a low modulation regime, i.e.,
$$
\lambda_0\lesssim\lambda_1\approx\lambda_2, \ h \lesssim \lambda_0,\ \textrm{and}\ \lambda_1\lesssim\lambda_0\approx\lambda_2,\  h\lesssim\lambda_1.
$$
We observe that the square-summation over caps and cubes gives that
\begin{align}\label{phi-1}
\begin{aligned}
&\left( \sum_{\mathsf q\in\mathcal Q_{\lambda_0}}\sum_{\kappa\in\mathcal C_{\mathtt r}}\|P_{\mathsf q}R_\kappa\varphi_{\Theta_1}\|_{L^4_tL^4_x}^2 \right)^\frac12 \\
	&\qquad\qquad\qquad \lesssim {\mathtt r}^{-2\eta}\left(\frac{\lambda_0}{\lambda_1}\right)^{-2\eta}(\lambda_0\lambda_1)^\frac14\left(\lambda_1^{-\frac12}\|\varphi_{\Theta_1}\|_{L^4_tL^4_x}\right)^\eta \|\varphi_{\Theta_1}\|_{V^2_{\theta_1}}^{1-\eta}.
	\end{aligned}
\end{align}
%\begin{proof}[Proof of \eqref{phi-1}]
%For $0<\eta<1$,
%\begin{align*}
%	&\left( \sum_{q\in\mathcal Q_{\lambda_0}}\sum_{\kappa\in\mathcal C_{\mathtt r}}\|P_qR_\kappa\varphi_{\Theta_1}\|_{L^4_tL^4_x}^2 \right)^\frac12 \\
%	 & \lesssim  \left( \sum_{q\in\mathcal Q_{\lambda_0}}\sum_{\kappa\in\mathcal C_\theta}\|P_qR_\kappa\varphi_{\Theta_1}\|_{L^4_tL^4_x}^2 \right)^\frac\eta2 \left( \sum_{q\in\mathcal Q_{\lambda_0}}\sum_{\kappa\in\mathcal C_\theta}\|P_qR_\kappa\varphi_{\Theta_1}\|_{L^4_tL^4_x}^2 \right)^\frac{1-\eta}{2} \\
%	 & \lesssim \left( \sum_{q\in\mathcal Q_{\lambda_0}}\sum_{\kappa\in\mathcal C_\theta}\|P_qR_\kappa\varphi_{\Theta_1}\|_{L^4_tL^4_x}^2 \right)^\frac\eta2 \left( {\mathtt r}^{-\frac\epsilon2}\left(\frac{\lambda_0}{\lambda_1}\right)^{-\frac\epsilon2}(\lambda_0\lambda_1)^\frac14 \|\varphi_{\Theta_1}\|_{V^2_{\theta_1}}  \right)^{1-\eta}
%	\end{align*}
%\end{proof}
In particular, we have
\begin{align}\label{phi-2}
\begin{aligned}
	\left( \sum_{\mathsf q\in\mathcal Q_{\lambda_0}}\sum_{\kappa\in\mathcal C_{\mathtt r}}\|P_{\mathsf q}R_\kappa\varphi_{\Theta_1}\|_{L^4_tL^4_x}^2 \right)^\frac12 \lesssim {\mathtt r}^{-\frac\epsilon2}\left(\frac{\lambda_0}{\lambda_1}\right)^{-\frac\epsilon2}(\lambda_0\lambda_1)^\frac14 \|\varphi_{\Theta_1}\|_{V^2_{\theta_1}}.
\end{aligned}	
\end{align}
The estimates of bilinear form $\varphi_{\Theta_1}^\dagger\beta\,\phi_{\Theta_2}$ in the low modulation regime is very similar to \cite{chohlee}. The only difference from our previous paper is that we apply \eqref{phi-1} instead of \eqref{phi-2}. To avoid unnecessary duplication we briefly present the proof of the bilinear form in this case. (See also Theorem 5.1 of \cite{chohlee}.) First we put ${\mathtt r}=(\frac{h\lambda_0}{\lambda_1\lambda_2})^\frac12$ and ${\mathtt r}^*=(\frac{h}{\lambda_0})^\frac12$.
For $\lambda_0\lesssim\lambda_1\approx\lambda_2$ with $h \lesssim \lambda_0$, we have
\begin{align*}
\|\mathcal A_0(h)\|_{L^2_tL^2_x} & \lesssim {\mathtt r}^{\sigma-4\eta}(h\lambda_0)^\frac12\left(\frac{\lambda_0}{\lambda_1}\right)^{\frac12-4\eta}(N_{\min}^{12})^\sigma\left(\lambda_1^{-\frac12}\|\varphi_{\Theta_1}\|_{L^4_tL^4_x}\lambda_2^{-\frac12}\|\phi_{\Theta_2}\|_{L^4_tL^4_x}\right)^\eta \\
& \qquad\qquad\times \left(\|\varphi_{\Theta_1}\|_{V^2_{\theta_1}}\|\phi_{\Theta_2}\|_{V^2_{\theta_2}}\right)^{1-\eta}.	
\end{align*}
 We also have
 \begin{align*}
 \|\mathcal A_0(h)\|_{L^2_tL^2_x} & = \sup_{\|\psi\|_{L^2_tL^2_x}\le 1}\left|\int C_h\psi_{\lambda_0,N_0} (C^{\theta_1}_{\le h}\varphi_{\Theta1})^\dagger\beta(C^{\theta_2}_{\le h}\phi_{\Theta_2})\,dtdx\right| \\
 & \lesssim 	\sup_{\|\psi\|_{L^2_tL^2_x}\le 1}\sum_{\substack{\kappa,\,\kappa'\in\mathcal C_{\mathtt r} \\ |\theta_1\kappa-\theta_2\kappa'|\lesssim{\mathtt r}}}\sum_{\substack{ \kappa''\in\mathcal C_{{\mathtt r}^*} \\ |\theta_1\kappa+\kappa''|\lesssim{\mathtt r}^* }}\sum_{\substack{\mathsf q,\, \mathsf q'\in\mathcal Q_{\lambda_0} \\ |\theta_1\mathsf q-
 \theta_2\mathsf q'|\lesssim\lambda_0}} \\
 & \qquad\qquad\left|\int_{\mathbb R^{1+3}} R_{\kappa''}C_h \psi_{\lambda_0,N_0} (R_\kappa P_{\mathsf q}C^{\theta_1}_{\le h}\varphi_{\Theta_1})^\dagger\beta(R_{\kappa'}P_{\mathsf q'}C^{\theta_2}_{\le h}\phi_{\Theta_2})\,dtdx\right| \\
 & \lesssim {\mathtt r}^{1-2\eta}({\mathtt r}^* N_0)^\sigma (\lambda_0\lambda_1)^\frac12\left(\frac{\lambda_0}{\lambda_1}\right)^{-4\eta} \left(\lambda_1^{-\frac12}\|\varphi_{\Theta_1}\|_{L^4_tL^4_x}\lambda_2^{-\frac12}\|\phi_{\Theta_2}\|_{L^4_tL^4_x}\right)^\eta \\
 & \qquad\qquad\times \left(\|\varphi_{\Theta_1}\|_{V^2_{\theta_1}}\|\phi_{\Theta_2}\|_{V^2_{\theta_2}}\right)^{1-\eta} \\
 & \lesssim (\mathtt r^*)^{\sigma-4\eta}(h\lambda_0)^\frac12 \left(\frac{\lambda_0}{\lambda_1}\right)^{\frac12-6\eta}(N_0)^\sigma\left(\lambda_1^{-\frac12}\|\varphi_{\Theta_1}\|_{L^4_tL^4_x}\lambda_2^{-\frac12}\|\phi_{\Theta_2}\|_{L^4_tL^4_x}\right)^\eta \\
 & \qquad\qquad\times \left(\|\varphi_{\Theta_1}\|_{V^2_{\theta_1}}\|\phi_{\Theta_2}\|_{V^2_{\theta_2}}\right)^{1-\eta}. 
  \end{align*}
In this section $\kappa, \kappa', q, q'$ in $|\theta \kappa + \theta'\kappa'|$ and $|\theta q + \theta' q'|$ denote the center points of the corresponding caps $\kappa, \kappa'$ and cubes $\mathsf q, \mathsf q'$. The summation on $h\lesssim\lambda_0$ gives the required bound.
 We can estimate $\mathcal A_1(h), \mathcal A_2(h)$ in a similar way. We omit the details.
  For $\lambda_1\lesssim\lambda_0\approx\lambda_2$, with $h\lesssim\lambda_1$, we have
\begin{align*}
\|\mathcal A_0(h)\|_{L^2_tL^2_x} & = 	\sup_{\|\psi\|_{L^2_tL^2_x}\lesssim1}\left|\int C_h\psi_{\lambda_0,N_0} (C^{\theta_1}_{\le h}\varphi_{\Theta_1})^\dagger\beta(C^{\theta_2}_{\le h}\phi_{\Theta_2})\,dtdx\right| \\
& \lesssim \sup_{\|\psi\|_{L^2_tL^2_x\lesssim1}}\sum_{\substack{\kappa,\,\kappa',\,\kappa''\in\mathcal C_{{\mathtt r}_{*}} \\ |\theta_1\kappa+\theta_2\kappa'|,|\kappa''+\theta_2\kappa'|\lesssim{\mathtt r}_{*}}}\sum_{\substack{\mathsf q',\, \mathsf q''\in\mathcal Q_{\lambda_1} \\ |\mathsf q' + \mathsf q''|\lesssim\lambda_1}}\\
& \qquad\qquad\left|\int R_{\kappa''}P_{\mathsf q''}C_h\psi_{\lambda_0,N_0} (R_\kappa C^{\theta_1}_{\le h}\varphi_{\Theta_1})^\dagger\beta(R_{\kappa'}P_{\mathsf q'}C^{\theta_2}_{\le h}\phi_{\Theta_2})\,dtdx\right| \\
& \lesssim \sup_{\|\psi\|_{L^2_tL^2_x\lesssim1}}\sum_{\substack{\kappa,\,\kappa',\,\kappa''\in\mathcal C_{{\mathtt r}_*} \\ |\theta_1\kappa+\theta_2\kappa'|,|\kappa''+\theta_2\kappa'|\lesssim {\mathtt r}_*}}\sum_{\substack{\mathsf q', \,\mathsf q''\in\mathcal Q_{\lambda_1} \\ |\mathsf q' + \mathsf q''|\lesssim\lambda_1}} \\
&\qquad\qquad {\mathtt r}_*\|R_{\kappa''}P_{\mathsf q''}C_h\psi_{\lambda_0,N_0}\|_{L^2_tL^2_x}\|R_\kappa C^{\theta_1}_{\le h}\varphi_{\Theta_1}\|_{L^4_tL^4_x}\|R_{\kappa'}P_{\mathsf q'}C_{\le h}^{\theta_2}\phi_{\Theta_2}\|_{L^4_tL^4_x} \\
& \lesssim ({\mathtt r}_*)^{1-2\eta}\left(\frac{\lambda_1}{\lambda_2} \right)^{-2\eta}\lambda_1^\frac12(\lambda_1\lambda_2)^\frac14({\mathtt r}_* N_{\min}^{012})^\sigma\left( \lambda_1^{-\frac12}\|\varphi_{\Theta_1}\|_{L^4_tL^4_x}\lambda_2^{-\frac12}\|\phi_{\Theta_2}\|_{L^4_tL^4_x} \right)^\eta \\
&\qquad\qquad\qquad\qquad\times \left( \|\varphi_{\Theta_1}\|_{V^2_{\theta_1}}\|\phi_{\Theta_2}\|_{V^2_{\theta_2}} \right)^{1-\eta} \\
& \lesssim (\mathtt r_*)^{\sigma-2\eta}h_1^\frac12\lambda_1^\frac14\lambda_2^\frac14 \left(\frac{\lambda_1}{\lambda_2}\right)^{-2\eta}(N_{\min}^{012})^\sigma\left( \lambda_1^{-\frac12}\|\varphi_{\Theta_1}\|_{L^4_tL^4_x}\lambda_2^{-\frac12}\|\phi_{\Theta_2}\|_{L^4_tL^4_x} \right)^\eta \\
&\qquad\qquad\qquad\qquad\times \left( \|\varphi_{\Theta_1}\|_{V^2_{\theta_1}}\|\phi_{\Theta_2}\|_{V^2_{\theta_2}} \right)^{1-\eta} ,
\end{align*}
where we chose ${\mathtt r}_*=(\frac{h}{\lambda_1})^\frac12$. The summation with respect to the modulation $h\lesssim\lambda_1$ yields the required estimates. As the case $\lambda_0\lesssim\lambda_1\approx\lambda_2$, we can treat the terms $\mathcal A_1(h)$ and $\mathcal A_2(h)$ in a similar manner. We omit the details.
\subsection{High modulation regime}
In this subsection we shall estimate the bilinear form $\varphi_{\Theta_1}^\dagger\beta\phi_{\Theta_2}$ in the regime: $\lambda_{\min}\ll h\ll\lambda_{\max}$.
We first consider $\lambda_0\lesssim\lambda_1\approx\lambda_2$ with $\lambda_0\ll h \ll\lambda_1$.
%Since $\mathfrak M_{\theta_1,\theta_2}\lesssim h \ll\lambda_1$, we have $\theta_1=\theta_2$ and $\mathfrak M_{\theta_1,\theta_2}\lesssim\lambda_0$. 
We observe that the angle between the diameter of support of $\widehat{\varphi}$ and $\widehat \phi$ is less than $\left(\frac{\lambda_0}{\lambda_1}\right)^\frac12$. We then decompose $\mathcal A_0(h)$ into the following:
\begin{align}
\begin{aligned}
\|\mathcal A_0(h)\|_{L^2_{t,x}} & \lesssim \|C_h P_{\lambda_0}H_{N_0}(C^{\theta_1}_{\approx h}\varphi_{\Theta_1})^\dagger\beta(C^{\theta_2}_{\ll h}\phi_{\Theta_2})\|_{L^2_{t,x}}  \\
&\qquad\qquad + \|C_h P_{\lambda_0}H_{N_0}(C^{\theta_1}_{\ll h}\varphi_{\Theta_1})^\dagger\beta(C^{\theta_2}_{\approx h}\phi_{\Theta_2})\|_{L^2_{t,x}} \\
&=: A_{0,1}+A_{0,2}. 	
\end{aligned}
\end{align}
It is enough to estimate the $A_{0,1}$ term because of symmetry. By almost orthogonal decomposition of small cubes of size $\lambda_0$ we see that
\begin{align*}
 A_{0,1} %& \lesssim \sup_{\|\psi\|_{L^2_tL^2_x}\lesssim1}\left(\frac{\lambda_0}{\lambda_1}\right)^\frac12\left|\int C_h\psi_{\lambda_0,N_0}(C^{\theta_1}_{\approx h}\varphi_{\Theta_1})^\dagger\beta(C^{\theta_2}_{\ll h}\phi_{\Theta_2})\,dtdx\right| \\
 %& \lesssim 	\sup_{\|\psi\|_{L^2_tL^2_x}\lesssim1}\left(\frac{\lambda_0}{\lambda_1}\right)^\frac12\sum_{\substack{q,q'\in\mathcal Q_{\lambda_0} \\ |q-q'|\lesssim\lambda_0}} \left|\int C_h\psi_{\lambda_0,N_0}(P_qC^{\theta_1}_{\approx h}\varphi_{\Theta_1})^\dagger\beta(P_{q'}C^{\theta_2}_{\ll h}\phi_{\Theta_2})\,dtdx\right| \\
 %& \lesssim \sup_{\|\psi\|_{L^2_tL^2_x}\lesssim1}\left(\frac{\lambda_0}{\lambda_1}\right)^\frac12\sum_{\substack{q,q'\in\mathcal Q_{\lambda_0} \\ |q-q'|\lesssim\lambda_0}} \|C_h\psi_{\lambda_0,N_0}\|_{L^2_{t,x}}\|P_qC^{\theta_1}_{\approx h}\varphi_{\Theta_1}\|_{L^4_{t,x}}\|P_{q'}C^{\theta_2}_{\ll h}\phi_{\Theta_2}\|_{L^4_{t,x}} \\
 & \lesssim \left(\frac{\lambda_0}{\lambda_1}\right)^\frac12\sum_{\substack{\mathsf q, \mathsf q'\in\mathcal Q_{\lambda_0} \\ |\mathsf q - \mathsf q'|\lesssim\lambda_0}}\|P_{\mathsf q}C^{\theta_1}_{\approx h}\varphi_{\Theta_1}\|_{L^4_{t,x}}\|P_{\mathsf q'}C^{\theta_2}_{\ll h}\phi_{\Theta_2}\|_{L^4_{t,x}} \\
 & \lesssim  \left(\frac{\lambda_0}{\lambda_1}\right)^\frac12 \bigg(\sum_{\mathsf q'}\|P_{\mathsf q'}C^{\theta_2}_{\ll h}\phi_{\Theta_2}\|_{L^4_{t,x}}^2 \bigg)^\frac12 \bigg( \sum_{\mathsf q'} \bigg(\sum_{\mathsf q:|\mathsf q - \mathsf q'|\lesssim\lambda_0}\|P_{\mathsf q}C^{\theta_1}_{\approx h}\varphi_{\Theta_1}\|_{L^4_{t,x}}\bigg)^2 \bigg)^\frac12 \\
 & \lesssim \left(\frac{\lambda_0}{\lambda_1}\right)^\frac12 \bigg(\sum_{\mathsf q'}\|P_{\mathsf q'}C^{\theta_2}_{\ll h}\phi_{\Theta_2}\|_{L^4_{t,x}}^2 \bigg)^\frac12  \bigg(\sum_{\mathsf q}\|P_{\mathsf q}C^{\theta_1}_{\approx h}\varphi_{\Theta_1}\|_{L^4_{t,x}}^2 \bigg)^\frac12.
\end{align*}
For the square-summation of $\phi_{\Theta_2}$ in the third inequality we apply similar form of \eqref{phi-1} to obtain
\begin{align}\label{square-sum-q}
	\bigg(\sum_{\mathsf q'}\|P_{{\mathsf q}'}C^{\theta_2}_{\ll h}\phi_{\Theta_2}\|_{L^4_{t,x}}^2 \bigg)^\frac12 & \lesssim \left(\frac{\lambda_0}{\lambda_2}\right)^{-2\eta}(\lambda_0\lambda_2)^\frac14 \left(\lambda_2^{-\frac12} \|\phi_{\Theta_2}\|_{L^4_{t,x}}\right)^\eta \|\phi_{\Theta_2}\|_{V^2_{\theta_2}}^{1-\eta}
\end{align}
and for the summation of $\varphi_{\Theta_1}$ we write
\begin{align}
	\bigg(\sum_{{\mathsf q}}&\|P_{{\mathsf q}}C^{\theta_1}_{\approx h}\varphi_{\Theta_1}\|_{L^4_{t,x}}^2 \bigg)^\frac12\nonumber\\
 & \lesssim \left( \sum_{{\mathsf q}}\left(\|P_{{\mathsf q}}C^{\theta_1}_{\approx h}\varphi_{\Theta_1}\|_{L^4_{t,x}}^\eta \|P_{{\mathsf q}}C^{\theta_1}_{\approx h}\varphi_{\Theta_1}\|_{L^4_{t,x}}^{1-\eta} \right)^2 \right)^\frac12 \nonumber\\
	& \lesssim \left(\sum_{\mathsf q}\|P_{{\mathsf q}}C^{\theta_1}_{\approx h}\varphi_{\Theta_1}\|_{L^4_{t,x}}^2\right)^\frac\eta2 \left(\sum_{\mathsf q} \|P_{{\mathsf q}}C^{\theta_1}_{\approx h}\varphi_{\Theta_1}\|_{L^4_{t,x}}^2 \right)^{\frac{1-\eta}{2}}\nonumber \\
	& \lesssim \left(\sum_{\mathsf q}\|P_{{\mathsf q}}C^{\theta_1}_{\approx h}\varphi_{\Theta_1}\|_{L^4_{t,x}}^2\right)^\frac\eta2 \lambda_0^{\frac34(1-\eta)} \left(\sum_{\mathsf q} \|P_{{\mathsf q}}C^{\theta_1}_{\approx h}\varphi_{\Theta_1}\|_{L^4_{t}L^2_x}^2 \right)^{\frac{1-\eta}{2}} \nonumber\\
	& \lesssim \left(\sum_{\mathsf q}\|P_{{\mathsf q}}C^{\theta_1}_{\approx h}\varphi_{\Theta_1}\|_{L^4_{t,x}}^2\right)^\frac\eta2 \lambda_0^{\frac34(1-\eta)}h^{-\frac14(1-\eta)}\|\varphi_{\Theta_1}\|_{V^2_{\theta_1}}^{1-\eta} \nonumber\\
	& \lesssim \lambda_1^\frac\eta2\left(\frac{\lambda_0}{\lambda_1}\right)^{-2\eta}\left(\frac{\lambda_0}{h}\right)^{\frac14(1-\eta)}\lambda_0^{\frac12(1-\eta)}\left(\lambda_1^{-\frac12}\|\varphi_{\Theta_1}\|_{L^4_tL^4_x}\right)^\eta \|\varphi_{\Theta_1}\|_{V^2_{\theta_1}}^{1-\eta}.\label{high-mod-eta}
\end{align}
Here we used the Bernstein inequality, and the bound of high modulation regime \eqref{bdd-high-mod} and then \eqref{square-sum-q}.
%In other words, we have
%\begin{align}\begin{aligned}
%&\left(\sum_{{\mathsf q}\in\mathcal Q_{\lambda_0}}\|P_{{\mathsf q}}C^{\theta_1}_{\approx h}\varphi_{\Theta_1}\|_{L^4_{t,x}}^2 \right)^\frac12\\	&\qquad\lesssim \lambda_1^\frac\eta2\left(\frac{\lambda_0}{\lambda_1}\right)^{-\frac\epsilon2}\left(\frac{\lambda_0}{h}\right)^{\frac14(1-\eta)}\lambda_0^{\frac12(1-\eta)}\left(\lambda_1^{-\frac12}\|\varphi_{\Theta_1}\|_{L^4_tL^4_x}\right)^\eta \|\varphi_{\Theta_1}\|_{V^2_{\theta_1}}^{1-\eta}.
%\end{aligned}\end{align}
Thus we get
\begin{align*}
\sum_{\lambda_0\ll h\ll\lambda_1}A_{0,1} & \lesssim \lambda_0\left( \frac{\lambda_0}{\lambda_1}\right)^{\frac14-5\eta}	\left( \lambda_1^{-\frac12}\|\varphi_{\Theta_1}\|_{L^4_tL^4_x}\lambda_2^{-\frac12}\|\phi_{\Theta_2}\|_{L^4_tL^4_x} \right)^\eta \\
&\qquad\qquad\qquad\qquad\qquad\times \left( \|\varphi_{\Theta_1}\|_{V^2_{\theta_1}}\|\phi_{\Theta_2}\|_{V^2_{\theta_2}} \right)^{1-\eta}.
\end{align*}
%Here, the $L^4$-Strichartz estimates give
%\begin{align*}
%	\left(\sum_q\|P_{q}C^{\theta_1}_{\approx h}\varphi_{\Theta_1}\|_{L^4_{t,x}}^2\right)^\frac\eta2 & \lesssim \left( \left(\frac{\lambda_0}{\lambda_1}\right)^{-\frac\epsilon2}(\lambda_0\lambda_1)^\frac14 \right)^\eta
%\end{align*}
For $\mathcal A_1(h)$, we have the similar decomposition as follows
\begin{align*}
\|\mathcal A_1(h)\|_{L^2_tL^2_x} & \lesssim \|C_{\approx h}P_{\lambda_0}H_{N_0}(C^{\theta_1}_{ h}\varphi_{\Theta_1})^\dagger\beta C^{\theta_2}_{\ll h}\phi_{\Theta_2}\|_{L^2_tL^2_x} \\
& \qquad\qquad + \|C_{\ll h}P_{\lambda_0}H_{N_0}(C^{\theta_1}_{ h}\varphi_{\Theta_1})^\dagger\beta C^{\theta_2}_{\approx h}\phi_{\Theta_2}\|_{L^2_tL^2_x} \\
&=: A_{1,1}+A_{1,2}.
\end{align*}
The term $A_{1,1}$ can be treated in the identical manner as $A_{0,1}$. For $A_{1,2}$, we write
\begin{align*}
A_{1,2} & \lesssim \left(\frac{\lambda_0}{\lambda_1}\right)^\frac12\sum_{\substack{{\mathsf q},\,{\mathsf q}'\in\mathcal Q_{\lambda_0} \\ |{\mathsf q}-{\mathsf q}'|\lesssim\lambda_0}}	\|C_{\ll h}P_{\lambda_0}H_{N_0}(P_qC^{\theta_1}_{ h}\varphi_{\Theta_1})^\dagger\beta P_{{\mathsf q}'}C^{\theta_2}_{\approx h}\phi_{\Theta_2}\|_{L^2_tL^2_x} \\
& \lesssim \left(\frac{\lambda_0}{\lambda_1}\right)^\frac12 \left(\sum_{\mathsf q}\|P_qC^{\theta_1}_h\varphi_{\Theta_1}\|_{L^4_tL^4_x}^2 \right)^\frac12 \left( \sum_{{\mathsf q}'}\|P_{{\mathsf q}'}C^{\theta_2}_{\approx h}\phi_{\Theta_2}\|_{L^4_tL^4_x}^2 \right)^\frac12.
\end{align*}
We use \eqref{high-mod-eta} to obtain
\begin{align*}
A_{1,2} & \lesssim \left( \frac{\lambda_0}{\lambda_1} \right)^\frac12 \lambda_1^\eta \left(\frac{\lambda_0}{\lambda_1}\right)^{-4\eta}\left( \frac{\lambda_0}{h}\right)^{\frac12(1-\eta)}\lambda_0^{1-\eta}	\left( \lambda_1^{-\frac12}\|\varphi_{\Theta_1}\|_{L^4_tL^4_x}\lambda_2^{-\frac12}\|\phi_{\Theta_2}\|_{L^4_tL^4_x} \right)^\eta \\
&\qquad\qquad\qquad\qquad\times \left( \|\varphi_{\Theta_1}\|_{V^2_{\theta_1}}\|\phi_{\Theta_2}\|_{V^2_{\theta_2}} \right)^{1-\eta} \\
& \lesssim \lambda_0\left( \frac{\lambda_0}{h}\right)^{\frac12(1-\eta)}\left(\frac{\lambda_0}{\lambda_1}\right)^{\frac12-5\eta}\left( \lambda_1^{-\frac12}\|\varphi_{\Theta_1}\|_{L^4_tL^4_x}\lambda_2^{-\frac12}\|\phi_{\Theta_2}\|_{L^4_tL^4_x} \right)^\eta \\
&\qquad\qquad\qquad\qquad\times \left( \|\varphi_{\Theta_1}\|_{V^2_{\theta_1}}\|\phi_{\Theta_2}\|_{V^2_{\theta_2}} \right)^{1-\eta},
\end{align*}
and hence the summation with respect to the modulation $\lambda_0\ll h\ll\lambda_1$ gives the desired estimate.
Now we consider the case $\lambda_1\lesssim\lambda_0\approx\lambda_2$ with $\lambda_1\ll h\ll \lambda_0$.
%\begin{align*}
%A_{1,1} & := \sup_{\|\psi\|_{L^2_tL^2_x}\lesssim1}\left|\int C_{\approx h}\psi_{\lambda_0,N_0}(C^{\theta_1}_d \varphi_{\Theta_1})^\dagger\beta C^{\theta_2}_{\ll h}\phi_{\Theta_2}\,dtdx \right| \\
%& \lesssim 	\sup_{\|\psi\|_{L^2_tL^2_x}\lesssim1}\sum_{\substack{q,q'\in\mathcal Q_{\lambda_0} \\ |q-q'|\lesssim\lambda_0}}\|C_{\approx h}\psi_{\lambda_0,N_0}\|_{L^2_{t,x}}\|P_qC^{\theta_1}_d\varphi_{\Theta_1}\|_{L^4_{t,x}}\|P_{q'}C^{\theta_2}_{\ll h}\phi_{\Theta_2}\|_{L^4_{t,x}} \\
%& \lesssim \sum_{\substack{q,q'\in\mathcal Q_{\lambda_0} \\ |q-q'|\lesssim\lambda_0}}\|P_qC^{\theta_1}_d\varphi_{\Theta_1}\|_{L^4_{t,x}}\|P_{q'}C^{\theta_2}_{\ll h}\phi_{\Theta_2}\|_{L^4_{t,x}}
%\end{align*}
\begin{align*}
A_{0,1} & \lesssim \sup_{\|\psi\|_{L^2_tL^2_x}\lesssim1}	\sum_{\substack{{\mathsf q}',\,{\mathsf q}''\in\mathcal Q_{\lambda_1} \\ |{\mathsf q}'-{\mathsf q}''|\lesssim\lambda_1}}\|P_{{\mathsf q}''}C_h\psi_{\lambda_0,N_0}\|_{L^2_tL^2_x}\|C^{\theta_1}_{\approx h}\varphi_{\Theta_1}\|_{L^4_tL^4_x}\|P_{{\mathsf q}'}C^{\theta_2}_{\ll h}\phi_{\Theta_2}\|_{L^4_tL^4_x} \\
& \lesssim \|C^{\theta_1}_{\approx h}\varphi_{\Theta_1}\|_{L^4_tL^4_x}\left( \sum_{{\mathsf q}'}\|P_{{\mathsf q}'}C^{\theta_2}_{\ll h}\phi_{\Theta_2}\|_{L^4_tL^4_x}^2 \right)^\frac12.
\end{align*}
Here, for the $\varphi_{\Theta_1}$, we simply use the Bernstein's inequality, boundedness \eqref{bdd-high-mod}, and the uniform disposability \eqref{uni-dis1}   to get
\begin{align}\label{stri-mod-eta}
\begin{aligned}
\|C^{\theta_1}_{\approx h}\varphi_{\Theta_1}\|_{L^4_tL^4_x} & = \|C^{\theta_1}_{\approx h}\varphi_{\Theta_1}\|_{L^4_tL^4_x}^\eta\|C^{\theta_1}_{\approx h}\varphi_{\Theta_1}\|_{L^4_tL^4_x}^{1-\eta} \\
& \lesssim \|C^{\theta_1}_{\approx h}\varphi_{\Theta_1}\|_{L^4_tL^4_x}^\eta \lambda_1^{\frac34(1-\eta)} \|C^{\theta_1}_{\approx h}\varphi_{\Theta_1}\|_{L^4_tL^2_x}^{1-\eta} \\
& \lesssim \lambda_1^{\frac\eta2}\left(\lambda_1^{-\frac12}\|\varphi_{\Theta_1}\|_{L^4_tL^4_x}\right)^\eta \lambda_1^{\frac34(1-\eta)}h^{-\frac14(1-\eta)}\|\varphi_{\Theta_1}\|_{V^2_{\theta_1}}^{1-\eta}	,
\end{aligned}
\end{align}
 We recall \eqref{square-sum-q} and get
\begin{align*}
A_{0,1}	& \lesssim \left(\frac{\lambda_1}{h}\right)^{\frac14(1-\eta)}\lambda_1^\frac34\lambda_0^\frac14\left(\frac{\lambda_1}{\lambda_0}\right)^{-2\eta}\left( \lambda_1^{-\frac12}\|\varphi_{\Theta_1}\|_{L^4_tL^4_x}\lambda_2^{-\frac12}\|\phi_{\Theta_2}\|_{L^4_tL^4_x} \right)^\eta \\
&\qquad\qquad\qquad\qquad\times \left( \|\varphi_{\Theta_1}\|_{V^2_{\theta_1}}\|\phi_{\Theta_2}\|_{V^2_{\theta_2}} \right)^{1-\eta} \\
& \lesssim \lambda_0 \left( \frac{\lambda_1}{h} \right)^{\frac14(1-\eta)}\left(\frac{\lambda_1}{\lambda_0}\right)^{\frac34-2\eta}\left( \lambda_1^{-\frac12}\|\varphi_{\Theta_1}\|_{L^4_tL^4_x}\lambda_2^{-\frac12}\|\phi_{\Theta_2}\|_{L^4_tL^4_x} \right)^\eta \\
&\qquad\qquad\qquad\qquad\times \left( \|\varphi_{\Theta_1}\|_{V^2_{\theta_1}}\|\phi_{\Theta_2}\|_{V^2_{\theta_2}} \right)^{1-\eta}.
\end{align*}
Similarly, for $A_{0,2}$ we write
\begin{align*}
A_{0,2} & \lesssim \|C^{\theta_1}_{\ll h}\varphi_{\Theta_1}\|_{L^4_tL^4_x}\left(\sum_{{\mathsf q}'\in\mathcal Q_{\lambda_1}}\|P_{{\mathsf q}'}C^{\theta_2}_{\approx h}\phi_{\Theta_2}\|_{L^4_tL^4_x}^2\right)^\frac12.	
\end{align*}
Then we follow a similar argument to the estimate of \eqref{high-mod-eta} to get
\begin{align*}
	\left(\sum_{{\mathsf q}'\in\mathcal Q_{\lambda_1}}\|P_{{\mathsf q}'}C^{\theta_2}_{\approx h}\phi_{\Theta_2}\|_{L^4_tL^4_x}^2\right)^\frac12 & \lesssim \lambda_2^\frac\eta2\left(\frac{\lambda_1}{\lambda_0}\right)^{-2\eta}\left(\frac{\lambda_1}{h}\right)^{\frac14(1-\eta)}\lambda_1^{\frac12(1-\eta)}\left( \lambda_2^{-\frac12}\|\phi_{\Theta_2}\|_{L^4_tL^4_x} \right)^\eta \|\phi_{\Theta_2}\|_{V^2_{\theta_2}}^{1-\eta}.
\end{align*}
On the other hand, for $\varphi_{\Theta_1}$, we  use the uniform disposability \eqref{uni-dis1} and $L^4$-Strichartz estimates to obtain
\begin{align*}
\|C^{\theta_1}_{\ll h}\varphi_{\Theta_1}\|_{L^4_tL^4_x} & \lesssim 	\|\varphi_{\Theta_1}\|_{L^4_tL^4_x}^{\eta}\|\varphi_{\Theta_1}\|_{L^4_tL^4_x}^{1-\eta} \\
& \lesssim \lambda_1^\frac12 (\lambda_1^{-\frac12}\|\varphi_{\Theta_1}\|_{L^4_tL^4_x})^\eta \|\varphi_{\Theta_1}\|_{V^2_{\theta_1}}^{1-\eta}.
\end{align*}
Combining these, we get
\begin{align*}
A_{0,2} & \lesssim \lambda_0\left(\frac{\lambda_1}{\lambda_0}\right)^{1-2\eta}\left(\frac{\lambda_1}{h}\right)^{\frac14(1-\eta)}\left( \lambda_1^{-\frac12}\|\varphi_{\Theta_1}\|_{L^4_tL^4_x}\lambda_2^{-\frac12}\|\phi_{\Theta_2}\|_{L^4_tL^4_x} \right)^\eta \\
&\qquad\qquad\qquad\qquad\times \left( \|\varphi_{\Theta_1}\|_{V^2_{\theta_1}}\|\phi_{\Theta_2}\|_{V^2_{\theta_2}} \right)^{1-\eta}.	
\end{align*}
Thus the summation of $A_{0,1}$ and $A_{0,2}$ with respect to $\lambda_1\ll h\ll\lambda_0$ gives the required bound.
We now consider $\mathcal A_1(h)$. We decompose it into
\begin{align*}
\|\mathcal A_1(h)\|_{L^2_tL^2_x} & \lesssim \|C_{\approx h}P_{\lambda_0}H_{N_0}(C^{\theta_1}_{ h}\varphi_{\Theta_1})^\dagger\beta C^{\theta_2}_{\ll h}\phi_{\Theta_2}\|_{L^2_tL^2_x} \\
& \qquad\qquad + \|C_{\ll h}P_{\lambda_0}H_{N_0}(C^{\theta_1}_{ h}\varphi_{\Theta_1})^\dagger\beta C^{\theta_2}_{\approx h}\phi_{\Theta_2}\|_{L^2_tL^2_x} \\
&:= A_{1,1}+A_{1,2}.
\end{align*}
Since $A_{1,1}$ can be treated in the identical manner as $A_{0,1}$, we only treat the $A_{1,2}$ term. By orthogonal decompositions by smaller cubes of size $\lambda_1$ and following the argument in \eqref{stri-mod-eta} and estimate \eqref{high-mod-eta} we have
\begin{align*}
A_{1,2} & \lesssim \|C^{\theta_1}_h \varphi_{\Theta_1}\|_{L^4_tL^4_x}\left(\sum_{{\mathsf q}'\in\mathcal Q_{\lambda_1}}\|P_{{\mathsf q}'}C^{\theta_2}_{\approx h}\phi_{\Theta_2}\|_{L^4_tL^4_x}^2 \right)^\frac12	\\
& \lesssim  \lambda_1^{\frac\eta2}(\lambda_1^{-\frac12}\|\varphi_{\Theta_1}\|_{L^4_tL^4_x})^\eta \lambda_1^{\frac34(1-\eta)}h^{-\frac14(1-\eta)}\|\varphi_{\Theta_1}\|_{V^2_{\theta_2}}^{1-\eta}	 \\
&\times \lambda_2^\frac\eta2\left(\frac{\lambda_1}{\lambda_0}\right)^{-\epsilon}\left(\frac{\lambda_1}{h}\right)^{\frac14(1-\eta)}\lambda_1^{\frac12(1-\eta)}\left( \lambda_2^{-\frac12}\|\phi_{\Theta_2}\|_{L^4_tL^4_x} \right)^\eta \|\phi_{\Theta_2}\|_{V^2_{\theta_2}}^{1-\eta} \\
& \lesssim \lambda_1 \left(\frac{\lambda_1}{h} \right)^{\frac12(1-\eta)}\left( \frac{\lambda_1}{\lambda_2} \right)^{-\eta}\left(\lambda_1^{-\frac12}\lambda_2^{-\frac12}\|\varphi_{\Theta_1}\|_{L^4_tL^4_x}\|\phi_{\Theta_2}\|_{L^4_tL^4_x} \right)^\eta \\
& \qquad\qquad \times \left(\|\varphi_{\Theta_1}\|_{V^2_{\theta_1}}\|\phi_{\Theta_2}\|_{V^2_{\theta_2}} \right)^{1-\eta}.
\end{align*}
Thus we have 
$$
\sum_{\lambda_1\ll h\ll\lambda_0}A_{1,2} \lesssim \lambda_1\left(\frac{\lambda_1}{\lambda_0}\right)^{\frac12(1-3\eta)}\left(\lambda_1^{-\frac12}\lambda_2^{-\frac12}\|\varphi_{\Theta_1}\|_{L^4_tL^4_x}\|\phi_{\Theta_2}\|_{L^4_tL^4_x} \right)^\eta  \left(\|\varphi_{\Theta_1}\|_{V^2_{\theta_1}}\|\phi_{\Theta_2}\|_{V^2_{\theta_2}} \right)^{1-\eta}.
$$
The estimate of $\mathcal A_2(h)$ is then given straightforwardly. Indeed, we decompose $\mathcal A_2(h)$ into
\begin{align*}
\|\mathcal A_2(h)\|_{L^2_tL^2_x} & \lesssim \|C_{\approx h}P_{\lambda_0}H_{N_0}(C^{\theta_1}_{\ll h}\varphi_{\Theta_1})^\dagger\beta C^{\theta_2}_{ h}\phi_{\Theta_2}\|_{L^2_tL^2_x} \\
& \qquad\qquad + \|C_{\ll h}P_{\lambda_0}H_{N_0}(C^{\theta_1}_{\approx  h}\varphi_{\Theta_1})^\dagger\beta C^{\theta_2}_{ h}\phi_{\Theta_2}\|_{L^2_tL^2_x} \\
&:= A_{2,1}+A_{2,2}.
\end{align*}
We see that $A_{2,1}$ is essentially the same as $A_{0,2}$ whereas the $A_{2,2}$ is same as $A_{1,2}$. Hence we complete the proof of \eqref{0-tri} when $d=3$.
%\subsection{High modulation: $h\gtrsim\lambda_{\max}$}

%%%%%%%%%%%%%%%%%%%%%%%%%%%%%%%%%%%%%%%%%%%%%%%%%%%%%%%%%%%%%%%%%%%%%%%%%%%%%%%%%%%%%%%%%%%%%%%%%%%%%%%%%%%%%%%%%%%%%%%%%%%%%%%%%%%%%%%%%%%%%
%%%%%%%%%%%%%%%%%%%%%%%%%%%%%%%%%%%%%%%%%%%%%%%%%%%%%%%%%%%%%%%%%%%%%%%%%%%%%%%%%%%%%%%%%%%%%%%%%%%%%%%%%%%%%%%%%%%%%%%%%%%%%%%%%%%%%%%%%%%%%

\section{Multilinear estimates I\!I: Two spatial dimensions}\label{multi-2d}
Since the angular regularity is not involved in our analysis, we use the notation $\Theta = (\theta, \lambda)$ and $\psi_\Theta = P_{\lambda}\psi_\theta$ for $\lambda \in 2^{\mathbb N}, \theta \in \{+, -\}$.
We write
\begin{align*}
\mathfrak I_\lambda&:=\sum_{\lambda_4\ge1}\left(  \sup_{\|\uppsi\,\,\|_{V^2_{\theta_4}}\le 1}\sum_{\lambda_0, \Theta_j}\bigg|\int_{\mathbb R^{1+2}}\Lambda^{-2}P_{\lambda_0}(\varphi_{\Theta_1}^\dagger\beta\phi_{\Theta_2})\widetilde P_{\lambda_0}(\uppsi_{\Theta_4}^\dagger\beta\psi_{\Theta_3})\,dtdx \bigg| \right)^2 \\
& \le (\textrm{Low modulation})+(\textrm{High modulation})	\\
& =: \mathfrak I_{\lambda\gg h}+\mathfrak I_{\lambda\lesssim h}.
\end{align*}
The estimate of $\mathfrak I_{\lambda\lesssim h}$ is essentially the same as the three dimensional case. We omit the details.
In advance we note that in the low-modulation-regime, i.e., $\lambda\gg d$ with High$\times$High modulation such as $\lambda_0\lesssim\lambda_1\approx\lambda_2$ and $\lambda_0\lesssim\lambda_3\approx\lambda_4$, we must have $\theta_1=\theta_2$ and $\theta_3=\theta_4$. We also observe that the estimate of $\mathfrak I_{\lambda\gg h}$ is very straightforward in view of the previous result \cite{cholee}. It only needs a small modification in multilinear estimates. Indeed, we write
\begin{align*}
\mathfrak I_{\lambda\gg h} & \lesssim \sum_{\lambda_4\ge1}\left(\sup_{\|\uppsi\,\,\|_{V^2_{\theta_4}}=1}\left(\mathbf I_1+\mathbf I_2+\mathbf I_3\right)\right)^2,	
\end{align*}
where
\begin{align*}
\mathbf I_1 = \sum_{\substack{\lambda_1,\lambda_2\ge1 \\ \lambda_0\ll\lambda_3\approx\lambda_4}}|\cdots|,\quad \mathbf I_2=\sum_{\substack{\lambda_1,\lambda_2\ge1 \\  \lambda_3\ll\lambda_0\approx\lambda_4}}|\cdots|,\quad \mathbf I_3=\sum_{\substack{\lambda_1,\lambda_2\ge1 \\ \lambda_4\ll\lambda_0\approx\lambda_3}}|\cdots|.
\end{align*}
We further divide $\mathbf I_j$, $j=1,2,3$ as follows:
\begin{align*}
\mathbf I_1 & \lesssim \sum_{\substack{\lambda_0\ll\lambda_1\approx\lambda_2 \\ \lambda_0\ll\lambda_3\approx\lambda_4}}|\cdots|+\sum_{\substack{\lambda_1\ll\lambda_0\approx\lambda_2 \\ \lambda_0\ll\lambda_3\approx\lambda_4}}|\cdots| + \sum_{\substack{\lambda_2\ll\lambda_0\approx\lambda_1\\ \lambda_0\ll\lambda_3\approx\lambda_4}}|\cdots| \\
& =: \mathbf I_{11}+\mathbf I_{12}+\mathbf I_{13},\\
\mathbf I_2 &\lesssim \sum_{\substack{\lambda_0\ll\lambda_1\approx\lambda_2 \\ \lambda_3\ll\lambda_0\approx\lambda_4}}|\cdots|+\sum_{\substack{\lambda_1\ll\lambda_0\approx\lambda_2 \\ \lambda_3\ll\lambda_0\approx\lambda_4}}|\cdots|+\sum_{\substack{\lambda_2\ll\lambda_0\approx\lambda_1 \\ \lambda_3\ll\lambda_0\approx\lambda_4}}|\cdots| \\
&=: \mathbf I_{21}+\mathbf I_{22}+\mathbf I_{23}, \\
\mathbf I_3 &\lesssim \sum_{\substack{\lambda_0\ll\lambda_1\approx\lambda_2 \\ \lambda_4\ll\lambda_0\approx\lambda_3}}|\cdots|+\sum_{\substack{\lambda_1\ll\lambda_0\approx\lambda_2 \\ \lambda_4\ll\lambda_0\approx\lambda_3}}|\cdots|+\sum_{\substack{\lambda_2\ll\lambda_0\approx\lambda_1 \\ \lambda_4\ll\lambda_0\approx\lambda_3}}|\cdots| \\
&=: \mathbf I_{31}+\mathbf I_{32}+\mathbf I_{33}. 	
\end{align*}
\subsection{Estimates of $\mathbf I_1$}
If $\lambda_0\ll \lambda_1\approx\lambda_2$, we use in order the H\"older inequality and bilinear estimates \eqref{bi-2d-hh} for the bilinear forms involved in the $(1-\eta)$ exponent and then simply use H\"older inequality for the bilinear forms involved in the $\eta$ exponent. We also use $L^4_{t,x}$-Strichartz estimates Proposition \ref{stri-2d} for $\uppsi_{\Theta_4}$ to obtain the $V^2$ norm. The remaining task is very straightforward as follows:
\begin{align*}
\mathbf I_{11} & \lesssim \sum_{\substack{\lambda_0\ll\lambda_1\approx\lambda_2\\\lambda_0\ll\lambda_3\approx\lambda_4\\\lambda_0 \ge 1}}\lambda_0^{-2}\|P_{\lambda_0}(\varphi_{\Theta_1}^\dagger\beta\phi_{\Theta_2})\|_{L^2_{t,x}}\|P_{\lambda_0}(\uppsi_{\Theta_4}^\dagger\beta\psi_{\Theta_3})\|_{L^2_{t,x}} \\
& \lesssim	\sum_{\substack{\lambda_0\ll\lambda_1\approx\lambda_2\\\lambda_0\ll\lambda_3\approx\lambda_4}}\lambda_0^{-2}\left( \|\varphi_{\Theta_1}\|_{L^4_{t,x}}\|\phi_{\Theta_2}\|_{L^4_{t,x}}\|\psi_{\Theta_3}\|_{L^4_{t,x}}\|\uppsi_{\Theta_4}\|_{L^4_{t,x}} \right)^\eta \\
&\qquad\qquad\qquad\times \left(\|P_{\lambda_0}(\varphi_{\Theta_1}^\dagger\beta\phi_{\Theta_2})\|_{L^2_{t,x}}\|P_{\lambda_0}(\uppsi_{\Theta_4}^\dagger\beta\psi_{\Theta_3})\|_{L^2_{t,x}}\right)^{1-\eta} \\
& \lesssim \sum_{\substack{\lambda_0\ll\lambda_1\approx\lambda_2\\\lambda_0\ll\lambda_3\approx\lambda_4}}\lambda_0^{-2}(\lambda_1\lambda_3)^\eta \left( \lambda_1^{-\frac12}\lambda_2^{-\frac12}\lambda_3^{-\frac12}\lambda_4^{-\frac12}\|\varphi_{\Theta_1}\|_{L^4_{t,x}}\|\phi_{\Theta_2}\|_{L^4_{t,x}}\|\psi_{\Theta_3}\|_{L^4_{t,x}}\|\uppsi_{\Theta_4}\|_{L^4_{t,x}} \right)^\eta \\
& \times \lambda_0^{1-\eta}\left(\frac{\lambda_0}{\lambda_1}\right)^{\frac12(1-\eta)}\left(\frac{\lambda_0}{\lambda_3}\right)^{\frac12(1-\eta)}\left( \|\varphi_{\Theta_1}\|_{V^2_{\theta_1}}\|\phi_{\Theta_2}\|_{V^2_{\theta_2}}\|\psi_{\Theta_3}\|_{V^2_{\theta_3}}\|\uppsi_{\Theta_4}\|_{V^2_{\theta_4}} \right)^{1-\eta} \\
& \lesssim \sum_{\substack{\lambda_0\ll\lambda_1\approx\lambda_2\\\lambda_0\ll\lambda_3\approx\lambda_4}}\lambda_0^{-1+\eta}\left(\frac{\lambda_0}{\lambda_1}\right)^{\frac12(1-3\eta)}\left(\frac{\lambda_0}{\lambda_3}\right)^{\frac12(1-3\eta)}\left( \lambda_1^{-\frac12}\lambda_2^{-\frac12}\lambda_3^{-\frac12}\|\varphi_{\Theta_1}\|_{L^4_{t,x}}\|\phi_{\Theta_2}\|_{L^4_{t,x}}\|\psi_{\Theta_3}\|_{L^4_{t,x}} \right)^\eta \\
&\qquad\qquad\qquad \times \left( \|\varphi_{\Theta_1}\|_{V^2_{\theta_1}}\|\phi_{\Theta_2}\|_{V^2_{\theta_2}}\|\psi_{\Theta_3}\|_{V^2_{\theta_3}} \right)^{1-\eta} \|\uppsi_{\Theta_4}\|_{V^2_{\theta_4}} \\
& \lesssim \sum_{\lambda_1\approx\lambda_2,\lambda_3\approx\lambda_4}\left(\frac{\lambda_{\rm med}}{\lambda_{\max}}\right)^{\frac12(1-3\eta)}\left( \lambda_1^{-\frac12}\lambda_2^{-\frac12}\lambda_3^{-\frac12}\|\varphi_{\Theta_1}\|_{L^4_{t,x}}\|\phi_{\Theta_2}\|_{L^4_{t,x}}\|\psi_{\Theta_3}\|_{L^4_{t,x}} \right)^\eta \\
&\qquad\qquad\qquad \times \left( \|\varphi_{\Theta_1}\|_{V^2_{\theta_1}}\|\phi_{\Theta_2}\|_{V^2_{\theta_2}}\|\psi_{\Theta_3}\|_{V^2_{\theta_3}} \right)^{1-\eta} \|\uppsi_{\Theta_4}\|_{V^2_{\theta_4}}.
\end{align*}
The estimate of $\mathbf I_{12}$ is very similar to $\mathbf I_{11}$. We apply H\"older inequality and then use bilinear estimates \eqref{bi-2d-lh} for the bilinear form $\varphi_{\Theta_1}^\dagger\beta\phi_{\Theta_2}$ and \eqref{bi-2d-hh} for the form $\uppsi_{\Theta_4}^\dagger\beta\psi_{\Theta_3}$ involved in the $(1-\eta)$ exponent, respectively. We again use H\"older inequality for the remaining term involved in $\eta$ exponent.
\begin{align*}
\mathbf I_{12} & \lesssim \sum_{\substack{\lambda_1\ll\lambda_0\approx\lambda_2 \\ \lambda_0\ll\lambda_3\approx\lambda_4\\\lambda_0 \ge 1}}\lambda_0^{-2}(\lambda_1\lambda_2)^\frac\eta2\lambda_3^\eta	\left( \lambda_1^{-\frac12}\lambda_2^{-\frac12}\lambda_3^{-\frac12}\|\varphi_{\Theta_1}\|_{L^4_{t,x}}\|\phi_{\Theta_2}\|_{L^4_{t,x}}\|\psi_{\Theta_3}\|_{L^4_{t,x}} \right)^\eta \\
&\times (\lambda_1\lambda_2)^{\frac12(1-\eta)}\lambda_0^{\frac12(1-\eta)}\left(\frac{\lambda_0}{\lambda_3}\right)^{\frac12(1-\eta)}\left( \|\varphi_{\Theta_1}\|_{V^2_{\theta_1}}\|\phi_{\Theta_2}\|_{V^2_{\theta_2}}\|\psi_{\Theta_3}\|_{V^2_{\theta_3}} \right)^{1-\eta} \|\uppsi_{\Theta_4}\|_{V^2_{\theta_4}} \\
& \lesssim \sum_{\substack{\lambda_1\ll\lambda_0\approx\lambda_2 \\ \lambda_0\ll\lambda_3\approx\lambda_4}}\lambda_0^{-\frac12(1-\eta)}\left(\frac{\lambda_1}{\lambda_0}\right)^\frac12\left(\frac{\lambda_0}{\lambda_3}\right)^{\frac12(1-3\eta)}\left( \lambda_1^{-\frac12}\lambda_2^{-\frac12}\lambda_3^{-\frac12}\|\varphi_{\Theta_1}\|_{L^4_{t,x}}\|\phi_{\Theta_2}\|_{L^4_{t,x}}\|\psi_{\Theta_3}\|_{L^4_{t,x}} \right)^\eta \\
&\qquad\qquad\qquad\times\left( \|\varphi_{\Theta_1}\|_{V^2_{\theta_1}}\|\phi_{\Theta_2}\|_{V^2_{\theta_2}}\|\psi_{\Theta_3}\|_{V^2_{\theta_3}} \right)^{1-\eta} \|\uppsi_{\Theta_4}\|_{V^2_{\theta_4}} \\
& \lesssim \sum_{\lambda_1\ll\lambda_2\ll\lambda_3\approx\lambda_4}\left(\frac{\lambda_2}{\lambda_3}\right)^{\frac12(1-3\eta)}\left( \lambda_1^{-\frac12}\lambda_2^{-\frac12}\lambda_3^{-\frac12}\|\varphi_{\Theta_1}\|_{L^4_{t,x}}\|\phi_{\Theta_2}\|_{L^4_{t,x}}\|\psi_{\Theta_3}\|_{L^4_{t,x}} \right)^\eta \\
&\qquad\qquad\qquad\times\left( \|\varphi_{\Theta_1}\|_{V^2_{\theta_1}}\|\phi_{\Theta_2}\|_{V^2_{\theta_2}}\|\psi_{\Theta_3}\|_{V^2_{\theta_3}} \right)^{1-\eta} \|\uppsi_{\Theta_4}\|_{V^2_{\theta_4}}.
\end{align*}
The estimate of $\mathbf I_{13}$ is identical to the estimate of $\mathbf I_{12}$. The only task is to interchange the input frequencies $\lambda_1$ and $\lambda_2$. In this manner we observe that $\mathbf I_{23}$ and $\mathbf I_{33}$ are identical to $\mathbf I_{22}$ and $\mathbf I_{32}$, respectively. We omit the details.
\subsection{Estimates of $\mathbf I_2$}
The estimate of $\mathbf I_{21}$ is also similar as $\mathbf I_{12}$. We only exchange the role of bilinear forms $\varphi_{\Theta_1}^\dagger\beta\psi_{\Theta_2}$ and $\uppsi_{\Theta_4}^\dagger\beta\psi_{\Theta_3}$.
\begin{align*}
\mathbf I_{21} & \lesssim 	\sum_{\substack{\lambda_0\ll\lambda_1\approx\lambda_2\\ \lambda_3\ll\lambda_0\approx\lambda_4}}\lambda_0^{-2}\lambda_1^\eta (\lambda_3\lambda_4)^\frac\eta2\left( \lambda_1^{-\frac12}\lambda_2^{-\frac12}\lambda_3^{-\frac12}\|\varphi_{\Theta_1}\|_{L^4_{t,x}}\|\phi_{\Theta_2}\|_{L^4_{t,x}}\|\psi_{\Theta_3}\|_{L^4_{t,x}} \right)^\eta\\
&\qquad\times \lambda_0^{\frac12(1-\eta)}\left(\frac{\lambda_0}{\lambda_1}\right)^{\frac12(1-\eta)}(\lambda_3\lambda_4)^{\frac12(1-\eta)}\left( \|\varphi_{\lambda_1}\|_{V^2_{\theta_1}}\|\phi_{\Theta_2}\|_{V^2_{\theta_2}}\|\psi_{\Theta_3}\|_{V^2_{\theta_3}} \right)^{1-\eta} \|\uppsi_{\Theta_4}\|_{V^2_{\theta_4}} \\
& \lesssim \sum_{\substack{\lambda_0\ll\lambda_1\approx\lambda_2\\ \lambda_3\ll\lambda_0\approx\lambda_4}}\lambda_0^{-1-\eta}\lambda_1^{-\frac12(1-3\eta)}(\lambda_3\lambda_4)^\frac12\left( \lambda_1^{-\frac12}\lambda_2^{-\frac12}\lambda_3^{-\frac12}\|\varphi_{\Theta_1}\|_{L^4_{t,x}}\|\phi_{\Theta_2}\|_{L^4_{t,x}}\|\psi_{\Theta_3}\|_{L^4_{t,x}} \right)^\eta \\
&\qquad\qquad\qquad \times \left( \|\varphi_{\Theta_1}\|_{V^2_{\theta_1}}\|\phi_{\Theta_2}\|_{V^2_{\theta_2}}\|\psi_{\Theta_3}\|_{V^2_{\theta_3}} \right)^{1-\eta} \|\uppsi_{\Theta_4}\|_{V^2_{\theta_4}} \\
& \lesssim \sum_{\lambda_3\ll\lambda_4\ll\lambda_1\approx\lambda_2}\lambda_1^{-\frac14}\left(\frac{\lambda_4}{\lambda_1}\right)^\eta\left(\frac{\lambda_3}{\lambda_4}\right)^\frac12\left( \lambda_1^{-\frac12}\lambda_2^{-\frac12}\lambda_3^{-\frac12}\|\varphi_{\Theta_1}\|_{L^4_{t,x}}\|\phi_{\Theta_2}\|_{L^4_{t,x}}\|\psi_{\Theta_3}\|_{L^4_{t,x}} \right)^\eta \\
&\qquad\qquad\qquad \times \left( \|\varphi_{\Theta_1}\|_{V^2_{\theta_1}}\|\phi_{\Theta_2}\|_{V^2_{\theta_2}}\|\psi_{\Theta_3}\|_{V^2_{\theta_3}} \right)^{1-\eta} \|\uppsi_{\Theta_4}\|_{V^2_{\theta_4}}.
\end{align*}
Now we use bilinear estimates \eqref{bi-2d-lh} and get
\begin{align*}
\mathbf I_{22} & \lesssim \sum_{\substack{\lambda_1\ll\lambda_0\approx\lambda_2\\ \lambda_3\ll\lambda_0\approx\lambda_4}}\lambda_0^{-2}(\lambda_1\lambda_2\lambda_3\lambda_4)^\frac12	\left( \lambda_1^{-\frac12}\lambda_2^{-\frac12}\lambda_3^{-\frac12}\|\varphi_{\Theta_1}\|_{L^4_{t,x}}\|\phi_{\Theta_2}\|_{L^4_{t,x}}\|\psi_{\Theta_3}\|_{L^4_{t,x}} \right)^\eta \\
&\qquad\qquad\qquad \times \left( \|\varphi_{\Theta_1}\|_{V^2_{\theta_1}}\|\phi_{\Theta_2}\|_{V^2_{\theta_2}}\|\psi_{\Theta_3}\|_{V^2_{\theta_3}} \right)^{1-\eta} \|\uppsi_{\Theta_4}\|_{V^2_{\theta_4}} \\
& \lesssim \sum_{\lambda_1,\lambda_3\ll\lambda_2\approx\lambda_4}\left(\frac{\lambda_1\lambda_3}{\lambda_2\lambda_4}\right)^\frac12\left( \lambda_1^{-\frac12}\lambda_2^{-\frac12}\lambda_3^{-\frac12}\|\varphi_{\Theta_1}\|_{L^4_{t,x}}\|\phi_{\Theta_2}\|_{L^4_{t,x}}\|\psi_{\Theta_3}\|_{L^4_{t,x}} \right)^\eta \\
&\qquad\qquad\qquad \times \left( \|\varphi_{\Theta_1}\|_{V^2_{\theta_1}}\|\phi_{\Theta_2}\|_{V^2_{\theta_2}}\|\psi_{\Theta_3}\|_{V^2_{\theta_3}} \right)^{1-\eta} \|\uppsi_{\Theta_4}\|_{V^2_{\theta_4}}.
\end{align*}
\subsection{Estimates of $\mathbf I_3$}
The estimate of $\mathbf I_{31}$ is similar to $\mathbf I_{21}$. The only difference is to exchange the role of $\psi_{\Theta_3}$ and $\uppsi_{\Theta_4}$. Indeed, we have
\begin{align*}
\mathbf I_{31} & \lesssim \sum_{\substack{\lambda_0\ll\lambda_1\approx\lambda_2 \\ \lambda_4\ll \lambda_0\approx\lambda_3}}\lambda_0^{-2}\lambda_1^\eta \lambda_0^{\frac12(1-\eta)}\left(\frac{\lambda_0}{\lambda_1}\right)^{\frac12(1-\eta)} (\lambda_3\lambda_4)^\frac12	 \left( \lambda_1^{-\frac12}\lambda_2^{-\frac12}\lambda_3^{-\frac12}\|\varphi_{\Theta_1}\|_{L^4_{t,x}}\|\phi_{\Theta_2}\|_{L^4_{t,x}}\|\psi_{\Theta_3}\|_{L^4_{t,x}} \right)^\eta \\
&\qquad\qquad\qquad \times \left( \|\varphi_{\Theta_1}\|_{V^2_{\theta_1}}\|\phi_{\Theta_2}\|_{V^2_{\theta_2}}\|\psi_{\Theta_3}\|_{V^2_{\theta_3}} \right)^{1-\eta} \|\uppsi_{\Theta_4}\|_{V^2_{\theta_4}}.
\end{align*}
The simple use of bilinear estimates \eqref{bi-2d-lh} yields
\begin{align*}
\mathbf I_{32} & \lesssim \sum_{\substack{\lambda_1\ll\lambda_0\approx\lambda_2 \\ \lambda_4\ll \lambda_0\approx\lambda_3}}	\lambda_0^{-2}(\lambda_1\lambda_2\lambda_3\lambda_4)^\frac12	\left( \lambda_1^{-\frac12}\lambda_2^{-\frac12}\lambda_3^{-\frac12}\|\varphi_{\Theta_1}\|_{L^4_{t,x}}\|\phi_{\Theta_2}\|_{L^4_{t,x}}\|\psi_{\Theta_3}\|_{L^4_{t,x}} \right)^\eta \\
&\qquad\qquad\qquad \times \left( \|\varphi_{\Theta_1}\|_{V^2_{\theta_1}}\|\phi_{\Theta_2}\|_{V^2_{\theta_2}}\|\psi_{\Theta_3}\|_{V^2_{\theta_3}} \right)^{1-\eta} \|\uppsi_{\Theta_4}\|_{V^2_{\theta_4}} \\
& \lesssim \sum_{\lambda_1,\lambda_4\ll\lambda_2\approx\lambda_3}\left(\frac{\lambda_1\lambda_4}{\lambda_2\lambda_3}\right)^\frac12\left( \lambda_1^{-\frac12}\lambda_2^{-\frac12}\lambda_3^{-\frac12}\|\varphi_{\Theta_1}\|_{L^4_{t,x}}\|\phi_{\Theta_2}\|_{L^4_{t,x}}\|\psi_{\Theta_3}\|_{L^4_{t,x}} \right)^\eta \\
&\qquad\qquad\qquad \times \left( \|\varphi_{\Theta_1}\|_{V^2_{\theta_1}}\|\phi_{\Theta_2}\|_{V^2_{\theta_2}}\|\psi_{\Theta_3}\|_{V^2_{\theta_3}} \right)^{1-\eta} \|\uppsi_{\Theta_4}\|_{V^2_{\theta_4}}.
\end{align*}
Finally, by the estimates of $\mathbf I_j$, $j=1,2,3$ we conclude that
\begin{align}
\begin{aligned}
\mathfrak I_{\lambda\gg h}&\lesssim \sum_{\lambda_4\ge1}\bigg( \sum_{\lambda_1,\lambda_2,\lambda_3\ge1}\left(\frac{\lambda_{\rm med}}{\lambda_{\max}}\right)^\delta \left( \lambda_1^{-\frac12}\lambda_2^{-\frac12}\lambda_3^{-\frac12}\|\varphi_{\Theta_1}\|_{L^4_{t,x}}\|\phi_{\Theta_2}\|_{L^4_{t,x}}\|\psi_{\Theta_3}\|_{L^4_{t,x}} \right)^\eta \\
&\qquad\qquad\qquad\qquad\qquad\qquad\left( \|\varphi_{\Theta_1}\|_{V^2_{\theta_1}}\|\phi_{\Theta_2}\|_{V^2_{\theta_2}}\|\psi_{\Theta_3}\|_{V^2_{\theta_3}} \right)^{1-\eta}  \bigg)^2.	
\end{aligned}
\end{align}
Thus we have
\begin{align*}
	\|\mathcal I_{\theta}[V_b*(\varphi^\dagger\beta\phi)\beta\psi](t_0; \cdot)\|_{F^{0,0}(I)} & \le C (\|\varphi\|_{\mathbf D^{-\frac12, 0}(I)}\|\phi\|_{\mathbf D^{-\frac12, 0}(I)}\|\psi\|_{\mathbf D^{-\frac12, 0}(I)})^\eta \\
	&\qquad\qquad \times (\|\varphi\|_{F^{0,0}(I)}\|\phi\|_{F^{0,0}(I)}\|\psi\|_{F^{0,0}(I)})^{1-\eta},
\end{align*}	
which is the desired estimate.
%%%%%%%%%%%%%%%%%%%%%%%%%%%%%%%%%%%%%%%%%%%%%%%%%%%%%%%%%%%%%%%%%%%%%%%%%%%%%%%%%%%%%%%%%%%%%%%%%%%%%%%%%%%%%%%%%%%%%%%%%%%%%%%%%%%%%%%%%%%%%
\section{Majorana condition: Proof of Theorem \ref{majorana-gwp}}\label{sec-maj}
We define the \textit{charge conjugation operator} $C^z_\theta$ by
$$
C^z_\theta\psi = \theta z\gamma^2\psi^*,
$$
where $z = e^{i\omega}$ for some $\omega \in \mathbb R$ and $\theta \in \{+, -\}$.
We also define the projection operator $P_\theta^z$ by
$$
P^z_\theta\psi = \frac12\left(\psi+C^z_\theta\psi\right).
$$
Then we get 
\[
P_+^z + P_-^z = I.
\]
\begin{prop}
Let $z=e^{i\omega}$ for any $\omega\in\mathbb R$ and $\theta\in\{+,-\}$. The operator $P^z_\theta$ satisfies the following:
\begin{align}\label{eq:proj-z}
(P^z_\theta)^2 = P^z_\theta, \quad P^z_\theta P^z_{-\theta} = 0.	
\end{align}
\end{prop}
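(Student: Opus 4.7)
The heart of the argument is the single algebraic identity
$C^z_\theta C^z_{\theta'} = \theta\theta'\, I$ for all $\theta,\theta' \in \{+,-\}$, from which both conclusions in \eqref{eq:proj-z} follow by direct expansion. So my first step is to establish this identity. Three ingredients are needed: (i) $C^z_\theta$ is \emph{anti-linear}, since $C^z_\theta(a\psi) = \theta z\gamma^2(a\psi)^* = \bar a\,\theta z\gamma^2\psi^* = \bar a\, C^z_\theta\psi$; (ii) $|z|^2 = 1$ because $z = e^{i\omega}$; and (iii) $\gamma^2\,\overline{\gamma^2} = I_{\tilde d}$, which is equivalent to the combination $(\gamma^2)^2 = -I_{\tilde d}$ (from the Clifford relation $\{\gamma^\mu,\gamma^\nu\} = 2\eta^{\mu\nu}I$, or directly from the matrix forms given in the introduction) together with $\overline{\gamma^2} = -\gamma^2$ (since $\sigma^2$ is purely imaginary off the diagonal). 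Combining these,
\begin{align*}
C^z_\theta\bigl(C^z_{\theta'}\psi\bigr)
= \theta z\gamma^2\bigl(\theta' z\gamma^2\psi^*\bigr)^*
= \theta\theta'\,z\bar z\,\gamma^2\,\overline{\gamma^2}\,\psi
= \theta\theta'\,\psi,
\end{align*}
which is the claimed identity. In particular $(C^z_\theta)^2 = I$ and $C^z_\theta C^z_{-\theta} = -I$.

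With that in hand, the verification of $(P^z_\theta)^2 = P^z_\theta$ is a direct computation. Since $P^z_\theta$ is $\mathbb R$-linear (the factor $\tfrac12$ is real and $C^z_\theta$ is anti-linear, but anti-linearity respects real scalars), one computes
\begin{align*}
(P^z_\theta)^2\psi
&= \tfrac12 P^z_\theta\psi + \tfrac12 P^z_\theta(C^z_\theta\psi)
= \tfrac12 P^z_\theta\psi + \tfrac14\bigl(C^z_\theta\psi + (C^z_\theta)^2\psi\bigr) \\
&= \tfrac12 P^z_\theta\psi + \tfrac14\bigl(C^z_\theta\psi + \psi\bigr)
= \tfrac12 P^z_\theta\psi + \tfrac12 P^z_\theta\psi
= P^z_\theta\psi.
\end{align*}

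For $P^z_\theta P^z_{-\theta} = 0$, the same expansion together with $C^z_\theta C^z_{-\theta} = -I$ gives
\begin{align*}
P^z_\theta\bigl(C^z_{-\theta}\psi\bigr)
= \tfrac12\bigl(C^z_{-\theta}\psi + C^z_\theta C^z_{-\theta}\psi\bigr)
= \tfrac12\bigl(C^z_{-\theta}\psi - \psi\bigr)
= -\tfrac12\bigl(\psi + C^z_\theta\psi\bigr)
= -P^z_\theta\psi,
\end{align*}
where in the next-to-last step I used $C^z_{-\theta} = -C^z_\theta$. Therefore $P^z_\theta P^z_{-\theta}\psi = \tfrac12 P^z_\theta\psi + \tfrac12 P^z_\theta(C^z_{-\theta}\psi) = 0$.

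The only subtlety is keeping track of the anti-linearity of $C^z_\theta$ when pulling scalars through; fortunately every scalar appearing above ($\tfrac12$, $\theta$, $\theta'$) is real, so the anti-linearity is invisible in the final formulas. The main conceptual point is the Clifford-algebraic fact $\gamma^2\overline{\gamma^2} = I$, which is what makes the charge conjugation involutive on each $\theta$-sector and anti-involutive across sectors.
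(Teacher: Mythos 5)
Your proof is correct and follows essentially the same route as the paper: both rest on the single matrix identity $\gamma^2\overline{\gamma^2}=I_{\tilde d}$ (equivalently, $(C^z_\theta)^2=I$), which the paper establishes in the form $P^z_\theta C^z_\theta\psi=P^z_\theta\psi$ while you package it as $C^z_\theta C^z_{\theta'}=\theta\theta'\,I$. The only cosmetic difference is that the paper deduces $P^z_\theta P^z_{-\theta}=0$ from $P^z_++P^z_-=I$ together with idempotency, whereas you compute it directly using $C^z_{-\theta}=-C^z_\theta$; both are immediate once the key identity is in hand.
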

\begin{proof}
An simple computation gives $P^z_+ + P^z_{-}$ is the identity operator. One can observe that
\begin{align*}
P^z_\theta C^z_\theta\psi &= P^z_\theta(\theta z\gamma^2\psi^*)  = \frac12(\theta z\gamma^2\psi^*+\theta z\gamma^2\theta z^*(\gamma^2)^*\psi)  = \frac12(\theta z\gamma^2\psi^*+\psi) = P^z_\theta \psi. 	
\end{align*}
Here we used $(\gamma^2)^\dagger = -\gamma^2$ and $\gamma^2\times(-\gamma^2) = \mathbb I_{\tilde d}$. Then we see that
\begin{align*}
P^z_\theta P^z_\theta\psi & = \frac12 P^z_\theta(\psi+C^z_\theta\psi) = \frac12(P^z_\theta\psi+P^z_\theta C^z_\theta\psi)  = \frac12(P^z_\theta\psi + P^z_\theta\psi) = P^z_\theta\psi,	
\end{align*}
and hence we conclude that $(P^z_\theta)^2\psi=P^z_\theta\psi$. This implies that $P^z_\theta P^z_{-\theta} = 0$, since
\begin{align*}
P^z_\theta P^z_{-\theta}\psi &= P^z_\theta(I-P^z_\theta)\psi = P^z_\theta\psi - (P^z_\theta)^2\psi  = P^z_\theta\psi-P^z_\theta\psi = 0.
\end{align*}
This finishes the proof of \eqref{eq:proj-z}.
\end{proof}
Given a spinor field $\psi:\mathbb R^{1+d}\rightarrow\mathbb C^{\tilde d}$, we have the decomposition
$$
\psi = \sum_{\theta \in \{+, -\}}P^z_\theta \psi.
$$
The following proposition is readily obtained. However it presents the significant observation on the study of large data well-posedness via the Majorana condition.
\begin{prop}
For any spinor field $\psi:\mathbb R^{1+d}\rightarrow \mathbb C^{\tilde d}$, recall that we define $\overline\psi = \psi^\dagger\gamma^0$. Then we have the following identity:	$$
\overline{P^z_\theta\psi}P^z_\theta\psi=0
$$
\end{prop}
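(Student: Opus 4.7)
The plan is to expand $\overline{P^z_\theta\psi}\,P^z_\theta\psi$ into four bilinear forms in $\psi, \psi^*$ and show that two of them cancel with each other while the remaining two vanish individually, using only the pointwise algebraic properties of the gamma matrices.

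First I would compute $\overline{P^z_\theta\psi}$ explicitly. Writing $P^z_\theta\psi = \tfrac12(\psi + \theta z\gamma^2\psi^*)$ and using $(\gamma^2)^\dagger = -\gamma^2$, one gets
\[
\overline{P^z_\theta\psi} = (P^z_\theta\psi)^\dagger\gamma^0 = \tfrac12\bigl(\psi^\dagger - \theta\bar z\,\psi^T\gamma^2\bigr)\gamma^0.
\]
Multiplying this with $P^z_\theta\psi = \tfrac12(\psi + \theta z\gamma^2\psi^*)$ and using $\gamma^2\gamma^0\gamma^2 = \gamma^0$ (which follows from $\{\gamma^0,\gamma^2\}=0$ and $(\gamma^2)^2 = -\mathbb I$) gives
\[
4\,\overline{P^z_\theta\psi}\,P^z_\theta\psi = \psi^\dagger\gamma^0\psi \;+\; \theta z\,\psi^\dagger\gamma^0\gamma^2\psi^* \;-\; \theta\bar z\,\psi^T\gamma^2\gamma^0\psi \;-\; \psi^T\gamma^0\psi^*.
\]

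The key step is then to exploit the fact that each term on the right is a $1\times1$ matrix (a scalar), hence equals its own transpose. From the explicit matrix representations given in the introduction one checks the symmetry relations $(\gamma^0)^T = \gamma^0$ and $(\gamma^2)^T = \gamma^2$ (in both the $\tilde d=2$ and $\tilde d=4$ cases, since $\sigma^1, \sigma^3$ are symmetric and $\sigma^2$ is antisymmetric, making the block $\begin{smallmatrix}0 & \sigma^2\\ -\sigma^2 & 0\end{smallmatrix}$ symmetric, and likewise for $-i\sigma^1$). Transposing the first and fourth terms and using $(\psi^\dagger)^T = \psi^*$, one obtains
\[
\psi^T\gamma^0\psi^* = (\psi^T\gamma^0\psi^*)^T = \psi^\dagger(\gamma^0)^T\psi = \psi^\dagger\gamma^0\psi,
\]
so the first and fourth terms cancel. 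For the cross terms, the same transpose trick together with the anticommutation $\gamma^0\gamma^2 = -\gamma^2\gamma^0$ gives
\[
\psi^T\gamma^2\gamma^0\psi = \psi^T(\gamma^0)^T(\gamma^2)^T\psi = \psi^T\gamma^0\gamma^2\psi = -\psi^T\gamma^2\gamma^0\psi,
\]
forcing this scalar to be zero, and an analogous computation shows $\psi^\dagger\gamma^0\gamma^2\psi^* = 0$.

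Putting the four cancellations together yields $\overline{P^z_\theta\psi}\,P^z_\theta\psi = 0$, as claimed. The only delicate point—the ``main obstacle''—is the careful bookkeeping of transposes of products of row/column spinors and matrices, and in particular verifying the symmetry $(\gamma^2)^T = \gamma^2$ in both dimensions from the explicit definitions; once these identifications are made, the cancellations are purely algebraic and independent of any analytic input.
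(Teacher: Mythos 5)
Your proof is correct, and it takes a genuinely more direct route than the paper's. Both you and the paper expand $4\,\overline{P^z_\theta\psi}\,P^z_\theta\psi$ into the same four terms and cancel the diagonal pair $\psi^\dagger\gamma^0\psi - \psi^T\gamma^0\psi^*$ (you via the symmetry $(\gamma^0)^T=\gamma^0$ and the scalar-equals-its-transpose trick; the paper via the reality of $\overline\psi\psi$ using $(\gamma^0)^*=\gamma^0$ — the same observation in dual form). The divergence is in the cross terms: the paper groups them as a complex conjugate pair, concludes that their sum is $\tfrac{i}{2}\operatorname{Im}(\theta z\,\overline\psi\gamma^2\psi^*)$ (hence purely imaginary), and then invokes the \emph{a priori} reality of $\overline{P^z_\theta\psi}\,P^z_\theta\psi$ to force it to vanish; you instead observe that $\gamma^0\gamma^2$ is an antisymmetric matrix (from $(\gamma^0)^T=\gamma^0$, $(\gamma^2)^T=\gamma^2$, and anticommutation), so each cross term $\psi^T\gamma^2\gamma^0\psi$ and $\psi^\dagger\gamma^0\gamma^2\psi^*$ is a quadratic form in a commuting vector against an antisymmetric matrix and vanishes identically. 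Your argument thus establishes the stronger pointwise fact that each cross term is zero, without any appeal to an overall reality/imaginarity dichotomy, and is in that sense cleaner and more elementary. Both approaches require the same stock of gamma-matrix facts ($(\gamma^2)^\dagger=-\gamma^2$, $(\gamma^2)^2=-\mathbb I$, anticommutation), plus either the symmetry $(\gamma^2)^T=\gamma^2,(\gamma^0)^T=\gamma^0$ (yours) or the realness of $\gamma^0$ (the paper's) — facts you correctly verify from the explicit matrices in both $\tilde d=2$ and $\tilde d=4$.
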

%Furthermore, we have $\overline{P^z_\theta\psi}P^z_\theta\psi=0$, where $\overline\psi=\psi^\dagger\gamma^0$.
\begin{proof}
To see this, we first note that $\overline{P^z_\theta\psi}P^z_\theta\psi$ must be real-valued. We write
\begin{align*}
\overline{P^z_\theta\psi}P^z_\theta\psi&= \frac14(\psi^\dagger+\theta z^*\psi^T(-\gamma^2))\gamma^0(\psi+\theta z\gamma^2\psi^*),	
\end{align*}
where $\psi^T$ is the transpose of spinor field $\psi$. Then we obtain
\begin{align*}
	\overline{P^z_\theta\psi}P^z_\theta\psi&= \frac14\left(\overline\psi\psi+\theta z\psi^\dagger\gamma^0\gamma^2\psi^*-\theta z^*\psi^T\gamma^2\gamma^0\psi+\psi^T(-\gamma^2)\gamma^0\gamma^2\psi^*\right).
\end{align*}
Here, we observe that
\begin{align*}
\psi^T(-\gamma^2)\gamma^0\gamma^2\psi^* &= \psi^T\gamma^0(\gamma^2)^2\psi^* = -\psi^T\gamma^0\psi^* = -(\psi^\dagger\gamma^0\psi)^* = -(\overline\psi\psi)^* = -\overline\psi\psi.	
\end{align*}
Similarly, we see that
\begin{align*}
-\theta z^*\psi^T\gamma^2\gamma^0\psi &= \theta z^*\psi^T\gamma^0\gamma^2\psi = -(\theta z\psi^\dagger\gamma^0\gamma^2\psi^*)^*.	
\end{align*}
Combining these identities, we deduce that
\begin{align*}
	\overline{P^z_\theta\psi}P^z_\theta\psi &= \frac i2\textrm{Im}(\theta z\overline\psi\gamma^2\psi),
\end{align*}
which should be zero, since the LHS is purely real, whereas the RHS is purely imaginary.
\end{proof}
Now we consider the initial value problems for the Dirac equations
\begin{align}\label{eq:c-dirac}
\left\{
\begin{array}{l}
	-i\gamma^\mu\partial_\mu\psi+\psi = [V_b*(\psi^\dagger \beta\psi)]\psi, \\
	\psi|_{t=0}=\psi_0.
\end{array}
\right.	
\end{align}
Recall that we have put the mass parameter $M=1$.
We shall study the time evolution property of solutions to the equations for large data by exploiting the Majorana condition. The first step is to consider the system of cubic Dirac equations instead of the above equation which presents
\begin{align}\label{psi-decomp}
\begin{aligned}
-i\gamma^\mu\partial_\mu\varphi+\varphi = V_b*(\overline{P^z_\theta\varphi}P^z_{-\theta}\phi+\overline{P^z_{-\theta}\phi}P^z_\theta\varphi)\varphi, \\
-i\gamma^\mu\partial_\mu\phi+\phi = V_b*(\overline{P^z_\theta\varphi}P^z_{-\theta}\phi+\overline{P^z_{-\theta}\phi}P^z_\theta\varphi)\phi.
\end{aligned}
\end{align}
for sufficiently smooth $\varphi, \phi$ with initial data
\begin{align}\label{decomp-data}
\varphi|_{t=0} = P^z_\theta\psi_0,\quad \phi|_{t=0} = P^z_{-\theta}\psi_0.	
\end{align}
\noindent The aim of this section is to prove the following.
\begin{thm}\label{majorana-gwp1}
Let $z = e^{i\omega}, \omega \in \mathbb R$. Let $\sigma > 0$ for $d = 3$ and $\sigma = 0$ for $d = 2$. Then there exists $0 < \epsilon < 1$ such that for any ${\mathsf A} \ge 1$ and any $0 < \mathtt a \le  \epsilon {\mathsf A}^{-1}$, if the initial data satisfy
$$
\|P^z_+\psi_0\|_{L^{2,\sigma}(\mathbb R^d)} \le \mathtt a,\ \|P^z_{-}\psi_0\|_{L^{2,\sigma}(\mathbb R^d)}\le {\mathsf A},
$$
then the equation \eqref{dirac} is globally well-posed. To be precise, the Cauchy problem of \eqref{psi-decomp} is globally well-posed in the sense that
\begin{align}
P^z_+\psi,P^z_{-}\psi \in C(\mathbb R;L^{2,\sigma}(\mathbb R^d))	.
\end{align}
Furthermore, there exist $\varphi_0^{\pm},\phi_{0}^{\pm}\in L^{2,\sigma}$ such that
\begin{align*}
\lim_{t\rightarrow\pm\infty}\|P^z_+\psi(t) - e^{\theta it\Lambda}\varphi_{0}^{\pm}\|_{L^{2,\sigma}(\mathbb R^d)} = 0, \\
\lim_{t\rightarrow\pm\infty}\|P^z_{-}\psi(t) - e^{\theta it\Lambda}\phi_{0}^{\pm}\|_{L^{2,\sigma}(\mathbb R^d)} = 0.
\end{align*}
\end{thm}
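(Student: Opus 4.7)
The plan is to solve the coupled system \eqref{psi-decomp}-\eqref{decomp-data} by Picard iteration with a weighted norm, exploiting the Majorana identity $\overline{P^z_\theta\psi}P^z_\theta\psi=0$ to force every cubic nonlinearity to contain at least one factor of the small component $\varphi$. First I would verify the reduction: since (i) the free Dirac operator commutes with $P^z_\theta$ by charge-conjugation invariance, (ii) the Majorana identity collapses $\overline\psi\psi$ with $\psi=P^z_\theta\varphi+P^z_{-\theta}\phi$ to the cross expression $\overline{P^z_\theta\varphi}P^z_{-\theta}\phi+\overline{P^z_{-\theta}\phi}P^z_\theta\varphi$, and (iii) $P^z_\theta$ is bounded on $L^{2,\sigma}$ (with norm $\le 1$) and on $F^{0,\sigma}$ (via the observation $\psi^*\in V^2_{-\theta}$ whenever $\psi\in V^2_\theta$), any solution pair of \eqref{psi-decomp} with data \eqref{decomp-data} yields, through $\psi:=P^z_\theta\varphi+P^z_{-\theta}\phi$, a genuine solution of \eqref{eq:c-dirac}.

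Writing $\varphi_\ell$ and $\phi_\ell$ for the free Dirac evolutions of $P^z_\theta\psi_0$ and $P^z_{-\theta}\psi_0$, the linear bound \eqref{lin-est} gives $\|\varphi_\ell\|_{F^{0,\sigma}}\le C_\ast\mathtt a$ and $\|\phi_\ell\|_{F^{0,\sigma}}\le C_\ast\mathsf A$. I then iterate on
\begin{align*}
\mathcal X := \bigl\{(\varphi,\phi) : \|\varphi-\varphi_\ell\|_{F^{0,\sigma}}\le\mathtt a,\ \|\phi-\phi_\ell\|_{F^{0,\sigma}}\le\mathsf A\bigr\},
\end{align*}
via the Duhamel maps $\Phi=(\Phi_1,\Phi_2)$ built from the right-hand sides of \eqref{psi-decomp}. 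A direct appeal to Proposition \ref{main-tri-est}, together with the Strichartz bound $\|\cdot\|_{\mathbf D^{-\frac12,\sigma}}\lesssim\|\cdot\|_{F^{0,\sigma}}$, produces the stability estimates
\begin{align*}
\|\Phi_1(\varphi,\phi)-\varphi_\ell\|_{F^{0,\sigma}}\lesssim\|\varphi\|_{F^{0,\sigma}}^2\|\phi\|_{F^{0,\sigma}}\lesssim\mathtt a^2\mathsf A,\\
\|\Phi_2(\varphi,\phi)-\phi_\ell\|_{F^{0,\sigma}}\lesssim\|\varphi\|_{F^{0,\sigma}}\|\phi\|_{F^{0,\sigma}}^2\lesssim\mathtt a\mathsf A^2,
\end{align*}
which close on $\mathcal X$ once $\epsilon$ is taken sufficiently small in the hypothesis $\mathtt a\le\epsilon\mathsf A^{-1}$. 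Crucially, the Majorana identity is exactly what prevents a forbidden $\mathsf A^3$-term from appearing in the $\phi$-equation, since the source $\overline\psi\psi$ contains no pure $\overline{P^z_{-\theta}\phi}P^z_{-\theta}\phi$-contribution.

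The main obstacle is the contraction step: a naive product-metric Lipschitz bound for $\Phi_2$ in the $\varphi$-direction gives a factor of order $\|\phi\|_{F^{0,\sigma}}^2\sim\mathsf A^2$, which is not small. To circumvent this I would measure distances in the weighted metric
\begin{align*}
d\bigl((\varphi,\phi),(\varphi',\phi')\bigr) := \mathsf A\|\varphi-\varphi'\|_{F^{0,\sigma}}+\mathtt a\|\phi-\phi'\|_{F^{0,\sigma}}.
\end{align*}
Expanding $\Phi_i(\varphi,\phi)-\Phi_i(\varphi',\phi')$ as a telescoping sum of trilinear differences and applying Proposition \ref{main-tri-est} to each, a short bookkeeping shows that every entry of the resulting $2\times 2$ Lipschitz matrix is controlled by $\mathtt a\mathsf A$ after rescaling; the offending $\mathsf A^2$-factor is exactly cancelled by the weight ratio $\mathtt a/\mathsf A$ on the $\phi$-side. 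Thus $\Phi$ is a strict contraction on $(\mathcal X,d)$ provided $\mathtt a\mathsf A\le\epsilon$ is small enough, and Banach's fixed-point theorem produces a unique global $(\varphi,\phi)\in F^{0,\sigma}\times F^{0,\sigma}$, hence a global $L^{2,\sigma}$-solution $\psi=P^z_\theta\varphi+P^z_{-\theta}\phi$ of \eqref{eq:c-dirac}.

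Scattering of $P^z_+\psi$ and $P^z_-\psi$ in $L^{2,\sigma}$ as $t\to\pm\infty$ then follows from $(\varphi,\phi)\in F^{0,\sigma}\times F^{0,\sigma}$: by the $V^2_\theta$-structure underlying $F^{0,\sigma}$ and Lemma \ref{v-scatter}, each dyadic piece of $\varphi$ and $\phi$ admits a free Dirac asymptotic profile in $L^{2,\sigma}$, and applying the bounded projectors $P^z_\pm$ to the resulting limit identities gives the stated scattering profiles $\varphi_0^\pm,\phi_0^\pm\in L^{2,\sigma}$.
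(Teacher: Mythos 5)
Your proposal follows essentially the same route as the paper: both set up a contraction for the system \eqref{psi-decomp} in $F^{0,\sigma}\times F^{0,\sigma}$ with a weighted metric chosen so that the Majorana cancellation guarantees at least one $\mathtt a$-small factor in every cubic piece, and both invoke the trilinear estimate of Proposition \ref{main-tri-est} with the dispersive norm absorbed into $F^{0,\sigma}$. Your metric $\mathsf A\|\varphi-\varphi'\|_{F^{0,\sigma}}+\mathtt a\|\phi-\phi'\|_{F^{0,\sigma}}$ is exactly $\mathtt a\mathsf A$ times the paper's $\mathtt a^{-1}\|\varphi-\varphi'\|_{F^{0,\sigma}}+\mathsf A^{-1}\|\phi-\phi'\|_{F^{0,\sigma}}$, so the two contraction arguments (each closing with a Lipschitz constant of order $\mathtt a\mathsf A$) are the same up to a global rescaling, and the mapping sets differ only in whether one centers at the free evolution or at $0$.
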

\begin{proof}[Proof of Theorem \ref{majorana-gwp1}]
We recall the Banach space $F^{0,\sigma}\subset C(\mathbb R;L^{2,\sigma}(\mathbb R^{d}))$. If $\psi\in F^{0,\sigma}$ is a solution to the cubic Dirac equations \eqref{eq:c-dirac},
%$$
%-i\gamma^\mu\partial_\mu\psi+M\psi = [V_b*(\overline{\psi_1}\psi_2)]\psi_3,
%$$
then by the multilinear estimates given by  putting $\eta=0$ in the multilinear estimates Theorem \ref{main-tri-est}, we have
\begin{align}\label{psi-bdd-maj}
\|\psi\|_{F^{0,\sigma}} &\le \|\psi_0\|_{L^{2,\sigma}}+C\|\psi_1\|_{F^{0,\sigma}}\|\psi_2\|_{F^{0,\sigma}}\|\psi_3\|_{F^{0,\sigma}}.
\end{align}
 We consider the set
 $$
 X = \{ (\varphi,\phi)\in F^{0,\sigma}\times F^{0,\sigma} : \|\varphi\|_{F^{0,\sigma}}\le 2\|\varphi(0)\|_{L^{2,\sigma}},\ \|\phi\|_{F^{0,\sigma}}\le2\|\phi(0)\|_{L^{2,\sigma}} \}
 $$
 and for ${\mathsf A}, \mathtt a > 0$, we define the norm
 $$
 \|(\varphi,\phi)\|_{X} = \mathtt a^{-1}\|\varphi\|_{F^{0,\sigma}} + {\mathsf A}^{-1}\|\phi\|_{F^{0,\sigma}}.
 $$
 Then $X$ is a complete metric space with the metric corresponding to the norm. Now we let $\Phi=(\Phi_1,\Phi_2)$ be the inhomogeneous solution map for \eqref{psi-decomp} given by the Duhamel principle. Then the bound \eqref{psi-bdd-maj} and the definition of the set $X$ give
 \begin{align}
 \begin{aligned}
 \|\Phi_1(\varphi,\phi)\|_{F^{0,\sigma}} & \le \|\varphi(0)\|_{L^{2,\sigma}}+C \|\varphi\|_{F^{0,\sigma}}^2\|\phi\|_{F^{0,\sigma}} \\
 & \le \|\varphi(0)\|_{L^{2,\sigma}}	+16C \|\varphi(0)\|_{L^{2,\sigma}}^2\|\phi(0)\|_{L^{2,\sigma}}, \\
 & \le 	(1+16C{\mathsf A}\mathtt a)\|\varphi(0)\|_{L^{2,\sigma}},
 \end{aligned}
 \end{align}
and also
\begin{align}
\begin{aligned}
	\|\Phi_2(\varphi,\phi)\|_{F^{0,\sigma}} & \le \|\phi(0)\|_{L^{2,\sigma}}+C \|\phi\|_{F^{0,\sigma}}^2\|\varphi\|_{F^{0,\sigma}} \\
 & \le \|\phi(0)\|_{L^{2,\sigma}}	+16C \|\phi(0)\|_{L^{2,\sigma}}^2\|\varphi(0)\|_{L^{2,\sigma}}, \\
 & \le 	(1+16C{\mathsf A}\mathtt a)\|\phi(0)\|_{L^{2,\sigma}}.
\end{aligned}	
\end{align}
Then we put $\mathtt a \le \dfrac{1}{16C{\mathsf A}}$ and deduce that the map $\Phi$ is the flow map from $X$ into $X$. By multilinear estimates lead us that the map $\Phi$ is a contraction on the set $X$. Indeed, suppose that we have $(\varphi_1,\phi_1),\,(\varphi_2,\phi_2)\in X$. Then we estimate
$$
\|\Phi_1(\varphi_1,\phi_1)-\Phi_1(\varphi_2,\phi_2)\|_{F^{0, \sigma}} \le 16C{\mathsf A}\mathtt a \|\varphi_1-\varphi_2\|_{F^{0,\sigma}} + 8C\mathtt a^2\|\phi_1-\phi_2\|_{F^{0,\sigma}}
$$
and
$$
\|\Phi_2(\varphi_1,\phi_1)-\Phi_2(\varphi_2,\phi_2)\|_{F^{0,\sigma}} \le 16C{\mathsf A}\mathtt a \|\phi_1-\phi_2\|_{F^{0,\sigma}} + 8C{\mathsf A}^{2}\|\varphi_1-\varphi_2\|_{F^{0,\sigma}}.
$$
In consequence we obtain
\begin{align*}
\|\Phi(\varphi_1,\phi_1)-\Phi(\varphi_2,\phi_2)\|_X &\le 24C{\mathsf A}\|\varphi_1 -\varphi_2\|_{F^{0,\sigma}} + 24C\mathtt a\|\phi_1-\phi_2\|_{F^{0,\sigma}}\\
& = 24C{\mathsf A}\mathtt a\|(\varphi_1,\phi_1)-(\varphi_2,\phi_2)\|_X.
\end{align*}
Thus by choosing $\epsilon = \dfrac{1}{48C}$, the solution map $\Phi$ is a contraction on $X$ for any $\mathtt a \le \epsilon{\mathsf A}^{-1}$.
\end{proof}

%We also have the similar result for the two spatial dimensional setting.
%\begin{thm}
%	Let $z\in\mathbb C$, $|z|=1$. There exists $0<c<1$ such that for any $A\ge1$ and any $\epsilon\le cA^{-1}$, if the initial datum satisfy
%$$
%\|P^z_+\psi_0\|_{L^{2}(\mathbb R^2)} \le \epsilon,\ \|P^z_{-}\psi_0\|_{L^{2}(\mathbb R^2)}\le A,
%$$
%then the equation \eqref{dirac} is globally well-posed and scatters to free solutions as $t\rightarrow\pm\infty$. This is, the Cauchy problems of \eqref{psi-decomp} is globally well-posed in the sense that
%\begin{align}
%P^z_+\psi,P^z_{-}\psi \in C(\mathbb R;L^{2}(\mathbb R^2))	.
%\end{align}
%Furthermore, there exist $\varphi_0^\pm,\phi_{0}^\pm\in L^{2}$ such that
%\begin{align*}
%\lim_{t\rightarrow\pm\infty}\|P^z_+\psi(t) - e^{\theta it\Lambda}\varphi_{0}^\pm\|_{L^{2}(\mathbb R^2)} = 0, \\
%\lim_{t\rightarrow\pm\infty}\|P^z_{-}\psi(t) - e^{\theta it\Lambda}\phi_{0}^\pm\|_{L^{2}(\mathbb R^2)} = 0.
%\end{align*}
%\end{thm}
%\noindent The proof is very similar as the previous argument and we omit the details.
%%%%%%%%%%%%%%%%%%%%%%%%%%%%%%%%%%%%%%%%%%%%%%%%%%%%%%%%%%%%%%%%%%%%%%%%%%%%%%%%%%%%%%%%%%%%%%%%%%%%%%%%%%%%%%%%%%%%%%%%%%%%%%%%%%%%%%%%%%%%%%%%%%%%%%%%%%%%%%%%%%%%%%%%%%%%%%%%%%%%%%%%%%%%%%%%%%%%%%%%%%%%%%%%%%%%%%%%%%%%
\section*{Acknowledgements}
This work was supported in part by NRF-2021R1I1A3A04035040(Republic of Korea).
%%%%%%%%%%%%%%%%%%%%%%%%%%%%%%%%%%%%%%%%%%%%%%%%%%%%%%%%%%%%%%%%%%%%%%%%%%%%%%%%%%%%%%%%%%%%%%%%%%%%%%%%%%%%%%%%%%%%%%%%%%%%%%%%%%%%%%%%%%%%%
%%%%%%%%%%%%%%%%%%%%%%%%%%%%%%%%%%%%%%%%%%%%%%%%%%%%%%%%%%%%%%%%%%%%%%%%%%%%%%%%%%%%%%%%%%%%%%%%%%%%%%%%%%%%%%%%%%%%%%%%%%%%%%%%%%%%%%%%%%%%%

%%%%%%%%%%%%%%%%%%%%%%%%%%%%%%%%%%%%%%%%%%%%%%%%%%%%%%%%%%%%%%%%%%%%%%%%%%%%%%%%%%%%%%%%%%%%%%%%%%%%%%%%%%%%%%%%%%%%%%%%%%%%%%%%%%%%%%%%%%%%%%%%%%%%%%%%%%%%%%%%%%%%%%%%%%%%%%%%%%%%%%%%%%%%%%%%%%%%%%%%%%%%%%%%%%%%%%%%%%%%%%%%%%%%%%%%%%%%%%%%%%%%%%%%%%%%%%%%%%%%%%%%%%%%%%%%%%%%%%%%%%%%%%%%%%%%%%%%%%%%%%%%%%%%

\end{document}